\newtheorem{theorem}{Theorem}[section]
\newtheorem{lemma}[theorem]{Lemma}
\newtheorem{proposition}[theorem]{Proposition}
\newtheorem{corollary}[theorem]{Corollary}
\theoremstyle{definition}
\newtheorem{definition}[theorem]{Definition}
\newtheorem{example}[theorem]{Example}
\theoremstyle{remark}
\newtheorem{remark}[theorem]{Remark}
\numberwithin{equation}{section}
\def\N{\bf{N}}
\def\C{{\mathbb C}}
\def\R{{\mathbb R}}
\def\Z{{\mathbb Z}}
\def\H{{\mathbb H}}
\def\P{\mathbb P_{\mathbb C}}
\def\PSL{\text{{PSL}}}
\def\PU{\text{{PU}}}
\begin{document}

\title{  \sc{Projective Cyclic Groups in Higher Dimensions } }

\author{Angel Cano}
\address{ UCIM, Av. Universidad s/n. Col. Lomas de Chamilpa,
C.P. 62210, Cuernavaca, Morelos, M\'exico.}
\email{angelcano@im.unam.mx}

\author{Luis Loeza}
\address{IIT UACJ, Av. del Charro 610 Norte,  Partido Romero, C.P. 32310, Ciudad Ju\'arez, Chihuahua, M\'exico}
\email{  luis.loeza@uacj.mx}

\author{Alejandro Ucan-Puc}
\address{UCIM, Av. Universidad s/n. Col. Lomas de Chamilpa,
C.P. 62210, Cuernavaca, Morelos, M\'exico.}
\email{manuel.ucan@im.unam.mx}

  \thanks{Supported by grants of the PAPPIT's  project IN101816 and CONACYT's 272169.} 

\subjclass{Primary 37F99, 32Q, 32M Secondary 30F40, 20H10, 57M60, 53C}



\begin{abstract}
In this article we provide  a  classification of the projective transformations  in $PSL(n+1,\C)$ considered as automorphisms of the complex projective space $\P^n$. Our classification is an interplay between  algebra and  dynamics.
 Just as in the case of isometries of $CAT(0)$-spaces, this is given by means of  three types of transformations, namely:   elliptic, parabolic and loxodromic. 
We    describe the  dynamics in each case, more precisely we determine  the corresponding  Kulkarni's limit set, the equicontinuity region, minimal sets, the discontinuity region and   maximal regions where the corresponding cyclic group acts properly discontinuously. We also  provide, in each case, some equivalent ways to classify the projective transformations.
\end{abstract}

\maketitle

\section*{Introduction}

Discrete groups of projective transformations arise as monodromy groups of ordinary differential equations (see \cite{iwa}),  associated to Ricatti 's foliations  (see \cite{santos}) or as the monodromy groups of the so called orbifold uniformizing differential equations (see \cite{yos}). However outside the groups coming from complex hyperbolic geometry,  little is known about their dynamics, see \cite{CSN}. Yet,  as in the one dimensional case, one might expect   interesting results. In this paper we deal with the basic problem of classifying the projective transformations and understand their dynamic.\\

When we look at  elements in  $\PU(1,n)$, one has that they preserve a ball, then, as in the one dimensional case, this fact   enables us to classify the transformations in  $\PU(1,n)$  by means  of their fixed points  and their position in the closed complex ball. More precisely, an element is  said to be:       elliptic if it has a fixed point  in the  complex ball,  parabolic if it  has a unique fixed point in the boundary of the complex ball  and finally the element is said to be   loxodromic if it has exactly  two fixed points in the boundary of the complex ball. However, when we deal with  automorphisms of $\P^n$, this type of classification  makes no sense, since  in general there is not an invariant ball.  
So, to extend the previous classification to $PSL(n+1,\C)$, we must think  dynamically, more precisely we must look into  the local behavior around  the  fixed points. The following definition captures this point of view.

\begin{definition} \label{d:tipos}
Let $\gamma\in PSL(n +1,\C)$ be a projective transformation, then 
\begin{enumerate}
\item The element $\gamma$ is called elliptic if there is a  lift $\widetilde\gamma\in SL(n+1,\C)$ of $\gamma$, which  is diagonalizable and each of its eigenvalues is unitary.
\item The element $\gamma$ is called  loxodromic is there is a  lift $\widetilde\gamma\in SL(n+1,\C)$ of $\gamma$ having  non-unitary  eigenvalue.
\item The element $\gamma$ is parabolic, if there is a 
 lift $\widetilde\gamma\in SL(n+1,\C)$ of $\gamma$ which is   non diagonalizable and has only unitary eigenvalues.
\end{enumerate}
\end{definition}

Clearly this definition exhausts all the possibilities and agrees with the standard classification in the one and    two dimensional setting,  as well as, in the case of  transformations in
 $PU(1,n)$, $n\geq 1$, see \cite{CSN, goldman}.  There is also another classification of the projective transformations of $PSL(3,\C)$ in terms of the fixed set given in \cite{santos}, which is closely related to our  classification but  properly talking does not agree with the one exposed here.

 As in the two  dimensional case, see \cite{CSN} ,  we want to know the relations between  different notions of limit set  (Kulkarni limit set, complement of the discontinuity set, complement of the  equicontinuity set, minimal sets, complements of  maximal sets where the action  is properly discontinuously) for complex Kleinain groups in all dimensions, and  a nice starting point towards the solution of this  problem  is  providing  a description of the after told  mentioned sets   for  cyclic groups. In this article we show:

\begin{theorem}[The discontinuity set] \label{t:l2}
Let $\gamma\in PSL(n+1,\C)$ be a projective transformation, then:
\begin{enumerate}
\item The element $\gamma$ is elliptic if and only if the set of accumulation  points of orbits of points in  $\P^n$ under the action of $\langle \gamma \rangle $  either is  empty or the whole space 
$\P^n$, depending on whether    $\gamma$ has finite order or not. 

\item The element $\gamma$ is parabolic if and only if the set of accumulation  points of orbits of points in  $\P^n$ under the action of $\langle \gamma \rangle $ is a single proper projective subspace {\rm (see Theorem \ref{l:lila} for a detailed description)}.

\item The element $\gamma$ is loxodromic if and only if the set of accumulation  points of orbits of points in  $\P^n$ under the action of $\langle \gamma \rangle $ is 
 a finite disjoint union of at least two projective subspaces {\rm (see Theorem \ref{l:lila} for a detailed description)}. 
 \end{enumerate}
\end{theorem}

\begin{theorem}[The equicontinuity set] \label{t:e2}
Let $\gamma\in PSL(n +1,\C)$ be a projective transformation, then:
\begin{enumerate}
\item The element $\gamma$ is elliptic if an only if
 the equicontinuity set of $\langle \gamma \rangle $ is the whole space $\P^n$.

\item The element $\gamma$ is parabolic if and only if
the  complement of the equicontinuity set of $\langle \gamma \rangle $  is a single  projective subspace $\mathcal{L}$ {\rm (see Theorem \ref{t:equi} for a precise description)}.

\item The element $\gamma$ is loxodromic if and only if
 the complement  of the  equicontinuity set of $\langle \gamma \rangle $  can be described as  the union of two proper distinct  projective subspaces $\mathcal{L}_1,\,\mathcal{L}_2$  of $\P^n$ {\rm ( see Theorem \ref{t:equi} for a precise description)}.

\end{enumerate}
\end{theorem}

The  Kurkarni's discontinuity set was introduced in \cite{kulkarni} as a way to construct  regions where a  group acts properly discontinuously   (see the formal definitions bellow  and see \cite{CSN, kulkarni} for a detailed discussion), respect this notion we show.

\begin{theorem}[The Kulkarni's limit set] \label{t:p2}
Let $\gamma\in PSL(n+1,\C)$ be a projective transformation, then:

\begin{enumerate}
\item The element $\gamma$ is elliptic if an only if   the  Kulkarni's limit set of  $\langle \gamma \rangle $ is either empty or the whole space 
$\P^n$, depending on whether    $\gamma$ has finite order or not.  In this case the Kulkarni discontinuity region of $\langle \gamma \rangle $ is the largest open set on which $\langle \gamma \rangle $ acts properly discontinuously.

\item The element $\gamma$ is parabolic if and only if
the  Kulkarni's limit  set of $\langle \gamma \rangle $   is a projective subspace $\mathfrak{L}$ {\rm (see Theorem \ref{l:par22} for a precise description)}.  In this case the Kulkarni discontinuity region of $\langle \gamma \rangle $ might not be is the largest open set on which $\langle \gamma \rangle $ acts properly discontinuously, see Corollary \ref{c:parmax}.

\item The element $\gamma$ is loxodromic if and only if the Kulkarni's limit set 
 of $\langle \gamma \rangle $  can be described as  the union of two proper distinct  projective subspaces $\mathfrak{L}_1,\,\mathfrak{L}_2$  of $\P^n$ {\rm ( see Theorem \ref{t:kpar2} for a precise description)}.  In this case the Kulkarni's discontinuity region of $\langle \gamma \rangle $ might not be is the largest open set on which $\langle \gamma \rangle $ acts properly discontinuously, see example \ref{e:loxmax}.
\end{enumerate}
\end{theorem}

Theorems \ref{t:e2} and \ref{t:p2}   looks-alike, indeed for $n\leq 2$ it is true that the equicontinuity set and the Kulkarni's discontinuity region agree, see \cite{CSN}, however as we well see in Examples \ref{e:ekdp1}, \ref{e:ekdp2}, \ref{e:ekdl1} and \ref{e:ekdl2}, it happens that in the higher dimensional case the discontinuity set and the  Kurkarni's  discontinuity region  might or might  not agree.

From the one and two dimensional settings we know that Definition \ref{d:tipos} can be given in terms of certain foliations (see \cite{CSN}), in  fact  this  provides a simple and useful  way  to  describe  the global dynamics of cyclic  groups. In this article we propose a generalization of such foliations,  
 but before we state the analogous results,  let us introduce some notation. Let $\C^{k,l}$, $0<k\leq l$, be a copy of $\C^{k+l}$ equipped  with the hermitian form: 
\[
\prec  U,V\succ _{k,l}=-\sum_{j=1}^k u_j\bar{v}_j+\sum_{j=k+1}^{k+l} u_j\bar{v}_j. 
\]
Here  $U(k,l)$ will denote  the subgroup of  $GL(k+l,\C)$ preserving $\prec , \succ _{k,l} $ and $PU(k,l)$ the respective projectivization. We define the $(k,l)$-ball, in symbols   $\H^{k,l}_{\C}$, as the projectivization of the set:

\[
\left \{
(z_1,\ldots, z_{k+l})
\in \C^{k+l}: \sum_{j=1}^k  \vert z_j\vert^2 >\sum_{j=k+1}^{k+l} \vert z_j\vert^2  
\right \}.
\]
 In a similar way the    $(k,l)$-sphere  is the projectivization  of the  set  
\[
\left \{
(z_1,\ldots, z_{k+l})
\in \C^{k+l}: \sum_{j=1}^k  \vert z_j\vert^2 =\sum_{j=k+1}^{k+l} \vert z_j\vert^2  
\right \}.
\]
Observe  the case $k=1$ correspond to  the context of the complex hyperbolic geometry and  in this case the (1,l)-sphere is  homeomorphic to the  $2l-1$-sphere and the $(1,l)$-ball is a model of the complex hyperbolic geometry.
With this in mind we finally state our results.

\begin{theorem} \label{t:e1}
Let $\gamma\in PSL(n +1,\C)$ be a projective transformation, then  $\gamma$ is elliptic if and only if  $\gamma$ preserves, up to conjugation, a foliation of $\C^n\setminus \{0\}$ by concentric $(1,n)$-spheres.
\end{theorem}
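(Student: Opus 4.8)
The plan is to reduce the statement to a short computation with Hermitian forms, after identifying $\C^n\setminus\{0\}$ with $\P^n$ minus a point and a hyperplane disjoint from that point. I read ``$\gamma$ preserves the foliation'' as ``$\gamma$ leaves every leaf invariant'': under the weaker reading ``$\gamma$ permutes the leaves'' the statement is false, since the loxodromic element whose lift is $\mathrm{diag}(\lambda,1,\dots,1)$ with $|\lambda|\neq1$ permutes the leaves $S_c$ introduced below.

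\emph{Elliptic implies invariant foliation.} If $\gamma$ is elliptic then a lift $\widetilde\gamma$ is diagonalizable with unitary eigenvalues, so after conjugating we may take $\widetilde\gamma=\mathrm{diag}(e^{i\theta_0},\dots,e^{i\theta_n})$. For $c\in(0,\infty)$ put $S_c=\{[z_0:\cdots:z_n]\in\P^n:\ |z_0|^2=c(|z_1|^2+\cdots+|z_n|^2)\}$. Each $S_c$ is a $(1,n)$-sphere, being the image of the standard one under a diagonal rescaling of the first coordinate, renormalized to determinant $1$; the $S_c$ share the common center $e_0=[1:0:\cdots:0]\in\H^{1,n}_\C$, and no $S_c$ meets $e_0$ or the hyperplane $H=\{z_0=0\}$. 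In the affine chart $w=(z_1,\dots,z_n)/z_0$, which identifies $\P^n\setminus(\{e_0\}\cup H)$ with $\C^n\setminus\{0\}$, the leaf $S_c$ is the Euclidean sphere $\{\|w\|=c^{-1/2}\}$, so $\{S_c\}_{c>0}$ is a foliation of $\C^n\setminus\{0\}$ by concentric $(1,n)$-spheres; and the diagonal unitary matrix $\widetilde\gamma$ fixes each $S_c$ because it preserves both $|z_0|^2$ and $|z_1|^2+\cdots+|z_n|^2$.

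\emph{Invariant foliation implies elliptic.} After conjugating we may assume $\gamma$ fixes each leaf of the standard family $\{S_c\}$. Fix two distinct leaves $S_{c_1},S_{c_2}$. Since $S_c$ is the null set of the nondegenerate Hermitian form of signature $(1,n)$ with matrix $A_c=\mathrm{diag}(1,-c,\dots,-c)$, and a projective transformation preserving the null cone of such a form must scale it by a nonzero real constant, we obtain $\widetilde\gamma^{*}A_{c_i}\widetilde\gamma=\rho_i A_{c_i}$ with $\rho_i\in\R\setminus\{0\}$, $i=1,2$. Eliminating $\widetilde\gamma^{*}$ shows that $\widetilde\gamma$ conjugates $A_{c_1}^{-1}A_{c_2}=\mathrm{diag}(1,t,\dots,t)$, with $t=c_2/c_1>0$ and $t\neq1$, to $(\rho_2/\rho_1)$ times itself; comparing spectra forces $\rho_1=\rho_2$, and then $\widetilde\gamma$ commutes with $\mathrm{diag}(1,t,\dots,t)$, so $\widetilde\gamma=\mathrm{diag}(a,B)$ is block-diagonal with $a\in\C^{*}$ — the algebraic form of ``$\gamma$ fixes the common center and preserves the hyperplane at infinity''. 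Substituting back into $\widetilde\gamma^{*}A_{c_1}\widetilde\gamma=\rho_1A_{c_1}$ gives $B^{*}B=|a|^2I$, so $B=|a|U$ with $U\in\U(n)$; and $\det\widetilde\gamma=a\det B=1$ forces $|a|^{n+1}=1$, whence $|a|=1$, $B\in\U(n)$, and $\widetilde\gamma\in\U(n+1)$. Every other lift differs from this one by an $(n+1)$-st root of unity, so every lift is diagonalizable with unitary eigenvalues; that is, $\gamma$ is elliptic.

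The verifications that the $S_c$ are $(1,n)$-spheres and that they foliate $\C^n\setminus\{0\}$, and the normalization putting an arbitrary foliation by concentric $(1,n)$-spheres into the family $\{S_c\}$, are routine. I expect the main obstacle to be in the converse: first, the form-rigidity fact that lets ``$\gamma$ preserves a $(1,n)$-sphere'' be rewritten as the matrix identity $\widetilde\gamma^{*}A_c\widetilde\gamma=\rho_cA_c$; and then the eigenvalue bookkeeping — using two distinct leaves to force $\rho_1=\rho_2$, to recognize the common center and the hyperplane at infinity as the eigenspaces of $\widetilde\gamma$, and to push, through the determinant-one relation, both $|a|$ and the singular values of $B$ down to $1$.
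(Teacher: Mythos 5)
Your proof is correct in substance and, in the converse direction, takes a genuinely different and more self-contained route than the paper. The forward direction (diagonal unitary lift preserves the concentric family $S_c$) coincides with the paper's argument in Proposition \ref{neweliffconjofoldeli}. For the converse, the paper first deduces that the conjugated element lies in $\PU(n,1)$ and fixes the common center $[e_{n+1}]$, hence is elliptic there, and then invokes the $PU(k,l)$ classification (Proposition \ref{p:eli}, which rests on Theorem \ref{t:acc} and the Jordan form) to conclude diagonalizability with unitary eigenvalues; you instead encode leaf-preservation for two leaves as the congruences $\widetilde\gamma^{*}A_{c_i}\widetilde\gamma=\rho_iA_{c_i}$ and extract unitarity of the lift by direct matrix algebra. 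What your route buys is independence from the discreteness/Arzel\`a--Ascoli machinery of Section \ref{hyperbolic}; what it costs is that the ``form-rigidity'' fact (a projective map preserving the null cone of a nondegenerate signature-$(1,n)$ form scales the form by a real constant) carries real weight and should be proved or cited --- though the paper's own ``It follows that $f\in\PU(n,1)$'' leaves exactly the same fact implicit. Two small repairs: the step ``comparing spectra forces $\rho_1=\rho_2$'' only yields $|\rho_2/\rho_1|=1$, hence $\rho_2/\rho_1=\pm1$; to exclude $-1$ note that by Sylvester's law of inertia $\widetilde\gamma^{*}A_{c}\widetilde\gamma$ has signature $(1,n)$, so $\rho_iA_{c_i}$ does too, forcing $\rho_i>0$ when $n\geq2$ (for $n=1$ a one-line computation with $D=\mathrm{diag}(1,t)$ rules out the sign flip). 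Also, your determinant normalization should be read projectively, since a lift in $\SL(n+1,\C)$ of the conjugated element is what satisfies $\det\widetilde\gamma=1$; with that convention the conclusion $|a|=1$, $B\in\U(n)$ stands.
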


\begin{theorem} \label{t:p1}
Let $\gamma\in PSL(n+1,\C)$ be a projective transformation, then  $\gamma$ is parabolic if and only if and only if there are:

 \begin{enumerate}
 \item Natural numbers 
$0<k\leq l$ satisfying   $k+l=n+1$.
 \item A transformation  $\tau\in PSL(n+1,\C)$.
 \item      A non-empty family $\mathcal{F}$ of  $(k,l)$-spheres. 
\item  A projective  subspace $\mathcal{Z}\subset  \Bbb{P}^n_{\Bbb{C}}$.
\end{enumerate} 
satisfying
\begin{enumerate}
\item Each element  of $\mathcal{F}$ is  $\tau \gamma\tau^{-1}$-invariant.
\item The projective space $\mathcal{Z}$ is $\tau \gamma\tau^{-1}$-invariant.
\item For every pair of different elements
$T_{1},  T_{2}\in  \mathcal{F}$ one has $$\emptyset \neq \mathcal{ Z}\subset T_1\cap T_2\subset \mathcal{Z}^\bot. $$
\item  We have $ \bigcup_{T\in \mathcal{F}} T\setminus \mathcal{Z}^\bot=\P^n\setminus \mathcal{Z}^\bot$.
\item \label{c:pe} The fixed set satisfies: $Fix(\gamma)\subset \mathcal{Z}^\bot.$
\item \label{c:pl}The action of $\gamma$ restricted  to  $\mathcal{Z}^\bot $ is a given non-loxodromic. 
\end{enumerate}
\end{theorem}

Condition (\ref{c:pe}) 
 in the previous definition appear  bit artificial, but as one can see from Example \ref{e:parfix}, it is  necessary.

\begin{theorem} \label{t:l1}
Let $\gamma\in PSL(n+1,\Bbb{C})$ be a projective transformation, then  $\gamma$ is loxodromic if and only if
there is a proper   open set $W\subset \P^n$ such that  $\gamma (\overline  W)\subset  W$.
\end{theorem}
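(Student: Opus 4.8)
The plan is to prove Theorem \ref{t:l1} by establishing both implications through a careful analysis of the Jordan form of a lift $\widetilde\gamma \in SL(n+1,\C)$ of $\gamma$.

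\medskip

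\noindent\textbf{($\Rightarrow$) Loxodromic implies existence of an attracting open set.}
Suppose $\gamma$ is loxodromic, so that every lift $\widetilde\gamma$ has a non-unitary eigenvalue. Let $\lambda_1,\ldots,\lambda_s$ be the distinct moduli of the eigenvalues of $\widetilde\gamma$, ordered so that $\lambda_1 > \lambda_2 > \cdots > \lambda_s$; since $\gamma$ is loxodromic and $\det \widetilde\gamma = 1$, we must have $s \geq 2$, $\lambda_1 > 1$ and $\lambda_s < 1$. Decompose $\C^{n+1} = E_1 \oplus \cdots \oplus E_s$ into the $\widetilde\gamma$-invariant subspaces where $E_i$ is the sum of the generalized eigenspaces whose eigenvalues have modulus $\lambda_i$. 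The image $\mathcal{L}_1 = \P(E_1)$ is an attracting projective subspace: for any point $x \in \P^n$ not lying in the projectivization $\mathcal{L}_- = \P(E_2 \oplus \cdots \oplus E_s)$ of the complementary sum, the iterates $\gamma^k(x)$ converge to $\mathcal{L}_1$. The key point is that one can make this convergence \emph{uniform} on a suitable neighborhood: take $W$ to be a sufficiently small tubular neighborhood of $\mathcal{L}_1$, chosen so that its closure avoids $\mathcal{L}_-$. Using the block structure of $\widetilde\gamma$ and the gap $\lambda_1 > \lambda_2$, one checks by a direct estimate (writing points in affine coordinates adapted to the splitting $E_1 \oplus (E_2\oplus\cdots\oplus E_s)$, where $\widetilde\gamma$ acts with dominant block on the $E_1$-coordinates and with strictly smaller growth on the rest, even accounting for the polynomial factors coming from non-trivial Jordan blocks) that $\gamma(\overline W) \subset W$ for $W$ a small enough such neighborhood. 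This is the step I expect to require the most care: one must control the Jordan-block polynomial growth against the exponential eigenvalue gap, uniformly over $\overline W$, and verify that $\overline W$ genuinely misses $\mathcal{L}_-$.

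\medskip

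\noindent\textbf{($\Leftarrow$) Existence of an attracting open set implies loxodromic.}
Conversely, suppose there is a proper open $W \subset \P^n$ with $\gamma(\overline W) \subset W$. Then the nested sequence $\overline W \supset \gamma(\overline W) \supset \gamma^2(\overline W) \supset \cdots$ of compact sets has nonempty intersection $K = \bigcap_{k \geq 0} \gamma^k(\overline W)$, which is a nonempty compact $\gamma$-invariant set contained in the open set $W$ (hence $K \neq \P^n$). I will argue by ruling out the other two types. If $\gamma$ were elliptic, then by Theorem \ref{t:e1} (or directly) every orbit closure in $\P^n$ is either finite or a $(2n-1)$-dimensional torus-like set that is not contained in any proper open subset having the strict-inclusion property — more concretely, an elliptic $\gamma$ has a lift that is an isometry for a Fubini--Study-type metric, so $\gamma$ cannot strictly shrink any open set into itself (the volume, or diameter with respect to this invariant metric, would have to strictly decrease, contradicting isometry unless $W = \emptyset$). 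Similarly, if $\gamma$ were parabolic, write the lift in Jordan form with all eigenvalues unitary; then, after passing to a power if necessary, $\gamma^k$ still has all unitary eigenvalues and the orbit of any point under $\langle\gamma\rangle$ accumulates on a single proper projective subspace (Theorem \ref{t:p2} / \ref{t:l2}(3)), and the obstruction is that the "return" dynamics transverse to that subspace is unbounded in both forward and backward time — a parabolic map cannot satisfy $\gamma(\overline W)\subset W$ because its inverse $\gamma^{-1}$ is also parabolic and any point of $\partial W$ mapped inside $W$ would force, by iterating $\gamma^{-1}$, points of $W$ to escape to the limit set, yet the limit set being a single subspace and the forward/backward symmetry of parabolics prevents a strictly invariant open set. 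The cleanest formulation: for elliptic and parabolic $\gamma$, the Kulkarni limit set (by Theorem \ref{t:p2}) is either all of $\P^n$ or a single projective subspace; in the parabolic case $\gamma$ and $\gamma^{-1}$ share the same fixed-point set and the same limit subspace, which is incompatible with the attractor $K$ being disjoint from an analogous repeller, so no strictly invariant $W$ can exist. Hence $\gamma$ must be loxodromic.

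\medskip

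\noindent The main obstacle, as noted, is the uniform estimate in the forward direction: producing an explicit $W$ (a tube around the top-modulus subspace) and proving $\gamma(\overline W)\subset W$ in the presence of non-diagonalizable Jordan blocks. Everything else is a matter of invoking the dynamical descriptions already established in Theorems \ref{t:l2}, \ref{t:p2} and \ref{t:e1}, together with the elementary compactness argument for the converse.
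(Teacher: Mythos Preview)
Your forward direction has a real gap. A Fubini--Study tubular neighborhood of $\mathcal{L}_1=\P(E_1)$ need \emph{not} satisfy $\gamma(\overline W)\subset W$ after a single application of $\gamma$, regardless of how small you take it. Concretely, with
\[
\widetilde\gamma=\begin{pmatrix}2&100&0\\0&2&0\\0&0&1/4\end{pmatrix},
\]
the direction $v_1=(1,-1/50)\in E_1$ satisfies $|\widetilde\gamma_1 v_1|\approx 1/25\ll |v_1|$, so the ratio $|v_2|/|v_1|$ governing distance to $\mathcal{L}_1$ can \emph{increase} by a factor $\approx 25/4$ under $\gamma$. ``Small enough $W$'' does not help: the obstruction is linear. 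What you actually need is an \emph{adapted norm} on $\C^{n+1}$ for which $\|\widetilde\gamma_1^{-1}\|<\lambda_1^{-1}+\epsilon$ on $E_1$ and $\|\widetilde\gamma|_{E_2\oplus\cdots}\|<\lambda_2+\epsilon$; then the tube in \emph{that} norm works. Your sketch never produces this norm.

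The paper sidesteps the issue entirely by passing to the Grassmannian. If $\dim E_1=\widetilde n$, then on $\bigwedge^{\widetilde n}\C^{n+1}$ the map $\bigwedge^{\widetilde n}\widetilde\gamma$ has $e_1\wedge\cdots\wedge e_{\widetilde n}$ as an eigenvector whose eigenvalue is the \emph{unique} one of maximal modulus (Lemma~\ref{l:t2}); hence the corresponding point of $Gr(\widetilde n-1,n)\subset P(\bigwedge^{\widetilde n}\C^{n+1})$ is an attracting fixed point in the classical sense (Lemma~\ref{l:t3}), so some neighborhood $U$ in the Grassmannian satisfies $\gamma(\overline U)\subset U$. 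Lemmas~\ref{l:t4} and~\ref{l:t5} then show $W=\bigcup_{\ell\in U}\ell$ is a proper open subset of $\P^n$ with $\gamma(\overline W)\subset W$. The Pl\"ucker trick converts the troublesome invariant \emph{subspace} (with possible Jordan blocks) into a single attracting \emph{point} with simple top eigenvalue, where the adapted-norm argument is standard.

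Your converse direction is also weaker than it should be. The elliptic case is fine (an invariant volume form does the job), but your parabolic argument is not a proof: ``forward/backward symmetry'' and ``incompatible with the attractor being disjoint from a repeller'' are assertions, not arguments. The paper's route is cleaner and uniform: from $\gamma(\overline W)\subset W$ one shows (Lemma~\ref{l:t1}) that $\Lambda(\langle\gamma\rangle)$ meets both $W$ and $\P^n\setminus\overline W$ and avoids $\partial W$, hence is disconnected. But if every eigenvalue of a lift is unitary, Theorem~\ref{l:lila} gives $\Lambda(\langle\gamma\rangle)$ as a \emph{single} projective subspace, which is connected --- contradiction.
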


As corollary we get:

\begin{corollary} \label{c:cfix}
Let $\gamma\in PSL(n+1,\C)$ be a projective transformation, then 
\begin{enumerate}
\item The element $\gamma$ is loxodromic if and only if there are couple of distinct points $x,y\in Fix(\gamma):=\{z\in \P^n: \gamma z=z\}$ such that the action of $\gamma$ restricted to the complex line $\langle \langle x ,y\rangle \rangle$ is loxodromic.
\item The element $\gamma$ is parabolic  if and only if  every lift  $\widetilde \gamma\in SL(n+1,\C)$ is non-diagonalizable  and   for every 
 couple of distinct points $x,y\in Fix(\gamma)$  the action of $\gamma$ restricted to the complex line $\langle \langle x,y \rangle \rangle$ is elliptic.
\item The element $\gamma$  is elliptic  if and only if  every lift  $\widetilde \gamma\in SL(n+1,\C)$ is diagonalizable  and for every 
 couple of distinct points $x,y\in Fix(\gamma)$  the action of $\gamma$ restricted to the complex line $\langle \langle x,y\rangle \rangle$ is elliptic.
\end{enumerate}
\end{corollary}

Finally we prove the following result, concerning the existence of loxodromic elements in discrete groups.

\begin{theorem}
	Let $\Gamma\subset PSL(n+1,\C)$ be a strongly irreducible subgroup, then $\Gamma$ contains a loxodromic element.
	\end{theorem}

The paper is organized as follows: 
in Section \ref{s:recall}, we
review some general facts and introduce the notation used along the text, in Section \ref{s:equi},  
we describe the discontinuity of a group, Sections \ref{s:eli}, \ref{s:par} and \ref{s:lox} deals with 
description of the dynamic of elliptic, parabolic and loxodromic elements respectively Finally in 
 Section \ref{s:main} we write down some questions, concerning cyclic, that    where not considered in this article, 

\section{Preliminaries}  \label{s:recall}
\subsection{Projective Geometry}
The complex projective space $\mathbb {P}^n_{\mathbb {C}}$
is defined as:
$$ \mathbb {P}^{n}_{\mathbb {C}}=(\mathbb {C}^{n+1}\setminus \{0\})/\C^* \,,$$
where $\C^*$ acts by  the usual scalar multiplication.
  This is   a  compact connected  complex $n$-dimensional
manifold,  equipped with the Fubini-Study  metric $d_n$.

If $[\mbox{ }]:\mathbb {C}^{n+1}\setminus\{0\}\rightarrow
\mathbb {P}^{n}_{\mathbb {C}}$ is the quotient map, then a
non-empty set  $H\subset \mathbb {P}^n_{\mathbb {C}}$ is said to
be a projective subspace of dimension $k$  if there is a  $\mathbb {C}$-linear
subspace  $\widetilde H$ of dimension $k+1$ such that $[\widetilde
H\setminus \{0\}]=H$. The  projective subspaces of dimension
$(n-1)$ are called hyperplanes and the  complex projective subspaces of dimension 1
 are called   lines.  In this article  $e_1,\ldots, e_{n+1}$ will denote the standard basis for $\Bbb{C}^{n+1}$.\\

Given a set of points $P$   in $\mathbb{P}^{n}_{\mathbb{C}}$, we
define:
$$\langle\langle P \rangle\rangle =\bigcap\{l\subset \mathbb{P}^n_{\mathbb{C}}\mid l
\textrm{ is a projective subspace containing } P \}.$$ Clearly 
$\langle\langle P\rangle\rangle $ is a projective subspace of
$\mathbb{P}^{n}_{\mathbb{C}}$.  On the other hand the points in $P$ are  said to be in general position if for each subset $R\subset P$ with  $1\leq Card(R)\leq n+1$  we have that   $\langle\langle R \rangle\rangle$ has dimension $Card(R)-1$.

\subsection{The Grassmanian} 
Let $0\leq k< n$, then we define the Grassmanian   $Gr(k,n)$ as the space  of all  $k$-dimensional projective subspaces  of $\mathbb {P}^{n}_{\Bbb{C}}$ endowed with the Hausdorff topology. One has  that $Gr(k,n)$ is a  compact, connected complex manifold of dimension $k(n-k)$. 
A method to realize the Grassmannian  $Gr(k,n)$  as a sub variety of the projective space of the $(k+1)$-th exterior power of $ \C^{n+1}$, in symbols $P(\bigwedge^{k+1} \C^{n+1})$ is done by the so called Pl\"ucker embedding which is given by:
\[
\begin{array}{l}
\iota:Gr(k,n)\rightarrow P(\bigwedge^{k+1} \C^{n+1})\\
\iota(V)\mapsto  [v_1\wedge \cdots \wedge v_{k+1}]
\end{array}
\] 
where $\langle \langle v_1, \cdots,  v_{k+1}\rangle \rangle= V$, clearly this is a well defined $PSL(n+1,\Bbb{C})$-equivariant  embedding. Moreover, it is possible to check that the topology on $Gr(k,n)$ induced by the Fubini-Study metric on $P(\bigwedge^{k+1} \C^{n+1})$
agrees with the topology on $Gr(k,n)$. \\

\subsection{ Projective, Quasi-projective and Pseudo-projective transformations }
Consider the general linear group $ GL({n+1}, \C)$.
 It is clear  that every linear automorphism of $\C^{n+1}$ defines a
holomorphic automorphism of $\P^n$, and it is well-known that every holomorphic automorphism of $\P^n$
arises in this way. The group of projective
automorphisms of $\P^n$ is defined:
$$PSL(n+1, \mathbb {C}) \,:=\, GL({n+1}, \C)/\C^*$$
where $\C^* $ acts by the usual scalar multiplication. Then $PSL(n+1, \mathbb {C})$ is a Lie group whose
elements are called projective transformations.
We denote   by $[[\mbox{  }]]: GL(n+1, \mathbb
{C})\rightarrow PSL(n+1, \mathbb {C})$    the quotient map. Given     $ \gamma  \in PSL(n+1, \mathbb
{C})$  we  say that  $\widetilde \gamma  \in GL(n+1, \mathbb {C})$
is a  lift of $ \gamma  $ if $[[\widetilde  \gamma  ]]= \gamma  $.
 Notice that
$ PSL(n+1, \mathbb {C})$   takes
   projective
subspaces  into projective subspaces.\\

 Consider the space of linear transformations from $\C^{n+1}$ to $\C^{n+1}$ denoted by  $M(n+1,\C)$,  this is a linear complex space  of dimension  $(n+1)^2$ 
 where   $GL(n+1,\C)$ is an open dense set in $ M(n+1,\Bbb{C})$. Then
$PSL(n+1,\C)$ is an open dense set in  $QP(n+1,\C)= (M(n+1,\C)\setminus \{0 \})/\C^*$ called the space of pseudo-projective   maps , that is $QP(n+1,\C)$is a compactification, in the set theoretic sense, of $PSL(n+1,\C)$. 
Let us show how the previous compactification  can be used, set 
$\widetilde M:\mathbb {C}^{n+1}\rightarrow \mathbb {C}^{n+1}$ be a non-zero
 linear transformation   $Ker(\widetilde M)$ be its kernel and  $Ker([[\widetilde M]])$
 denote the respective projectivization,
 then  $\widetilde M$ induces a map 
  $[[\widetilde M]]:\mathbb {P}^{n}_\mathbb {C}\setminus Ker([[\widetilde M]]) \rightarrow
 \mathbb {P}^{n}_\mathbb {C}$   by:
 $$
[[\widetilde M]]([v])=[\widetilde M(v)]\,.
 $$
which is clearly well defined. The following  result provides a ``relation" between  convergence in $QP(n+1,\C)$ viewed as points in a projective space and the convergence viewed as functions.

\begin{proposition} [See \cite{CSN}]  \label{p:completes}
Let  $( \gamma_m)_{m\in \mathbb {N}}\subset PSL(n+1,\mathbb {C})$
be a sequence of distinct elements, then 
\begin{enumerate}
\item  There is a subsequence $( \tau_m)_{m\in \mathbb {N}}\subset ( \gamma_m)_{m\in \mathbb {N}}$ and  $\tau_0\in  M(n+1,\C)\setminus \{0\}$ such that $\tau_m\xymatrix{ \ar[r]_{m \rightarrow
\infty}&}  \tau_0  $ as points in $QP(n+1,\C)$.

\item If $(\tau_m)_{m\in \mathbb {N}}$ is the sequence  given by the previous part of this lemma, then 
$\tau_m  \xymatrix{ \ar[r]_{m \rightarrow
\infty}&}  \tau_0  $, as   functions,   uniformly on compact sets of 
 $\mathbb
{P}^n_\mathbb {C}\setminus Ker( \tau_0  )$.

\end{enumerate}
\end{proposition}

 As we will see below Proposition \ref{p:completes} enable us to describe in a simple way the equicontinuity region  of a given family of automorphism of $\P^n$. 
 
 
\subsection{Projective Unitary Groups} Let $0<k\leq l$ and consider  $\C^{k+l}$ equipped equipped with the
Hermitian form of signature $(k,l)$ given by:
$$
\prec u,v  \succ_{k,l}=-\sum_{j=1}^{k}u_j\overline{v}_j+\sum_{j=k+1}^{k+l}u_j\overline{v}_j \,,
$$
where $u=(u_1,\ldots,u_{k+l})$ and  $v=(v_1,\ldots,v_{k+l})$. A
vector $v$ is called negative, null and positive depending (in the
obvious way) on the value of  $\prec v,v \succ_{k,l}$; we denote the set of
negative, null or positive vectors by $N_-^{k,l},$ $N_0^{k,l}$ and $N_+^{k,l}$
respectively. We define $\H^{k,l}_\C$ as 
the image  of  $N_-^{k,l}$ in
$\P^n$ under the map $[\mbox{ }]$.\\

If we let  $U(k,l) \subset GL(n+1, \C)$  be the subgroup
consisting of the elements  that preserve the above Hermitian
form, then its projectivization $PU(k,l)=[[U(k,l)]]_{n+1}$  is a Lie  subgroup
of $ PSL(n+1, \mathbb {C})$ that we denote by $PU(k,l)$. In the case $k=1$,  the function $d_B:\H^{1}_{\C} \times \H^{l}_{\C} \rightarrow \Bbb{R}^+$ given by:
\[
d_B([v],[w])=
arccosh
\left(
\sqrt{
\frac
{\prec v,w\succ _{1,l}, \prec v,w\succ _{1,l}}
{\prec v,v\succ _{1,l}\prec w,w\succ _{1,l}}
} \,
\right )
\]
 is a metric in $\H^{l}_{\C}$ compatible with the   topology of $\H^{l}_{\C}$.  Where each element in $PU(1,l)$ acts on $\Bbb{H}^{l}_{\Bbb{C}} $ as an isometry. 
\begin{remark}
Let  $k>1$ and $l\geq k$. Consider the projective transformation induced by the matrix
 \[
M=
\left (
\begin{array}{lll}
I  & 0 & 0\\
0 & 1 & 1\\
0 & 0 & 1\\
\end{array}
\right )
\]
where $I$ is the identity matrix in $SL(k+l-2)$. Also consider the Hermitian quadric given by 
$$Q=-\sum_{j=1}^{k-1} \vert z_j\vert^2+ \sum_{j=k}^{k+l-2} \vert z_j\vert^2 +i(z_{k+l-1}\overline{z_{k+l}}-z_{k+l}\overline{z_{k+l-1}}),$$ 
clearly $Q$ has signature $(k,l)$,  $M$ preserves the $(k,l)$-ball $B$ induced by $Q$  and 
$$\langle\langle [e_1],\ldots, [e_{k-1}]\rangle \rangle \subset Fix([M])\cap B.$$  This example shows that for $k\geq 2$ not every discrete group in $PU(k,l)$ acts discontinuously on $\Bbb{H}^{k,l}_{\Bbb{C}}$,  
which, by the Arzela- Azcoli theorem, shows that we cannot find a metric in $\Bbb{H}^{k,l}_{\Bbb{C}}$ compatible with the topology  in such way that $PU(k,l)$ acts by isometries on  $\Bbb{H}^{k,l}_{\Bbb{C}}$. 
\end{remark}

\subsection{Kulkarni limit Set}
When we look at the action of a group acting on a general topological space, 
in  general there is not a natural notion of limit set, back in 1970 Kulkarni introduce a notion of limit set 
which work in very general setting,  in this subsection we present the  Kulkarni's limit set. 

\begin{definition}[see   \cite{kulkarni} ] \label{d:lim}
 Let $\Gamma\subset   PSL(n+1,\mathbb{C})$ be a subgroup. We  define
 
\begin{enumerate}
\item The set 
$\Lambda(\Gamma)$ as the closure of the set  of cluster points  of
$\Gamma z$  where  $z$ runs  over  $\mathbb{P}^n_{\mathbb{C}}$
\item  The set $L_2(\Gamma)$ as  the closure of cluster  points of $\Gamma
K$  where $K$ runs  over all  the compact sets in
$\mathbb{P}^n_{\mathbb{C}}\setminus \Lambda(\Gamma)$.
\item The  \textit{ Kulkarni's limit set } of $\Gamma$  as:  $$\Lambda_{Kul} (\Gamma) = \Lambda(\Gamma) \cup L_2(\Gamma).$$ 
\item The \textit{ Kulkarni's discontinuity region} of $\Gamma$  as:
$$\Omega_{Kul}(\Gamma) = \mathbb{P}^n_{\mathbb{C}}\setminus
\Lambda_{Kul}(\Gamma).$$
\end{enumerate}

\end{definition}

The Kulkarni's limit set has the following properties,  for a more  detailed  discussion on this topic in the 2 dimensional setting see \cite{CSN} and Corollary \ref{c:eqkul}  below.

\begin{proposition}[See \cite{CSN}] \label{p:pkg}
Let    $\Gamma$  be a complex  kleinian group. Then:

\begin{enumerate}

\item The sets\label{i:pk2}
$\Lambda_{Kul}(\Gamma),\,\Lambda(\Gamma),\,L_2(\Gamma)$
are  $\Gamma$-invariant closed sets. 

\item \label{i:pk3} The group $\Gamma$ acts properly 
  discontinuously on  $\Omega_{Kul}(\Gamma)$. 

\item \label{i:pk4} Let $\mathcal{C}\subset\mathbb{P}^n_{\mathbb{C}}$ be
a closed $\Gamma$-invariant set such that  for every compact set $K\subset
\mathbb{P}^n_{\mathbb{C}}\setminus \mathcal{C}$, the set of cluster points
of  $\Gamma K$ is contained in $\Lambda(\Gamma)\cap \mathcal{C}$, then
$\Lambda_{Kul}(\Gamma)\subset \mathcal{C}$.

\item The equicontinuity set of $\Gamma$ is contained in $\Omega_{Kul}(\Gamma)$.
\end{enumerate}
\end{proposition}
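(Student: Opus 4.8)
The plan is to verify the three assertions separately, using two facts from the preamble: that each element of $\Gamma$ acts on $\P^n$ as a homeomorphism, and that $QP(n+1,\C)$ compactifies $PSL(n+1,\C)$ so that, by Proposition~\ref{p:completes}, a convergent sequence of distinct elements converges uniformly away from the kernel of its limit. I note at the outset that such a limit must be \emph{non-invertible}: otherwise the orbits of generic points would accumulate everywhere and force $\Lambda(\Gamma)=\P^n$, contrary to $\Omega_{Kul}(\Gamma)\neq\emptyset$. \textbf{Closedness and invariance} are the easy part. The three sets are closures by Definition~\ref{d:lim}, hence closed, and $\Lambda_{Kul}(\Gamma)$ is a union of two closed sets. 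For invariance fix $g\in\Gamma$; since $g\Gamma=\Gamma$ each orbit $\Gamma z$ is setwise fixed by $g$, so the homeomorphism $g$ permutes the cluster points of $\Gamma z$, and after taking the union over $z$ and the closure we get $g\,\Lambda(\Gamma)=\Lambda(\Gamma)$. As $\Lambda(\Gamma)$ is then $\Gamma$-invariant, $g$ maps compact subsets of $\P^n\setminus\Lambda(\Gamma)$ to compact subsets of the same kind and carries cluster points of $\Gamma K$ to cluster points of $\Gamma(gK)$, so the same reasoning yields $g\,L_2(\Gamma)=L_2(\Gamma)$; hence $\Lambda_{Kul}(\Gamma)$ and its complement $\Omega_{Kul}(\Gamma)$ are $\Gamma$-invariant.

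\textbf{Proper discontinuity.} I use the criterion that the action on an open set $\Omega$ is properly discontinuous exactly when $\{g\in\Gamma:\,gK\cap K\neq\emptyset\}$ is finite for every compact $K\subset\Omega$. If this failed for some compact $K\subset\Omega_{Kul}(\Gamma)$ there would be distinct $g_m\in\Gamma$ and $z_m\in K$ with $g_mz_m\in K$; passing to a subsequence, $g_mz_m\to y\in K$ and $z_m\to x\in K$. Since $K\subset\Omega_{Kul}(\Gamma)\subset\P^n\setminus\Lambda(\Gamma)$ is compact and avoids $\Lambda(\Gamma)$, the limit $y$ is a cluster point of $\Gamma K$, so by the very definition of $L_2(\Gamma)$ it lies in $L_2(\Gamma)\subset\Lambda_{Kul}(\Gamma)$, contradicting $y\in\Omega_{Kul}(\Gamma)$. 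The one degenerate possibility is that the $g_mz_m$ are eventually constant; applying $g_m^{-1}$ then exhibits the $z_m$ inside the single orbit $\Gamma y$, so either $x=\lim z_m\in\Lambda(\Gamma)$ when the $z_m$ vary, or $x$ is fixed by infinitely many distinct $g_m$. This last, infinite-stabiliser case is the sole genuine subtlety, resolved by showing such a point still lies in $\Lambda(\Gamma)$; in every case $x\in\Omega_{Kul}(\Gamma)$ is contradicted.

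\textbf{Minimality.} Let $\mathcal C\subset\P^n$ be closed and $\Gamma$-invariant with the stated property. First $\Lambda(\Gamma)\subset\mathcal C$: if $p$ is a cluster point of an orbit $\Gamma z$, then either $z\in\mathcal C$, whence $\Gamma z\subset\mathcal C$ and $p\in\overline{\mathcal C}=\mathcal C$, or $z\notin\mathcal C$, whence $\{z\}$ is compact in $\P^n\setminus\mathcal C$ and the hypothesis gives $p\in\Lambda(\Gamma)\cap\mathcal C\subset\mathcal C$; taking closures yields $\Lambda(\Gamma)\subset\mathcal C$. It remains to place every cluster point $q$ of $\Gamma K$, with $K$ compact in $\P^n\setminus\Lambda(\Gamma)$, inside $\mathcal C$. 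Suppose $q\notin\mathcal C$ and write $q=\lim g_mz_m$ with $z_m\in K$, the $g_mz_m$ distinct, and $z_m\to x\in K$. Choose a compact neighbourhood $Q$ of $q$ contained in the open set $\P^n\setminus\mathcal C$; for large $m$ one has $g_mz_m\in Q$, so $z_m=g_m^{-1}(g_mz_m)\in\Gamma Q$, and therefore $x$ is a cluster point of $\Gamma Q$. By the hypothesis applied to the compact set $Q\subset\P^n\setminus\mathcal C$ we get $x\in\Lambda(\Gamma)\cap\mathcal C\subset\Lambda(\Gamma)$, contradicting $x\in K\subset\P^n\setminus\Lambda(\Gamma)$. (If instead the $z_m$ are eventually constant, then $q$ is a cluster point of the single orbit $\Gamma x$, hence already in $\Lambda(\Gamma)\subset\mathcal C$.) Thus $L_2(\Gamma)\subset\mathcal C$ and finally $\Lambda_{Kul}(\Gamma)=\Lambda(\Gamma)\cup L_2(\Gamma)\subset\mathcal C$.

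I expect the only real obstacle to be the degenerate case in the proof of proper discontinuity: a point $x\in\Omega_{Kul}(\Gamma)$ fixed by infinitely many distinct elements of $\Gamma$. Ruling this out amounts to proving that such an $x$ is an accumulation point of orbits, hence lies in $\Lambda(\Gamma)$. This is exactly where the compactness of $QP(n+1,\C)$ and Proposition~\ref{p:completes} enter: passing to a pseudo-projective limit of the stabilising sequence and examining its singular-value behaviour, one finds that orbits of suitable points on $\gamma$-invariant subspaces through $x$ accumulate back at $x$, so $x\in\Lambda(\Gamma)$ as required. Everything else reduces, as above, to the defining properties of $\Lambda(\Gamma)$ and $L_2(\Gamma)$ together with the equivariance of the orbit--cluster-point construction.
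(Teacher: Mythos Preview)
The paper does not prove this proposition at all: it is stated with the attribution ``See \cite{navarrete}'' and is used as a black box throughout, so there is no argument in the paper to compare yours against.

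On the substance of your proof: parts (\ref{i:pk2}) and (\ref{i:pk4}) are fine. The weak spot is your treatment of (\ref{i:pk3}). The ``infinite-stabiliser'' case you single out as the sole genuine subtlety is in fact a phantom, created by reading ``cluster points of $\Gamma K$'' set-theoretically (as accumulation points of the point set $\bigcup_{g}gK$) rather than in Kulkarni's intended dynamical sense: $p$ is a cluster point of $\Gamma K$ when every neighbourhood of $p$ meets $gK$ for infinitely many distinct $g\in\Gamma$. With that reading, your very first step already finishes the argument: if $g_m\in\Gamma$ are distinct, $z_m\in K$ and $g_mz_m\to y\in K$, then each neighbourhood of $y$ meets $g_mK$ for all large $m$, so $y\in L_2(\Gamma)\subset\Lambda_{Kul}(\Gamma)$, contradicting $y\in K\subset\Omega_{Kul}(\Gamma)$. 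No case split is needed, and in particular the constant-sequence and infinite-stabiliser scenarios are subsumed (if $h_mx=x$ for infinitely many distinct $h_m$, take $K=\{x\}$ and observe $x\in L_2(\Gamma)$ directly).

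Your final paragraph, invoking pseudo-projective limits and ``singular-value behaviour'' to force $x\in\Lambda(\Gamma)$, is therefore unnecessary; it is also too vague to stand on its own as written, and would not obviously succeed for non-discrete $\Gamma$, which the definition of complex Kleinian group does not exclude. I would drop it and instead make the dynamical reading of $L_2$ explicit at the outset; the proof of (\ref{i:pk3}) then collapses to the three lines above, matching Kulkarni's original argument.
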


As we will in this article Kulkarni's limit set is a nice starting point  in order to study  the different notion of limit
 sets.
  
\subsection{The $\Lambda$ set for cyclic groups}
In the case of cyclic groups there is  available a full description of the set $\Lambda$, 
in order to present such result the following definition is necessary.

\begin{definition} Let $\gamma\in SL(n+1,\C)$ be a linear transformation transformation,  
$V_1,\ldots ,  V_k\subset \mathbb{C}^{n+1}$  linear subspaces; $k\in \mathbb{N}$;
$\gamma_i:V_i\rightarrow V_i$, $1\leq i\leq k$, be $\C$-linear transformations and
$r_1,\ldots,r_k\in \mathbb{R}$. The set  $(k,\{V_i\}_{i=1}^k,\{\gamma_i\}_{i=1}^k,\{r_i\}_{i=1}^k)$
 will be called a unitary decomposition for $\gamma$ if it is  verified that:
 
\begin{enumerate}
\item $\bigoplus_{j=1}^k V_j=\mathbb{C}^{n+1}$.
\item For each  $1\leq i\leq k$,  the eigenvalues of $\gamma_i$  are unitary complex numbers.
\item $0<r_1<r_2<,\ldots,<r_k$.
\item $\bigoplus_{j=1}^kr_j\gamma_j= \gamma$.
\end{enumerate}
\end{definition}

Now is trivial to provide a description  of $\Lambda$ for the cyclic case, see \cite{CSN}. 

\begin{theorem} \label{l:lila} 
 Let $\gamma\in  PSL(n+1, \mathbb{C})$ be a projective transformation with infinite order. If $\widetilde\gamma \in SL(n+1,\C)$ is a lift of $\gamma$ and $(k, \{V_j\}_{j=1}^k, \{\gamma_j\}_{j=1}^k,\{r_j\}_{j=1}^k)$  is a unitary decomposition for $\widetilde\gamma$, then:
$$ \Lambda(\langle \gamma \rangle )=
\bigcup_{j=1}^k \langle \langle [\left\{x\in V_j: x \textrm{ is an eigenvector of } \gamma_j\right \} ] \rangle \rangle .$$
\end{theorem}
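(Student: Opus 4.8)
The plan is to analyze the orbit $\langle\gamma\rangle z = \{[\widetilde\gamma^m v] : m\in\Z\}$ for a fixed lift $v$ of $z$ and identify its cluster set, then take the union over all $z$. The key device is Lemma \ref{c:uni}: write $\widetilde\gamma = \bigoplus_j r_j\gamma_j$ according to the given unitary decomposition, and split $v = \sum_j v_j$ with $v_j \in V_j$. The projective action ignores scalars, so we may renormalize $\widetilde\gamma^m v$ by any nonzero scalar we like. Let $J = \{j : v_j \neq 0\}$ and let $j_0 \in J$ be an index maximizing $r_j$ over $J$; among all such maximizing indices, pick the one maximizing $k(v_j,\gamma_j)$ (in the notation of Lemma \ref{c:uni}), call this maximal value $k_0$ and set $J_0$ for the set of indices achieving both maxima. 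Dividing $\widetilde\gamma^m v$ by $r_{j_0}^m\binom{m}{k_0}$, the components $v_j$ with $j\notin J_0$ contribute terms that go to $0$ (either because $r_j/r_{j_0}<1$, invoking Lemma \ref{l:radio} on $V_j$, or because $r_j = r_{j_0}$ but $k(v_j,\gamma_j)<k_0$ so the binomial ratio $\binom{m}{k(v_j,\gamma_j)}/\binom{m}{k_0}\to 0$); and for $j\in J_0$ the surviving term is $\lambda_j^m \binom{m}{k_0}^{-1}\gamma_j^m(v_j)$, whose cluster points lie in $\langle\langle\{\text{eigenvectors of }\gamma_j\}\rangle\rangle\setminus\{0\}$ by Lemma \ref{c:uni}. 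Since at least one such $j_0$ genuinely survives, the cluster set of the orbit of $z$ is nonempty and contained in $[\bigcup_{j\in J_0}\langle\langle\{\text{eigenvectors of }\gamma_j\}\rangle\rangle]$, which is contained in the claimed right-hand side; this gives $\Lambda(\langle\gamma\rangle)\subseteq \bigcup_{j=1}^k[\langle\langle\{\text{eigenvectors of }\gamma_j\}\rangle\rangle\setminus\{0\}]$.

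For the reverse inclusion, fix $j$ and let $w\in V$ with $[w]\in\langle\langle\{\text{eigenvectors of }\gamma_j\}\rangle\rangle$. I would exhibit a point $z$ whose forward or backward orbit accumulates at $[w]$. The natural candidate is to take $v = v_j + \varepsilon v_{j'}$ where $v_j\in V_j$ is chosen so that, under the normalization above, $\binom{m}{k_0}^{-1}\gamma_j^m(v_j)$ has $[w]$ among its cluster points (the explicit formula at the end of the proof of Lemma \ref{c:uni} shows which vectors arise: by choosing the coefficients $\alpha_{j,i}$ of $v_j$ in the Jordan basis, one reaches any vector in the span of the chosen eigenvector(s), and running $m$ along suitable subsequences where $\lambda_j^m$ converges to prescribed unit complex numbers produces the whole projective subspace generated by the eigenvectors of $\gamma_j$). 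One must ensure that $j$ is actually the dominant index for this $v$: if $r_j$ is not the largest $r$, one uses the backward orbit $\widetilde\gamma^{-m}$ instead, for which $r_j^{-1}$ becomes dominant among the relevant indices; more carefully, since we get to choose $v$ supported only on $V_j$ (take $v=v_j$, all other components zero), $j$ is automatically the unique surviving index and no competition occurs, so the forward orbit already works when $\gamma_j$ is non-trivial, and when $\gamma_j$ is semisimple one still sweeps out the eigenvector span via convergence of $\lambda_j^m$ along subsequences. Taking the closure and the union over $j$ yields the claimed equality.

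The main obstacle I anticipate is the reverse inclusion, specifically verifying that for a single Jordan block $\gamma_j$ the cluster set of $\{\binom{m}{k_0}^{-1}\gamma_j^m(v_j)\}_m$, as $v_j$ ranges over $V_j$ and $m$ over subsequences, fills out \emph{all} of $\langle\langle\{\text{eigenvectors of }\gamma_j\}\rangle\rangle\setminus\{0\}$ rather than just a proper subset. The displayed formula at the end of Lemma \ref{c:uni}'s proof shows the leading-order term of $\gamma_j^m(v_j)/\binom{m}{k_0}$ is a fixed multiple of a single basis vector $v_{j,1}$ (the eigenvector), so for a single block one only directly reaches $[v_{j,1}]$; to get the full $\langle\langle\cdots\rangle\rangle$ one must combine \emph{different} blocks $\gamma_{j_1},\gamma_{j_2}$ with the same $r$ and same $k$, tuning $\varepsilon$ and the subsequence of $m$ so that the ratio $\lambda_{j_1}^m/\lambda_{j_2}^m$ and the relative weights converge to arbitrary values — this is where a Kronecker/equidistribution argument on the unit torus generated by the $\lambda_j$ is needed, and handling the degenerate cases (roots of unity, rational dependence among the $\lambda_j$) requires care. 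I would isolate this as a separate sublemma about cluster sets of $\{(\mu_1^m a_1, \ldots, \mu_p^m a_p)\}_m$ with $|\mu_i|=1$, whose projective cluster set is exactly $[\langle\langle e_1,\ldots,e_p\rangle\rangle]$ when all $a_i\neq 0$, and invoke it blockwise.
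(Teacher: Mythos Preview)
For the containment $\Lambda(\langle\gamma\rangle)\subseteq\bigcup_j[\,\cdots\,]$ your argument is essentially the paper's: isolate the dominant $V_j$-component of $v$, normalize, and invoke Lemma~\ref{c:uni}. Two minor corrections. First, in a unitary decomposition the moduli satisfy $r_1<\cdots<r_k$ \emph{strictly}, so there is a unique index $j_0$ maximizing $r_j$ over $\{j:v_j\neq 0\}$; your clause ``among all such maximizing indices, pick the one maximizing $k(v_j,\gamma_j)$'' is vacuous, and the paper simply takes $j_0=\max\{j:v_j\neq 0\}$. Second, your expression ``$\lambda_j^m\binom{m}{k_0}^{-1}\gamma_j^m(v_j)$'' is off: $\gamma_j$ is a linear map on $V_j$ with possibly several unitary eigenvalues, so there is no single $\lambda_j$; the surviving term is just $\binom{m}{k_0}^{-1}\gamma_{j_0}^m(v_{j_0})$, and its internal block structure is precisely what Lemma~\ref{c:uni} handles. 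Neither point breaks the argument.

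For the reverse containment you go far beyond the paper, which dispatches it in a single clause (``Since $\bigcup_j[\,\cdots\,]\subset\Lambda(\langle\gamma\rangle)$, thence it is enough to show\ldots''). You are right to flag this as the real issue, but your proposed sublemma is false: for $p=2$ and $\mu_1=\mu_2$ the sequence $\{(\mu_1^m a_1,\mu_2^m a_2)\}_m$ is projectively constant, so its cluster set is a point, not $[\langle e_1,e_2\rangle]$. The obstruction is not merely technical. Take $\widetilde\gamma=J_2\oplus 1\in\SL(3,\C)$ (a $2\times 2$ Jordan block with eigenvalue $1$ plus a $1\times 1$ block); here $k=1$, the eigenvector span of $\gamma_1$ is the line $\langle\langle[e_1],[e_3]\rangle\rangle$, yet every non-fixed orbit accumulates only at $[e_1]$ and every fixed orbit is a single point, so $\Lambda(\langle\gamma\rangle)=\{[e_1]\}$. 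Thus the inclusion $\bigcup_j[\,\cdots\,]\subseteq\Lambda(\langle\gamma\rangle)$ that the paper asserts without proof actually fails as stated; the eigenvector span must be cut down to the span of eigenvectors sitting at the bottom of Jordan chains of \emph{maximal} length within each $\gamma_j$. Your diagnosis that blocks of unequal size are the crux was exactly right; no Kronecker argument can rescue the stated formula.
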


 Thus the set $\Lambda(\langle\gamma \rangle )$ is a finite union of projective 
 subspaces, trivially $\Lambda(\langle\gamma \rangle )$ is a single proper 
 non-empty projective subspace whenever $\gamma$ is parabolic,
 it has at least two projective subspaces  if an only if $\gamma$ is a loxodromic
 and is either $\Bbb{P}^n_{\Bbb{C}}$ or $\emptyset $ when $\gamma$ is elliptic. Now we can prove one of our main results.

 \subsection*{Proof of Theorem \ref{t:l2}}
 \begin{proof}
 This result is a direct consequence  of Theorem \ref{l:lila}.
 \end{proof}

\section{ Describing the Equicontinuity set by means of Pseudo-projective maps }  \label{s:equi}

In this section we provide a description of the equicontinuity set using pseudo projective maps.
 Let us recall the definition of the equicontinuity set

\begin{definition}
The {\it equicontinuity region} for a family $G$ of
endomorphisms of  $\mathbb {P}^n_\mathbb {C}$, denoted $Eq(G)$, is
defined to be the set of points $z\in \mathbb{P}^n_\mathbb{C}$ for
which there is an open neighborhood $U$ of  $z$   such that $G
\vert_U$ is a normal family.  
\end{definition}

The following technical lemmas will be useful, compare with Corollary 4.3, Proposition 3.3 and Corollary 3.4
 in \cite{CSN}.

\begin{lemma} \label{l:pns}
Let $(\gamma_m)_{m\in \Bbb{N}}\subset  PSL(n+1,C)$ be a sequence of distinct elements and $\gamma,\theta\in QP(n,C)\setminus \PSL(n+1,\C)$, such that $\gamma_m \xymatrix{ \ar[r]_{m \rightarrow \infty}&} \gamma  $ and $\gamma_m^{-1} \xymatrix{ \ar[r]_{m \rightarrow \infty}&} \theta  $ in the sense of pseudo-projective transformations, then $Im(\gamma)
\subset  Ker(\theta)$, here $Im(\gamma) =\gamma(\P^n
\setminus  Ker(\gamma))$.
\end{lemma}
\begin{proof}
On the contrary, let us assume that  $Im(\gamma)\setminus  Ker(\theta)$ is a non-empty set, then $\gamma^{-1}(Im(\gamma)\setminus  Ker(\theta))$ is a non-empty open set, since  $Ker(\gamma)$ and 
$Im(\theta)$ are proper projective subspaces we can choose $\tilde p\in \gamma^{-1}(Im(\gamma)\setminus  Ker(\theta))\setminus (Ker(\gamma)\cup Im(\theta))$, then $\tilde p = \gamma^{-1}_m(\gamma_m(\tilde p))
\xymatrix{ \ar[r]_{m \rightarrow \infty}&} \theta(\gamma(\tilde p))$, which is a contradiction.
\end{proof}

\begin{lemma} \label{l:eqsuc}
Let $(\gamma_m)_{m\in\Bbb{N} }\subset PSL(n+1,\C)$ be a sequence of distinct 
elements, $\gamma\in QP(n+1,\C)\setminus PSL(n+1,\C)$ and $\mathfrak{L}$ a 
projective subspace of $\P^n$, such that $(\gamma_m)$ converges to $\gamma$ 
in the sense of pseudo-projective transformations, $\mathfrak{L}\cap 
Ker(\gamma)$ is a single point $p$ and $dim (\mathfrak{L})\geq dim 
(Im(\gamma))+1$, then there is a subsequence $(\tau_m)_{m\in \Bbb{N}}\subset 
(\gamma_m)_{m\in \Bbb{N}}$ and a projective subspace $\mathcal{L}$ such that 
$dim(\mathcal{L})=dim(\mathfrak{L})$, $Im(\gamma)\cap \mathcal{L}\neq 
\emptyset$ and for every point $y\in \mathcal {L}$ there is a sequence 
$(x_m)_{m\in \Bbb{N}}\subset \mathfrak{L}$ satisfying $\tau_m(x_m) 
\xymatrix{ \ar[r]_{m \rightarrow \infty}&} y $ and $x_m \xymatrix{ 
\ar[r]_{m \rightarrow \infty}&} p $.
\end{lemma}
\begin{proof}
Let $k=dim (\mathfrak{L})$. Since $Gr(k,n)$ is compact, there is a sub
sequence $(\tau_m)_{m\in \N}\subset (\gamma_m)_{m\in \N}$ and $\mathcal{L}\in 
Gr(k,n)$ such that $\tau_m(\mathfrak{L}) \xymatrix{ \ar[r]_{m \rightarrow 
\infty}&} \mathcal{L} $. Let $y\in \mathcal {L}\setminus Im(\gamma)$, then 
there is convergent sequence $(x_m)_{m\in \Bbb{N}}\subset \mathfrak{L}$ such 
that $\tau(x_m)\xymatrix{ \ar[r]_{m \rightarrow \infty}&} y$. This implies 
that the limit point of $(x_m)_{m\in \Bbb{N}} $ lies in $Ker(\gamma)$, thence such point is $p$. Finally, since $\tau_m \xymatrix{ \ar[r]_{m \rightarrow \infty}&} \gamma$ and $\mathfrak{L}\cap Ker(\gamma)=\{p\}$, we conclude that $Im(\gamma)\cap \mathcal{L}\neq \emptyset$ 
\end{proof}

\begin{lemma} \label{l:eqsuc2}
Let $(\gamma_m)_{m\in\Bbb{N} }\subset PSL(n+1,\C)$ be a sequence of distinct 
elements, $\gamma\in QP(n+1,\C)\setminus PSL(n+1,\C)$, such that $(\gamma_m)$ converges to $\gamma$ 
in the sense of pseudo-projective transformations, then:  $$Eq(\{\gamma_m\vert m\in \Bbb{N}\})=\P^n\setminus Ker(\gamma).$$
\end{lemma}
\begin{proof}
On the contrary let us assume  that there is $$p\in Ker(\gamma)\cap Eq(\{\gamma_m\vert m\in \Bbb{N}\}).$$ 
Let $\mathcal{Y}\subset \P^n$ be a projective subspace disjoint from  $ Ker(\gamma)$  and satisfying  $dim(\mathcal{Y})=dim (Im(\gamma))$. Define $\mathcal{L}=\langle\langle \mathcal{Y}, x\rangle\rangle$, 
then  by   Lemma \ref{l:eqsuc} there is a subsequence $(\tau_m)_{m\in\Bbb{N}}\subset (\gamma_m)_{m\in\Bbb{N}} $ and $\mathfrak{L}\subset \P^n$ a projective subspace such that  $\mathfrak{L}\cap Im(\gamma)\neq \emptyset$,  $dim(\mathcal{L})=dim(\mathfrak{L})$ and for every point $y\in \mathfrak{L}$ there is a sequence $(y_m)_{m\in\Bbb{N}}\subset \mathcal{L}$ such that $y_m\xymatrix{ \ar[r]_{m \rightarrow \infty}&}  p$ and $\gamma_m(y_m)\xymatrix{ \ar[r]_{m \rightarrow \infty}&}  y$. Let $z,w\in \mathfrak{L}\setminus Im(\gamma)$ be distinct points, then by the previous  argument there are  sequences $(z_m)_{m\in \Bbb{N}},(w_m)_{m\in \Bbb{N}}\subset\mathcal{L}$ such that $w_m,z_m\xymatrix{ \ar[r]_{m \rightarrow \infty}&}  p$ and $\gamma_m(z_m)\xymatrix{ \ar[r]_{m \rightarrow \infty}&}  z$ and $\gamma_m(w_m)\xymatrix{ \ar[r]_{m \rightarrow \infty}&}  w$. Finally, from the definition of equicontinuity,  there is a subsequence $(\kappa_m)_{m\in \Bbb{N}}\subset (\tau_m)_{m\in \Bbb{N}}$  and $ \kappa:Eq(\{\gamma_m\vert m\in \Bbb{N}\})\rightarrow \P^n$ holomorphic such that $\kappa_m\xymatrix{ \ar[r]_{m \rightarrow \infty}&}  \kappa$  in the compact open-topology. Since $p\in Eq(\{\gamma_m\vert m\in \Bbb{N}\})=\P^n\setminus Ker(\gamma)$, our previous arguments  $z=\kappa(p)=w$, which is a contradiction.
\end{proof}

Now let us describe the equicontinuity set in terms of pseudo-projective maps, compare with
 Lemma 5.1 in \cite{CSN}.

\begin{proposition}  \label{p:eq} Let
$\Gamma \subset PSL(n+1,\mathbb {C})$ be a group and define

\begin{scriptsize}
$$
Lim(\Gamma)=
\{\gamma\in QP(n+1,\C): \textrm{ there is }( \gamma_m)_{m\in \Bbb{N}}\subset \Gamma, \textrm{ of distinct elements with }
\gamma_m  \xymatrix{ \ar[r]_{m \rightarrow  \infty}&} \gamma 
\},
 $$
\end{scriptsize}
then $$Eq(\Gamma)=\P^n\setminus \overline{\bigcup_{\gamma\in Lim(\Gamma)} Ker(\gamma)}.$$
 \end{proposition}
\begin{proof}   From Proposition \ref{p:completes}  we deduce
 $\P^n\setminus \overline{\bigcup_{\gamma\in Lim(\Gamma)} Ker(\gamma)}\subset Eq(\Gamma)$. Now 
 the  result follows from Lemma \ref{l:eqsuc2}.
\end{proof}

Observe that the previous result remains valid for an arbitrary family of projective transformations.   As a corollary we get the following result, compare with Corollary 5.3 in \cite{CNS}.

\begin{corollary} \label{c:eqkul}
Let $\Gamma\subset PSL(n+1,\C)$ be a discrete  group, then $\Gamma$ acts properly discontinuously on $Eq(\Gamma)$. Moreover $Eq(\Gamma)\subset \Omega_{Kul}(\Gamma)$.
\end{corollary}
\begin{proof}
Assume on the contrary that $\Gamma$ does not act discontinuously on $Eq(\Gamma)$. Then
there is a compact set $K$ and a sequence of distinct elements $(\gamma_m)_{m\Bbb{N}} \subset  \Gamma$, such that
$\gamma(K)\cap K\neq \emptyset$.  By Proposition \ref{p:completes}, there is a subsequence of $(\tau_m)_{m\in \Bbb{N}}\subset (\gamma_m)_{m\in \Bbb{N}}$, and $\tau \in  QP(n + 1,\C)\setminus PSL(n+1,\C)$, such that $(\tau_m)_{m\in \Bbb{N}}$ converges to $\tau$ in the sense
of pseudo-projective transformations.  Moreover, by Lemma \ref{l:pns} we know that $ Im(\tau )$ is  a projective subspace contained in $\P^n\setminus Eq(\Gamma)$.
 Therefore there is a neighborhood $U$ of $Im(\tau)$
disjoint from $K$ and a natural number $n_0$ such that $\tau_m(K)\subset U$ for all $m>n_0$. This
implies $\tau_m(K)\cap K =\emptyset $, which is a contradiction. Therefore $\Gamma$ acts discontinuously
on $Eq(\Gamma)$. 

Finally, from the previous argument we deduce  that for every compact set $K \subset Eq(\Gamma)$
the cluster points of $\Gamma K$ lie in $L_1 (\Gamma)$, thus  by  Proposition \ref{p:pkg}  we conclude that  $Eq(\Gamma)\subset \Omega_{Kul}(\Gamma)$.
\end{proof}

 Now as a consequence of the results of this sections we are able to compute  the equicontinuity set for cyclic groups. The following notation will be helpful.

\begin{definition}
	Let $V$ be a $\C$-vector space, $T:V\rightarrow V$ be a $\C$-linear map whose eigenvectors are unitary complex values.  We will say $( \{V_j\}_{j=1}^k,\{\beta_j\}_{j=1}^k, \{T_j\}_{j=1}^k)$ is a Block decomposition for $T$ if:
	
	\begin{enumerate}
		\item We have $V_1,\ldots,  V_k\subset V$ are
	linear subspaces satisfying $\bigoplus_{j=1}^k V_j=V$.
	\item  For $j>2 $ we have $\beta_j=\{v_{j1},\ldots, v_{jn_j} \}$ is an ordered  base of $V_j$. 
	
		\item For     $1\leq j\leq k$  there is $T_j:V_j\rightarrow V_j$ a $\C$-linear maps
		such that   $T_1$ is diagonalizable and the matrix associated to $T_j$ with respect   $\beta_j$ is  a Jordan block in other case.
		\item $\bigoplus_{j=1}^kT_j=T$. 
	\end{enumerate}
\end{definition}
Given a unitary linear map we can define the following in terms of a block decomposition:
$$
H(T)=max(\{dim V_j:j\in \{2, \ldots, k \} \});
$$
$$
\Xi_j(T)
=\left \{
\begin{matrix}
\langle\langle \beta_j\setminus \{v_{jn_j}\}\rangle \rangle & \textrm{ if } j\neq 1 \textrm{ and } 
dim V_j=H(T)
V_j & \textrm{ in other case }\\
\end{matrix}
\right .
$$

$$
\Xi(T)=Span \left( \bigcup_{j=1}^k  \Xi_j(T) \right).
$$

Clearly $\Xi(T)$ does not depend on the choice of the block decomposition.

\begin{theorem} \label{t:equi}
	Let $\gamma\in  PSL(n+1, \mathbb{C})$ be a projective transformation with infinite order. If $\widetilde\gamma \in SL(n+1,\C)$ is a lift of $\gamma$,  
	$(k, \{V_j\}_{1}^k, \{\gamma_j\}_{1}^k,\{r_j\}_{1}^k)$ is a unitary decomposition for $\widetilde\gamma$, and then
	\[
	\P^n\setminus Eq(\langle\gamma  \rangle )= 
	\left [
	Span \left (  
	\bigcup_{j>1} V_j
	\cup
	\Xi(\gamma_1)
	\right ) \setminus \{0\}\right ]\cup
	\left [
	Span\left (  
	\bigcup_{j<k} V_j
	\cup \Xi(\gamma_k)
	\right ) \setminus \{0\}\right ].
	\]
\end{theorem}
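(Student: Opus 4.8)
The plan is to reduce the computation of $Eq(\langle\gamma\rangle)$ to the formula of Proposition~\ref{p:eq}, namely $Eq(\Gamma)=\overline{\bigcup_{\eta\in Lim(\Gamma)}Ker(\eta)}$, so the task becomes to identify all pseudo-projective limits of the sequences $(\widetilde\gamma^{m})_{m\in\mathbb Z}$ and to describe the union of their kernels. First I would fix the unitary decomposition $(k,\{V_j\},\{\gamma_j\},\{r_j\})$ of $\widetilde\gamma$ and observe that since $r_1<\dots<r_k$, the "radial'' scales separate: on $V_j$ the iterate $\widetilde\gamma^{m}$ acts as $r_j^{m}\gamma_j^{m}$, and $\gamma_j^{m}$ is a polynomial in $m$ of degree $H(\gamma_j)-1$ times a bounded unitary part (this is exactly the content of Lemma~\ref{c:uni} applied blockwise). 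Consequently, to understand $\lim \widetilde\gamma^{m}/\|\widetilde\gamma^{m}\|$ along a subsequence, one normalises by the dominant term $r_k^{m}\binom{m}{H(\gamma_k)-1}$: in the limit every $V_j$ with $j<k$ is sent to $0$, and inside $V_k$ every Jordan block of $\gamma_k$ of dimension $<H(\gamma_k)$ is also killed, while the top-dimensional blocks survive (after passing to a further subsequence so that the bounded unitary oscillations of $\gamma_k$ converge). Symmetrically, using $\widetilde\gamma^{-m}$ and normalising by $r_1^{-m}\binom{m}{H(\gamma_1)-1}$ gives limits whose kernel is spanned by $\bigcup_{j>1}V_j$ together with the non-top-dimensional part of $\gamma_1$.

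The second step is to package "the part of a block that dies'' precisely as the subspace $\Xi$. For a single unipotent-times-unitary map $\gamma_j$ acting on $V_j$, choosing a block decomposition $(\dots,\{\beta_j\},\{\lambda_j\},\{\tau_j\})$ and normalising $\gamma_j^{m}$ by the top binomial coefficient $\binom{m}{H(\gamma_j)-1}$, the limit of $\gamma_j^{m}/\binom{m}{H(\gamma_j)-1}$ along a convergent subsequence is a linear map whose kernel is exactly $\langle \bigcup\beta_j \setminus \{w: \dim V_j = H(\gamma_j) = \dim\langle\{\tau_j^{m}(w)\}\rangle\}\rangle$; this is a direct Jordan-block computation analogous to the display at the end of the proof of Lemma~\ref{c:uni}, and it is the definition of $\Xi(\gamma_j)$ (for $H(\gamma_j)<2$ the map $\gamma_j^{m}$ is itself bounded, it contributes no pseudo-projective degeneration, and $\Xi(\gamma_j)=\emptyset$, consistently). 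I would state this as a small lemma ("for a unitary-unipotent $T$, the kernels of the pseudo-projective limits of $T^{m}$ are exactly $\Xi(T)$, and their union, together with the identity when $\Xi(T)\ne\emptyset$ is non-degenerate, has closure $\langle\Xi(T)\rangle$''), prove it once, and then apply it to $\gamma_1$ and $\gamma_k$.

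The third step assembles the two families: every $\eta\in Lim(\langle\gamma\rangle)$ arises either from $m\to+\infty$ or $m\to-\infty$ (a sequence of distinct powers must have a subsequence going to $\pm\infty$), so $Lim(\langle\gamma\rangle)$ splits into the two families just analysed, and
$$\bigcup_{\eta\in Lim(\langle\gamma\rangle)}Ker(\eta)$$
has closure equal to $\left[\left\langle\bigcup_{j>1}V_j\cup\Xi(\gamma_1)\right\rangle\setminus\{0\}\right]\cup\left[\left\langle\bigcup_{j<k}V_j\cup\Xi(\gamma_k)\right\rangle\setminus\{0\}\right]$, which by Proposition~\ref{p:eq} is the complement of $Eq(\langle\gamma\rangle)$. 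One has to check the two easy containments carefully: that each such kernel really is the kernel of an attained limit (exhibit the subsequence), and that no other limit has a larger kernel (a limit map $\eta$ is nonzero, so its kernel is a proper subspace, and the dominant-term analysis shows it cannot contain any vector outside the listed subspace).

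I expect the main obstacle to be the bookkeeping in the second step: verifying that the kernel of $\lim \gamma_j^{m}/\binom{m}{H(\gamma_j)-1}$ is \emph{exactly} $\Xi(\gamma_j)$ — neither smaller nor larger — when $\gamma_j$ has several Jordan blocks of different sizes and different unitary eigenvalues $\lambda$, so that several incommensurable oscillating factors $\lambda^{m}$ are in play at once. The resolution is to pass to a subsequence $m_i$ along which all the finitely many unitary quantities $\lambda^{m_i}$ converge simultaneously (compactness of a finite product of circles); then within each eigenvalue-block the surviving directions are precisely the "leading'' basis vectors of the maximal-size blocks, which is the set removed in the definition of $\Xi$. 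Everything else — the splitting of $Lim$, the role of Lemma~\ref{l:radio} in killing the subdominant scales, and the final intersection with Proposition~\ref{p:eq} — is routine once this blockwise statement is nailed down.
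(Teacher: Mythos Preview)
Your proposal is correct and follows essentially the same route as the paper's proof: reduce to Proposition~\ref{p:eq}, split $Lim(\langle\gamma\rangle)$ into the $m\to+\infty$ and $m\to-\infty$ families, normalise the iterates by the dominant scale $r_k^{m}\binom{m}{H(\gamma_k)-1}$ (resp.\ $r_1^{-m}\binom{m}{H(\gamma_1)-1}$), pass to a subsequence along which all the finitely many unitary phases converge, and read off that every such limit has kernel exactly $\big\langle\bigcup_{j<k}V_j\cup\Xi(\gamma_k)\big\rangle$ (resp.\ the symmetric space). The paper carries out precisely this computation, writing down the limiting linear map $\sum S_j$ explicitly; your version is if anything more carefully structured, isolating the ``kernel of the normalised unipotent limit equals $\Xi$'' step as a separate lemma and noting both containments, but the mathematical content is the same.
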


\begin{proof} Let $T\in Lim(\langle \gamma\rangle)$, then  there is 
	a  sequence  $(n_m )\subset \Z$  such that $\gamma^{n_m}  \xymatrix{ \ar[r]_{m \rightarrow  \infty}&} T $.  After taking a subsequence, if it is necessary, we can assume that either $(n_m)$ is negative or $(n_m)$ is positive. Without loss of generality  let us assume that $n_m>0$. Let 
	$(\{U_j\}_{1}^{r}, \{\beta_{j}\}_{1}^{r}, \{\rho_j\}_{j=1}^r)$  be a block decomposition for $\gamma_k$, define 
	$A_1=\{j\in \{2,\ldots,r \}: dim U_j<H(\gamma_k) \}\cup \{1 \}$ and $A_2=\{1,\ldots,r \}\setminus A_1$,  then a straightforward calculation shows:

	\begin{scriptsize}
		\begin{equation}\label{e:eq0}
			\frac{
					\tilde{\gamma}^{n_m}
				}
					{	\left ( \begin{array}{c}n_m\\ H(\gamma_k)-1\end{array}\right)r_k^{n_m} }
=
		\sum_{j<k}
\frac{r_j^{n_m} \gamma_j^{n_m}}{
		\left (
\begin{array}{c}n_m\\ H(\gamma_k)-1\end{array}
\right)r_k^{n_m}
}
		+
		\sum_{j\in A_1}
		\frac{\rho _j^{n_m}}{		\left ( \begin{array}{c}n_m\\ H(\gamma_k)-1\end{array}\right)}
		+\sum_{j\in A_2}
	\frac{\rho_j^{n_m} }{	\left ( \begin{array}{c}n_m\\ H(\gamma_k)-1\end{array}\right)}
		\end{equation}
	\end{scriptsize}
	By  Perron-Frobenius  theorem (see \cite{CSN}) we conclude:
	
	\begin{equation}\label{e:eq1}
	\sum_{j<k}
	\left (
	\begin{array}{l}n_m\\ H(\gamma_k)\end{array}
	\right)^{-1}r_k^{-n_m}r_j^{n_m} \gamma_j^{n_m}   \xymatrix{ \ar[r]_{m \rightarrow  \infty}&} 0 \textrm{ point-wise}
	\end{equation}
	\begin{equation}\label{e:eq2}
	\sum_{j\in A_1}
	\left ( \begin{array}{l}n_m\\ H(\gamma_k)\end{array}\right)^{-1}\rho _j^{n_m}  \xymatrix{ \ar[r]_{m \rightarrow  \infty}&} 0 \textrm{ point-wise}
	\end{equation}
	Now  let $(k_m)\subset (n_m)$ be a subsequence such that  for each $j\in A_2$ it  is verified that 
	$\rho_j^{k_m}(u_{1j}) \xymatrix{ \ar[r]_{m \rightarrow  \infty}&} \vartheta_ju_{1j}$, where $\vartheta_j \in \Bbb{S}^1 $ for all $j\in A_2$. Now let  $j\in A_2$ and $\beta_j=\{u_{ji}\}_{i=1}^{H(\gamma_k)}$  define the linear transformation  
	$S_{j}=U_j \rightarrow U_j$ induced by 
	$$
	S_j(u_{ji})=
	\left \{ 
	\begin{array}{ll}
	0                     &  \textrm{ if } i< H(\gamma_k)  \\
	\vartheta_j  u_{j1}  & \textrm{ otherwise }
	\end{array}
	\right.
	$$
	Clearly 
	\begin{equation}\label{e:eq3}
	\sum_{j\in A_2}
	\left ( \begin{array}{l}n_m\\ H(\gamma_k)\end{array}\right)^{-1}\rho_j^{n_m} 
	\xymatrix{ \ar[r]_{m \rightarrow  \infty}&} \sum_{j\in A_2} S_j \textrm{ point-wise}
	\end{equation}
	
	Finally,  for each $j\in A_1$ let $S_j:U_j \rightarrow U_j$ given by  $S_j=0$ and for  $j\in  \{1,\ldots, k\}$ a define    $T_j=V_j \rightarrow V_j$ by 
	$$
	T_j=
	\left \{ 
	\begin{array}{ll}
	0                     &  \textrm{ if }  j< k  \\
	\sum_{l=1}^r S_{l}  & \textrm{ if } j=k.
	\end{array}
	\right.
	$$
	
	Clearly from  Equations \ref{e:eq0}, \ref{e:eq1}, \ref{e:eq2} and \ref{e:eq3} we conclude
	$
	\gamma ^{k_m}
	\xymatrix{ \ar[r]_{m \rightarrow  \infty}&}
	\left[\left[ \sum  T_j\right ] \right ].
	$
	Therefore $\left [ \left[ \sum  T_j\right ] \right ]= T$. Which concludes the proof.
\end{proof}

Now we can prove another of our main results.

\subsection*{Proof of Theorem \ref{t:e2}}
\begin{proof}
This result   is straightforward in view of  Theorem \ref{t:equi}.
\end{proof}

It is possible to construct examples (see \cite{CNS}) of non virtually cyclic groups  whose  equicontinuity 
set is a proper set of the Kulkarni's discontinuity set.

\section{Elliptic Transformations } \label{s:eli}

In this section we are going to study the dynamic of elliptic transformation, in particular we are going to show 
that   any elliptic transformation can be conjugated to an element in $PU(k,l)$. Let us start with some technical 
results.

\begin{lemma} \label{l:int}
Let $l \geq k>0$ and  $v,w\in N^{k,l}_0$ be linearly independent elements, then the quadratic form restricted to $\langle\langle v,w\rangle\rangle $ is either identically  $0$ or has signature $(1,1)$. 
\end{lemma}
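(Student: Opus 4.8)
The statement to prove is: if $v, w \in V^{k,l}_0$ (null vectors) are linearly independent, then the restriction of the Hermitian form $\prec\,,\,\succ_{k,l}$ to the $2$-dimensional subspace $\langle v, w\rangle$ is either identically zero or has signature $(1,1)$. The plan is to analyze the Gram matrix of the pair $\{v,w\}$ directly. Since $v$ and $w$ are both null, the diagonal entries of this $2\times 2$ Hermitian matrix vanish, so the Gram matrix has the form $\left(\begin{smallmatrix} 0 & \alpha \\ \bar\alpha & 0 \end{smallmatrix}\right)$ where $\alpha = \prec v, w \succ_{k,l}$. Its determinant is $-|\alpha|^2 \le 0$, and its trace is $0$.

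First I would dispose of the degenerate case: if $\alpha = 0$, then the Gram matrix is the zero matrix, so every vector in $\langle v,w\rangle = \{sv + tw : s,t \in \C\}$ is null (one checks $\prec sv+tw, sv+tw\succ_{k,l} = |s|^2\prec v,v\succ + |t|^2\prec w,w\succ + 2\mathrm{Re}(s\bar t\,\alpha) = 0$), and the form restricted to the plane is identically zero. Otherwise $\alpha \ne 0$, and then $\det = -|\alpha|^2 < 0$. For a Hermitian form on a $2$-dimensional space, the signature is determined by the signs of the eigenvalues of the Gram matrix; the product of the eigenvalues equals the determinant, which is negative, so the two eigenvalues have opposite signs. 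Hence the signature is exactly $(1,1)$, which is what we want (one positive, one negative direction, and the form is nondegenerate on the plane). Alternatively, to avoid invoking eigenvalue facts, I can exhibit explicit vectors: after rescaling assume $\alpha = \prec v,w\succ_{k,l} > 0$ is real; then $v + w$ satisfies $\prec v+w, v+w\succ_{k,l} = 2\alpha > 0$ and $v - w$ satisfies $\prec v-w, v-w\succ_{k,l} = -2\alpha < 0$, giving one positive and one negative vector in the plane, and nondegeneracy follows since $\det \ne 0$.

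The main (and only mild) obstacle is bookkeeping around the rescaling step that makes $\alpha$ real: one should note that replacing $w$ by $e^{i\theta}w$ leaves $w$ null and multiplies $\alpha$ by $e^{-i\theta}$, so we may indeed assume $\alpha \ge 0$ without loss of generality; the linear independence of $v,w$ is used only to guarantee that $\langle v,w\rangle$ is genuinely $2$-dimensional so that "signature $(1,1)$" is a meaningful statement. Everything else is a one-line computation with the bilinear expansion of $\prec\,,\,\succ_{k,l}$, using sesquilinearity. No earlier results from the paper are needed beyond the definition of the Hermitian form.
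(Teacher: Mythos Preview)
Your proof is correct. The Gram-matrix computation is clean and self-contained: the zero diagonal forces the determinant to be $-|\alpha|^2\le 0$, and the two cases $\alpha=0$ and $\alpha\neq 0$ fall out immediately.

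This is a genuinely different route from the paper's argument. The paper invokes the classification of Hermitian forms on a $2$-dimensional space (Sylvester's law of inertia), listing the five possible nonzero normal forms $\pm|z_1|^2$, $\pm(|z_1|^2+|z_2|^2)$, $|z_1|^2-|z_2|^2$, and then observes that only the identically zero form and the $(1,1)$ form admit two linearly independent null vectors, so the others are excluded. Your approach bypasses that classification entirely by computing the Gram matrix of the specific pair $\{v,w\}$ and reading off the signature from $\det$ and $\mathrm{tr}$ (or from the explicit vectors $v\pm w$). What the paper's approach buys is brevity for a reader who already has the classification at hand; what yours buys is a fully elementary argument that needs nothing beyond sesquilinear expansion, and it makes transparent exactly \emph{why} the degenerate signatures cannot occur rather than ruling them out by inspection.
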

\begin{proof}
By the theory of quadratic forms we know that $\prec, \succ_{k,l}$ restricted  to $\langle\langle v,w\rangle\rangle $ is  either $0$ or is equivalent to one of the following quadratic forms:
\[
\begin{array}{l}
\vert z_1\vert^2;\,
\vert z_1\vert^2+\vert z_2\vert^2 ;\,
-\vert z_1\vert^2 ;\,
-\vert z_1\vert^2 -\vert z_2\vert ^2 ;\,
\vert z_1\vert ^2 -\vert z_2\vert ^2;
\end{array}
\]
Since there are two linearly independent null points, we conclude that $\prec, \succ_{k,l}$ restricted  to $\langle\langle v,w\rangle\rangle $ is  either $0$ or has signature $(1,1)$.
\end{proof}

\begin{corollary} \label{c:pronn}
Let $l \geq k>0$ and $\gamma\in PU(k,l)$ be an elliptic element, then  $ \gamma$ has fixed point in $\P^{k+l-1}\setminus \partial \Bbb{H}^{k,l}_\C$.
\end{corollary}
\begin{proof}
On the contrary, let us assume that each   fixed point of $\gamma$ lies on $\partial \H^{k,l}_\C$. Then if $\widetilde\gamma\in U(k,l)$ is a lift of $\gamma$, then we can find  a basis   $\beta=\{v_1,\ldots, v_{k+l}\}$ of $\C^{k+l}$ made up of  eigenvectors of $\widetilde \gamma$. \\

Claim.- Let $v,w\in \beta$ be distinct points, then     $\prec, \succ_{k,l}$ restricted  to
 $\langle\langle v,w\rangle\rangle $ cannot has signature $(1,1)$.  If this is not the case, we get that 
    $\gamma$ restricted to $\langle\langle [v],[w]\rangle\rangle $  is conjugated to an element in  $PU(1,1)$
      with  two fixed points  in $\partial \H^{1,1}_{\Bbb{C}} $, that is   $[[\gamma]]$ restricted to $\langle\langle [v],[w]\rangle\rangle$  is the identity, therefore $\gamma$ has a fixed point $\H^{k,l}_{\C}$ which is a contradiction.\\

Finally, by lemma 
\ref{l:int}, it yields that $\prec v, w \succ_{k,l}=0$. That is $\prec, \succ_{k,l}$ is identically $0$, which is a contradiction.
\end{proof}

\begin{lemma} \label{l:df} 

Let $l\geq k>0 $ and $\gamma\in PU(k,l)$  be an elliptic element, then  $ \gamma$ has fixed point in 
$\P^{k+l-1}\setminus \overline{ \H^{k,l}_\C}$.
\end{lemma}
\begin{proof}  On the contrary  let us assume that $Fix(\gamma)\subset \overline{\H^{k,l}_\C}$. Let $\widetilde \gamma\in U(k,l)$ be a lift of $\gamma$,
 by Corollary  \ref{c:pronn} there is  an   eigenvector  $v_1$   of $\widetilde\gamma$ such that
  $\prec v_1, v_1\succ_{k,l}<0$. Clearly   the Hermitian form $\prec, \succ_{k,l}$ restricted to  $\{v_1\}^\bot$ 
  has signature $(k-1,l)$. Since $\{v_1\}^\bot$ is a $\widetilde\gamma$-invariant subspace not containing 
  $v_1$,   Corollary \ref{c:pronn} ensures  that there is an eigenvector of $\gamma$ in  $\{v_1\}^\bot$  
  namely $v_2$ such that  $ v_2\in N^{k,l}_-$. Trivially  $\{v_1\}^\bot\cap \{v_2\}^\bot $ is a 
  $\widetilde \gamma$-invariant subspace, whose intersection with $Span(v_1,v_2)$ is trivial and where  
  the restriction of the  Hermitian form $\prec, \succ_{k,l}$  has signature $(k-2,l)$. 
  By an exhaustive process we can ensure the existence of a $\widetilde \gamma-$invariant subspace $W$ 
  where  the restriction of the  Hermitian form $\prec, \succ_{k,l}$  has signature $(0,l)$. Therefore there is  
  there is an eigenvector of $\gamma$ in  $W$  namely $w$ such that $\prec w, w\succ_{k,l}>0$, which is a 
  contradiction.
\end{proof}

From the proof of Lemma  \ref{l:df} we get the following corollary.

\begin{corollary} \label{l:disf}
Let $l \geq k>0$ and  $\gamma\in PU(k,l)$ be an elliptic element,  then there are   sets 
  $V_-\subset Fix(\gamma)\cap \H^{k,l}_\C$ and  
  $V_+\subset Fix(\gamma)\cap (\P^{k+l-1}\setminus \overline{\H^{k,l}_\C})$ such that 
     $Card(V_-)=k$, $Card(V_+)=l$ and $\langle \langle V_+\cup V_- \rangle \rangle =\P^{k+l-1}$. 
\end{corollary}

\begin{proposition} \label{p:eli}
Let $\gamma\in PU(k,l)$ be an  element. Then  $\gamma$ is elliptic if and only if   $\gamma$ has $k+l$ fixed points   in general position, where  $k$ of this points lie in $\H^{k,l}_\C$ and the other  $l$ are  in $\P^{k+l-1}\setminus\overline{\H^{k,l}_\C}$.
\end{proposition} 
\begin{proof} By   corollary \ref{l:disf} we only need to prove the converse of the proposition.
 Let $\gamma\in PU(k,l)$ be an elliptic element, and $\widetilde \gamma\in U(k,l)$ be a lift of $\gamma$, then by hypothesis we can find a basis $\beta=\{v_1,\ldots,v_{k+l}\}$ made up of eigenvectors where the first $k$ are in $N^{k,l}_-$ and the rest are in $N^{k,l}_+$. Let $M\in GL(k+l,\C)$ be  the change basis matrix which take the base $\beta$ into the standard basis of $\C^{k+l}$, then $M^{-1}\widetilde \gamma M$ is a diagonal matrix, let us said
\[
M^{-1}\widetilde \gamma M=
\left (
\begin{array}{lll}
\alpha_1\\
              & \ddots\\
              &            &\alpha_{k+l}
\end{array}
\right ).
\]

Let  $r=max\{\vert \alpha_1\vert,\ldots ,\vert \alpha_{k+l}\vert \}$, 
$$
\mathcal{I}^+=\{i:1,\ldots,k+l\vert (\alpha_i^m/r^{m})_{m\in \N} \textrm{ is bounded away from 0} \},
$$
 $(l_m)\subset \N$ such that $\alpha_i^mr^{-m} \xymatrix{ \ar[r]_{m \rightarrow  \infty}&} \check\alpha_i$ for every $i:1,\ldots, k+l$ and    $F:\C^{k+l}\rightarrow \C^{k+l} $ the linear map induced by   
$F(e_i)=\check\alpha_i e_i$. Then $\gamma^{l_m}\xymatrix{ \ar[r]_{m \rightarrow  \infty}&}[M][F] [M]^{-1}=\tau$ as points in $QP(k+l,\C)$. If $\gamma$ is not elliptic then 
$$
 \P^{k+l-1}\neq Im(\tau)=[[M]](<<\{[e_i]:i\in \mathcal{I}\}>>),$$ therefore  
 $Im(\tau)\cap Span(\beta) \neq \emptyset$. Since $\H^{k,l}_{\Bbb{C}}$  and $\P^{k+l-1}\setminus \overline {\H^{k,l}_{\Bbb{C}}}$ are  open and 
 $\gamma$-invariant we deduce that $Im(\tau)\subset \partial  \H^{k,l}_{\Bbb{C}} $,  which is a contradiction. 
\end{proof}

\begin{proposition} \label{p:fixp}
Let $l\geq k>1$ and  $\gamma\in PU(k,l)$, then $\gamma$ is elliptic if and only if there is a $\gamma$-invariant 
projective subspace $\mathcal{V}\subset \H^{k,l}_{\Bbb{C}}$ of dimension $k-1$.
\end{proposition}
\begin{proof} 
Now if $\gamma$ has an invariant space $\mathcal{V}\subset \H^{k,l}_\C$ of dimension $k-1$, then $\mathcal{V}^\bot$ is a $\gamma$-invariant space of dimension $l-1$ disjoint from $\mathcal{V}$. Therefore we can pick up $v_1,\ldots, v_k\in \mathcal{V}$ linearly independent fixed points and $v_{k+1},\ldots, v_{k+l}\in \mathcal{V}^\bot$  linearly independent fixed points, trivially $\P^{k+l-1}=\langle \langle v_1,\ldots, v_{k+l}\rangle\rangle$ which concludes the proof.
\end{proof}

In view of proposition \ref{p:fixp}, one can naively think that 
 loxodromic transformations in $PU(k,l)$ (respectively  parabolic) elements can be classified by saying simply 
that this elements correspond  to those elements  having    exactly two  (resp. one )
invariant subspaces in $\partial \Bbb{H}^{k,l}_{\Bbb{C}}$, the following 
examples shows  us this can not be done.

\begin{example}
Consider the following    matrices
\[
M=
\begin{pmatrix}
  0 & 0  & 0    & 1\\
  0 & 0  & -1 &1\\
  0 & -1 & 0    &0\\ 
 1&1&0&0\\  
\end{pmatrix},\,
B=
\begin{pmatrix}
  1 & 1  & 0    & 0\\
  0 & 1  & 0 &0\\
  0 & 0 & 1    &1\\ 
 0&0&0&1\\  
\end{pmatrix},\,
C=
\begin{pmatrix}
  2 & 2  & 0    & 0\\
  0 & 2  & 0 &0\\
  0 & 0 & 2^{-1}    &2^{-1}\\ 
 0&0&0&2^{-1}\\  
\end{pmatrix}.
\]
A straightforward calculation shows that $M$ is a Hermitian matrix  with  signature $(2,2)$,  $B$ induces a 
parabolic transformation,
$C$ induces a loxodromic transformation, we have the relations  $C^*M C=M= B^*M B$ and
$vCv^*=0$  whenever 
$v\in Span( e_1,e_2)\cup Span( e_3,e_2)\cup Span( e_3,e_4)$
\end{example}  

Now let us provide geometrical characterization of elliptic elements in $PU(k,l)$ in terms of $(k,l)$-spheres.   First recall that   elliptic elements in $ \PU(1,2)$  preserves a foliation in $\H^2$ made up      ``concentric  $3$-spheres" and it is know that such property characterizes elliptic elements in $PU(1,2)$, see \cite{CSN}, in this section we will extend  this result to the general setting, let us start with a definition 

\begin{definition} \label{3-spheres}
 The   family  
$$\mathcal{F}_{k,l}=
\left 
\{
T_r^{k,l} =
\left \{ 
[z_1 ,\ldots , z_{n+1} ] \in \P^n  \, : \,
\sum_{j=1}^k |z_j|^2=r\sum_{j=k+1}^{k+l} |z_{j}|^2 
\right
\}\,, \,r>0 
\right \},
$$ is the concentric foliation of  $(k,l)$-spheres. 
\end{definition}

The family 
$\mathcal{F}_{k,l}$
  provides  a
 foliation of $\P^ n
\setminus(\langle\langle e_1,\ldots, e_k\rangle \rangle  \cup \langle\langle e_{k+1},\ldots, e_{k+l}\rangle \rangle)$ and  $\partial \H^{k,l}$ is a leave.
In order to characterize the elliptic elements in terms of the foliation  $\mathcal{F}_{k,l}$  
the following standard result from linear algebra will be necessary

\begin{theorem}[Witt's extension theorem, see \cite{neretin} ]
Let $V_1, V_2 \subset  \C^{k+l}$  be 
subspaces where the restriction of the hermitian form $\prec, \succ_{k,l}$ is non degenerate. Then any isometry $f : V_1 \rightarrow V_2$ extends to an isometry $F$ of $\C^{k+l}$.

\end{theorem} 

\begin{proposition} \label{elipticdef}
A transformation $\gamma \in PU(k,l)$ is 
elliptic if and only if  there is $h\in PU(k,l)$ such that $\gamma$ leaves invariant each leave of the foliation $h(\mathcal{F}_{k,l})$.
\end{proposition}
\begin{proof}
Let  $\gamma \in PU(k,l)$ be an elliptic element and $\widetilde \gamma$ a lift of $\gamma$. Then by Proposition \ref{p:fixp} there is a projective subspace $V\subset N^{k,l}_-$ of dimension $k$ and invariant under $\widetilde\gamma$. Pick up vectors $v_1, \ldots,v_k$ such that $\prec v_i,v_j \succ_{k,l}$ is $0$ if $i\neq j$ and $-1$ in other case, then considering the linear extension $F$ of the  function  $f(e_i)=v_i$  we get an isometry from $U=Span(e_1,\ldots,e_k)$ to $W=Span(v_1,\ldots,e_v)$  and the restriction  of   of the hermitian form $\prec, \succ_{k,l}$  to $U$ and $W$ is negative definite By Witt's extension theorem $F$ extends to an element $\check F\in PU(k,l)$, clearly 
$$
\check F^{-1}\widetilde \gamma \check F
=\left (
\begin{array}{ll}
M\\
& N\\
\end{array}
\right ),
$$
where $M\in U(k)$ and $N\in U(l)$. Let $\widetilde M\in U(k)$ and $\widetilde N\in U(l)$ be such that
$\widetilde M^{-1}M \widetilde M$ and $\widetilde N^{-1}N \widetilde N$ are diagonal. In order to conclude this part of the proof we must define
$$
h=
\check F
\left (
\begin{array}{ll}
\widetilde M\\
& \widetilde N\\
\end{array}
\right ).
$$

Now assume that given $\gamma,h\in PU(k,l)$  we know that $\gamma$ preserves each leave of the foliation $h(\mathcal{F}^{k,l})$, then $h^{-1}\gamma h\in PU(k,l)$ and leaves invariant each leave of the foliation  $\mathcal{F}^{k,l}$, therefore $h^{-1}\gamma h$ leaves $\langle\langle e_1,\ldots,e_k\rangle\rangle\subset \H^{k,l}_\C$ invariant. By proposition \ref{p:fixp} we conclude that $\gamma$ is elliptic.  
\end{proof}

Now the following result is easily proven.

\begin{corollary}  \label{neweliffconjofoldeli} 
Let $l\geq k>0$, then  the following fact are equivalent:
\begin{enumerate} 
\item The element  $ \gamma \in  {\rm PSL} (n+1, \C)$ is elliptic, 
\item There is  $h\in PSL(n+1,\C)$ such that $\gamma$ leaves invariant $h(\partial \H^{k,l}_\C)$ and $h(\langle \langle e_1,\ldots, e_k\rangle\rangle)$,
\item There is  $h\in PSL(n+1,\C)$ such that $\gamma$ leaves invariant  each leave of the foliation  $h(\mathcal{F}^{k,l})$.
\end{enumerate}
\end{corollary}

Now we have.

\subsection*{Proof of Theorem \ref{t:e1}}
\begin{proof}
This result is a direct consequence of Corollary 	\ref{neweliffconjofoldeli}.  
\end{proof}
To conclude this subsection let us  describe the Kulkarni's limit set and the equicontinuity set for the cyclic  group generated by an elliptic element.

\begin{theorem}\label{t:ke}
Let $\gamma\in PSL(n+1,\C)$ be an elliptic element, then 
\begin{enumerate}
\item The limit set $\Lambda_{Kul}(\langle \gamma \rangle )$ is either empty or the whole space $\P^n$ according $\gamma$ having  finite or infinite order.
\item The equicontinuity set  $E(\langle \gamma \rangle )=\P^n$.
\end{enumerate}
\end{theorem}

Trivially $\Omega_{Kul}(\langle\gamma\rangle)$ is the largest open set on which $\langle\gamma\rangle$ 
acts properly discontinuously.

\section{Parabolic Transformations}  \label{s:par}
In this section we describe parabolic elements and its  relationship  with $PU(k,l)$. Our interest with other  semi-definite sesquilinear forms arises from the following: recall that in the one and two dimensional case is not hard to see that  parabolic elements are simply those coming from $PU(1,n)$, where $n=1,2$, but as the following example shows, in higher dimensions  there are parabolic elements which are not conjugated to an element in $PU(1,n)$.

\begin{example}
	Let $n\geq 4$ and $\gamma$ be  the projective transformation in $PSL(n+1,\C)$ induced by the matrix
	\[
	\widetilde \gamma=
	\left (
	\begin{array}{lllll}
	1 & 1\\
	0 & 1 \\
	&  &1 & 1\\
	&  &0 & 1 \\
	&  &  &  & I_{n-4}\\
	\end{array}
	\right )
	\]
	where $I_{n-4}$ is the identity matrix if $n>4$ and nothing in other case, then $\gamma$ is a parabolic element which cannot be conjugated to an element in $PU(1,n)$.
\end{example}
\begin{proof}
	A straightforward calculation shows that 
	\[
	\gamma^{\pm m}=
	\left [
	\left [
	\begin{array}{lllll}
	1 & \pm m\\
	0 & 1 \\
	&  &1 & \pm m\\
	&  &0 & 1 \\
	&  &  &  & I_{n-4}\\
	\end{array}
	\right ]
	\right ]
	\xymatrix{ \ar[r]_{m \rightarrow  \infty}&}
	\left [
	\left [
	\begin{array}{lllll}
	0 & 1\\
	0 & 0 \\
	&  &0 & 1\\
	&  &0 & 0 \\
	&  &  &  & 0_{n-4}\\
	\end{array}
	\right ] 
	\right ]
	\]
	in consequence $\P^n\setminus Eq(\langle \gamma\rangle )$ is not an hyperplane, which is not  possible for elements in $PU(1,n)$, see \cite{CNS}. Therefore, $\gamma$ is not conjugate to an element in $PU(1,n)$. 
\end{proof}

So in order to describe parabolic elements we will need to understand the relationship between
  Jordan blocks and Hermitian products.  Let us consider the following functions:

  \begin{definition}
  	Given  $n>2$ and $k,j:1,\ldots n$, let us 
  	define $$\ll i,j\gg^{(+)},\ll i,j\gg^{(-)}:\C^n\rightarrow \Bbb{R}$$ by 
  	\[
  	\begin{matrix}
  	\ll k,j\gg^{(+)} (z_1,\ldots, z_n)=z_k\overline{z_j}+ \overline{z_k}z_j\\
  	\ll k,j\gg^{(-)} (z_1,\ldots, z_n)=i(z_k\overline{z_j}- \overline{z_k}z_j)
  	\end{matrix}
  	\]
  \end{definition}

  Clearly each of this functions define a hermitian quadric form with signature $(1,n-2,1)$.  
   The following results  which  we present without proof will be
   useful. First  let us  consider the following stability  lemma. 
   
   \begin{lemma}[Weyl, see \cite{bha}] \label{weyl}
   	Let $A,B\in M(n,\C)$ be hermitian matrices with eigenvalues $\alpha_1,\ldots, \alpha_n$ and 
   	$\alpha_1,\ldots, \alpha_n$ respectively. Then 
   	\[
   	max_j \vert \alpha_j-\beta_j \vert \leq \parallel A- B\parallel. 
   	\]
   	Where $\parallel \mbox{ } \parallel $ denotes the operator bound norm.
   \end{lemma}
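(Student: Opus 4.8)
The plan is to derive Weyl's perturbation bound from the Courant--Fischer min--max characterization of the eigenvalues of a Hermitian matrix. Throughout I assume, as is implicit in the statement (and without which the inequality is false), that the eigenvalues of each of $A$ and $B$ are listed in matching order, say non-increasing: $\alpha_1\ge\cdots\ge\alpha_n$ and $\beta_1\ge\cdots\ge\beta_n$. Put $E=A-B$, which is again Hermitian, and recall that for a Hermitian $H\in M(n,\C)$ one has $\parallel H\parallel=\max\{\,|\langle Hx,x\rangle|:\parallel x\parallel=1\,\}$; in particular $-\parallel E\parallel\le\langle Ex,x\rangle\le\parallel E\parallel$ for every unit vector $x$ (note $\langle Ex,x\rangle\in\R$ since $E$ is Hermitian).

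First I would recall the min--max formula: for a Hermitian $H\in M(n,\C)$ with ordered eigenvalues $\mu_1\ge\cdots\ge\mu_n$ and every $j\in\{1,\dots,n\}$,
\[
\mu_j=\max_{\dim S=j}\ \min_{\substack{x\in S\\ \parallel x\parallel=1}}\langle Hx,x\rangle ,
\]
the maximum running over all $j$-dimensional subspaces $S\subset\C^n$. This is classical; if a self-contained argument is wanted it follows from the spectral theorem by a dimension count, since any $j$-dimensional subspace must meet the span of the eigenvectors associated with $\mu_j,\dots,\mu_n$.

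Next I would fix $j$ and choose a $j$-dimensional subspace $S_0$ realizing the maximum for $B$, so $\beta_j=\min\{\langle Bx,x\rangle:x\in S_0,\ \parallel x\parallel=1\}$. For every unit $x\in S_0$ we have $\langle Ax,x\rangle=\langle Bx,x\rangle+\langle Ex,x\rangle\ge\langle Bx,x\rangle-\parallel E\parallel$, so taking the minimum over such $x$ and then invoking the min--max formula for $A$ (with $S_0$ as one admissible subspace) gives
\[
\alpha_j\ \ge\ \min_{\substack{x\in S_0\\ \parallel x\parallel=1}}\langle Ax,x\rangle\ \ge\ \beta_j-\parallel E\parallel .
\]
Interchanging the roles of $A$ and $B$ (and using $\parallel E\parallel=\parallel B-A\parallel$) yields $\beta_j\ge\alpha_j-\parallel E\parallel$, hence $|\alpha_j-\beta_j|\le\parallel E\parallel$ for each $j$; taking the maximum over $j$ finishes the proof.

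The argument is essentially routine. The only points needing care are the bookkeeping in the min--max step --- in particular that the two spectra are enumerated in the same monotone order --- and, if one wishes to avoid citing Courant--Fischer, the short dimension-count proof of that identity; I do not anticipate any real obstacle.
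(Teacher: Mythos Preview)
Your argument is correct: the Courant--Fischer min--max characterization is the standard route to this perturbation bound, and your bookkeeping (matching monotone orderings, the two-sided estimate obtained by swapping $A$ and $B$) is handled cleanly. The paper itself does not prove this lemma at all --- it is stated with a reference to Bhatia's book and used as a black box --- so there is no ``paper's own proof'' to compare against; you have simply supplied the classical self-contained argument where the authors chose to cite.
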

   
   And now let us consider the following result.

\begin{lemma} \label{l:sig}
Let $A\in M(n,\C)$ be an invertible  matrix, let us define:
\[
C
=
\left (
\begin{array}{ll}
0 & A\\
 \bar{A}^t & 0\\
\end{array}
\right ),
\]
 then $(x,y)\in \C^n\times \C^n$ is an eigenvector of $C$  with eigenvalue $\lambda $  if an only if $x$ is and  eigenvector of $A  \bar{A}^t$ with eigenvalue $\lambda^2$ and $y=\lambda A x$. 
\end{lemma}

\begin{lemma} \label{l:h1}
If   $n=2k+1$ and  $\mathcal{Q}=\{Q_r\}_{r\in \R}$ is   the family of hermitian quadratic  forms given by:
\begin{scriptsize}
\[
r\ll n,n\gg ^{(+)
}+\sum_{j=0}^{k-1} (-1)^{j+k} \ll 1+j,n-j\gg^{(+)}+\sum_{j=0}^{k-2}\sum_{m=0}^j\sum_{l=0}^m(-1)^{m-j+k}\left( \begin{array}{l} m\\l\end{array} \right ) \ll 
2+j-l,n-j+m\gg^{(+)}  
\]
\end{scriptsize}
\begin{scriptsize}
\[
+\frac{1}{2}\sum_{m=1}^{k}\sum_{j=0}^{m-1}(-1)^{2k-m}\left( \begin{array}{c} m-1\\l\end{array} \right ) \ll k+1-j,k+1+m\gg ^{(+)}  
+\frac{1}{2}\ll k+1,k+1\gg ^{(+)}. 
\]
\end{scriptsize}
Then  the following properties  are valid:
\begin{enumerate}
\item \label{i:sig12}It is verified that  $e_1\in \bigcap_{ Q\in \mathcal{Q}} Q^{-1}(0)$. 
\item \label{i:sig22}For each pair  of distinct elements $r,s\in \Bbb{R}$ it holds that  
$$Q_r^{-1}(0)\cap Q_s^{-1}(0)  \subset \mathcal{L}= \langle \langle {e_1},\ldots, e_{n-1}\rangle \rangle.$$
\item \label{i:sig32}It yields that  $\bigcup_{r\in\R} Q^{-1}_r(0)\setminus \mathcal{L}=\C^n\setminus \mathcal{L}$.
\item \label{i:sig42} Each quadric in $\mathcal{Q}$ has signature $(k,k+1)$.
\item \label{i:sig52}If $A$ is a $n\times n$-Jordan block with proper value 1, then for each $r\in  \R$ we get  $A(Q^{-1}_r(0))=Q^{-1}_r(0)$ .
\end{enumerate} 
\end{lemma}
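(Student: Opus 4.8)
The plan is to exhibit explicitly a change of basis in which the $n\times n$ Jordan block $A$ becomes a companion-type matrix adapted to the family $\mathcal{Q}$, and then verify the five items by direct computation in this new basis. First I would observe that the peculiar-looking coefficients in the definition of $Q_r$ are nothing other than the entries produced by conjugating the standard "anti-diagonal" Hermitian form $\sum_{j}(-1)^{?}\ll j,n+1-j\gg^{(+)}$ (the one for which a single Jordan block is an isometry, cf. the discussion preceding Lemma \ref{l:parwed} and Lemma \ref{l:par11}) by the matrix that conjugates $A$ to its "upper triangular Toeplitz" normal form $\exp(N)$, where $N$ is the nilpotent shift. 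So the first step is to write $A = P\,J\,P^{-1}$ with $J$ the Jordan block and identify $P$; then item \eqref{i:sig52}, namely $A(Q_r^{-1}(0)) = Q_r^{-1}(0)$, reduces to the statement that $J$ preserves the anti-diagonal form of signature $(k,k+1)$, which is the content of Lemma \ref{weyl}'s companion facts together with Lemma \ref{l:sig} (the latter handles exactly the off-diagonal block structure that appears when one symmetrizes an anti-diagonal form).

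Next, for item \eqref{i:sig12}, since $e_1$ spans $\mathrm{Ker}(N)$ (equivalently the unique $A$-eigendirection), in the anti-diagonal model $e_1$ pairs nontrivially only with $e_n$; one checks that the coefficient of $\ll 1,n\gg^{(+)}$ in $Q_r$ vanishes for every $r$, so $e_1 \in Q_r^{-1}(0)$ for all $r$. For item \eqref{i:sig42}, the signature is a conjugation invariant, so it suffices to compute the signature of the anti-diagonal form on $n=2k+1$ coordinates: pairing $e_j$ with $e_{n+1-j}$ gives $k$ hyperbolic planes plus the one extra vector $e_{k+1}$ whose self-pairing coefficient is the $+\tfrac12\ll k+1,k+1\gg^{(+)}$ term, contributing a definite one-dimensional piece; summing signatures yields $(k,k+1)$. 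For items \eqref{i:sig22} and \eqref{i:sig32} I would argue as in Lemma \ref{l:parwed} and Lemma \ref{l:h1}'s analogue for spheres: the forms $Q_r$ differ only in the $\ll n,n\gg^{(+)}$-coefficient $r$ (up to the universally-present remaining terms), so $Q_r - Q_s = (r-s)\ll n,n\gg^{(+)}$; hence $Q_r^{-1}(0)\cap Q_s^{-1}(0)$ forces $z_n\overline{z_n}=0$, i.e. $z_n = 0$, which cuts out precisely $\mathcal{L} = \langle\langle e_1,\dots,e_{n-1}\rangle\rangle$, giving \eqref{i:sig22}; and conversely, given any point with $z_n\neq 0$, solving $Q_r(z)=0$ for $r$ is a single linear equation in $r$ with nonzero leading coefficient $|z_n|^2$, so every point off $\mathcal{L}$ lies on exactly one $Q_r^{-1}(0)$, giving \eqref{i:sig32}.

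The main obstacle I anticipate is purely bookkeeping: producing the explicit conjugating matrix $P$ and verifying that conjugation of the clean anti-diagonal form by $P$ reproduces the triple-sum expression for $Q_r$ verbatim — the binomial coefficients $\binom{m}{l}$ and $\binom{m-1}{l}$ strongly suggest that $P$ (or $P^{-1}$) is the Pascal/binomial matrix coming from $\exp(N)$, so the identity to be checked is a Vandermonde–Pascal convolution identity. Rather than grind this out coordinate by coordinate, I would phrase the whole computation intrinsically: let $\phi\colon \C^n \to \C^n$ be the isomorphism intertwining $A$ with $J$, set $Q^{0} := \phi^*(\text{anti-diagonal form})$, check $Q^0$ is $A$-invariant of signature $(k,k+1)$ with $e_1$ null, and then simply \emph{define} the family as $Q_r = Q^0 + (r - r_0)\,\phi^*(\ll n,n\gg^{(+)})$ for the appropriate base value $r_0$; the five items then follow from the three bullet-points above almost formally, and the explicit formula in the statement is recovered by expanding $\phi^*$ once. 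This reduces the risk of sign errors to a single, well-localized computation.
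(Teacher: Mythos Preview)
Your treatment of items \eqref{i:sig22} and \eqref{i:sig32} is correct and is exactly the ``straightforward calculation'' the paper omits: $Q_r-Q_s=(r-s)\,\ll n,n\gg^{(+)}$ forces $z_n=0$ on the intersection, and for $z_n\neq 0$ one solves linearly for $r$. Your argument for \eqref{i:sig12} contains a slip: the coefficient of $\ll 1,n\gg^{(+)}$ is $(-1)^k$, not zero; what actually makes $e_1$ null is that no term of the form $\ll 1,1\gg^{(+)}$ appears, which is immediate from the index ranges.

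For \eqref{i:sig42} and \eqref{i:sig52} your route genuinely diverges from the paper's. Your key idea --- conjugate the Jordan block $A=I+N$ to $\exp(N)$ and observe that $\exp(N)$ preserves the alternating-sign anti-diagonal Hermitian form --- is sound (one checks $HNH^{-1}=-N^{*}$ when $n$ is odd, whence $\exp(N)^{*}H\exp(N)=H$), and once this is in hand the signature count \eqref{i:sig42} is indeed just $k$ hyperbolic planes plus the diagonal $\tfrac12\ll k{+}1,k{+}1\gg^{(+)}$ contribution. But your attribution of this to Lemmas \ref{weyl} and \ref{l:sig} is misplaced: those lemmas say nothing about invariance. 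The paper instead proves \eqref{i:sig42} by writing the coefficient matrix $C_r$ in block form, using Lemma \ref{l:sig} to read off the signature of a simplified reference matrix $C_{0,0}$, and then invoking Weyl's perturbation bound (Lemma \ref{weyl}) together with $\det C_{r,v}\neq 0$ and connectedness of the parameter space to propagate that signature to every $C_r$. For \eqref{i:sig52} the paper argues by direct induction on $k$, decomposing $H=(-1)^kQ_0$ as $H_1+H_2$ and checking that the extra terms produced by $A$ acting on $H_1$ cancel those from $H_2$. Your conjugation approach is conceptually cleaner and, if carried through, handles \eqref{i:sig42} and \eqref{i:sig52} simultaneously; the paper's approach avoids ever writing down the conjugating matrix $P$ at the cost of two separate ad hoc arguments. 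The residual obligation in your plan is the one you flag yourself: you must still expand $(P^{-1})^{*}H_0P^{-1}$ and match it against the stated triple sum, otherwise you have proved the five items for \emph{some} $A$-invariant family rather than for the specific $\{Q_r\}$ of the lemma --- and that binomial bookkeeping is not noticeably lighter than the paper's induction.
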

\begin{proof}
The proofs of  (\ref{i:sig12}), (\ref{i:sig22}) and (\ref{i:sig32}) are straightforward calculations so we will omit they here.\\

Let us show (\ref{i:sig42}).  A simply inspection reveals that the Hermitian    matrix  $C_r$ associated  to  $Q_r$ has the form:
\[
\left (
\begin{array}{lll}
0         & 0         & A\\
0         & 1        & b\\
\bar{A}^t & \bar{b}^t & B_r\\
\end{array}
\right )
\]
 where $A\in SL(k,\C)$,   $b\in \C^k$ and 
\[
B_r=
\left (
\begin{array}{llll}
0        & \dots & 0      & 0\\
\vdots   &       & \vdots &\vdots\\
0        &\dots  & 0      & 0\\
0        &\dots  & 0      & r\\
\end{array}
\right ).
\]
 Now  let us consider the following hermitian matrix
\[
C_{0,0}=
\left (
\begin{array}{lll}
0         & 0         & A\\
0         & 1         & 0\\
\bar{A}^t & 0         & 0\\
\end{array}
\right ),
\]
then lemma \ref{l:sig} yields that $C_{0,0}$ signature $(k,k+1)$.  Consider the  family  of hermitian  matrices $\{C_{{r,v}}:(r,v)\in \R\times \C^k \}$ given by
\[
C_{r,v}=
\left (
\begin{array}{lll}
0         & 0          & A\\
0         & 1          &v\\
\bar{A}^t & \bar {v}^t & B_r\\
\end{array}
\right )
\]
A straightforward calculation shows that $det(C_{r,v})\neq 0$, thus  
 Lemma \ref{weyl}, yields  that the sets $$U_{l,m}=\{(r,v)\in \Bbb{R}\times \C ^k: C_{r,v} \textrm{ has signature } (l,m)\}$$  form an open cover of disjoint sets for $\R \times \C^k$. Since $\R\times \C^k$ is connected we conclude that $U_{k,k+1}=\Bbb{R}\times \C^k$, which concludes the proof.\\

 Let us show (\ref{i:sig52}). Clearly it is enough to show that $H=(-1)^{k}Q_0$ is invariant under  $A$. The proof is by induction on $k$. If $k=1$, then 
\begin{small}
$$
\begin{array}{ll}
A H &=((z_1+z_2)\bar{z}_3+z_3(\bar{z}_1+\bar{z}_2))-\vert z_2+z_3 \vert^2+\frac{1}{2}((z_2+z_3)\bar{z}_3+z_3(\bar{z}_2+\bar{z}_3))\\	
&= (z_1\bar{z}_3+z_3\bar{z}_1)+(z_2\bar{z}_3-z_3\bar{z}_2) -\vert z_2\vert^2 -\vert z_3\vert^2-(z_2\bar{z}_3+z_3\bar{z}_2)+\frac{1}{2}(z_2\bar{z}_3+z_3\bar{z}_2)+\vert z_3\vert^2 \\	
&=H
\end{array}
$$  
\end{small}    
Let us prove the case $k_0=k$. Trivially,  we can write down $H=H_1+H_2$ where:
\begin{small}
\[
H_1=\sum_{j=1}^{k-1} (-1)^{j} \ll 1+j,n-j\gg^{(+)}+\sum_{j=1}^{k-2}\sum_{m=0}^{j-1}\sum_{l=0}^m(-1)^{m-j}\left( \begin{array}{l} m\\l\end{array} \right ) \ll 
2+j-l,n-j+m\gg^{(+)} 
\]
\[
+\frac{1}{2}\sum_{m=1}^{k-1}\sum_{j=0}^{m-1}(-1)^{2k-m}\left( \begin{array}{c} m-1\\l\end{array} \right ) \ll k+1-j,k+1+m\gg ^{(+)}  
+\frac{1}{2}\ll k+1,k+1\gg ^{(+)}. 
\]
\end{small}
\begin{scriptsize}
\[
H_2= \ll 1,n \gg^{(+)}+\sum_{j=2}^k\sum_{l=0}^{j-2}\left( \begin{array}{c} j-2\\l\end{array} \right )\ll l+2,n\gg^{(+)}+\frac{1}{2}\sum_{j=0}^{k-1}\left( \begin{array}{c} k-1\\l\end{array} \right ) \ll 2+j,n\gg ^{(+)}  .\\
\]
\end{scriptsize}
By applying  the inductive to $H_1$ we get 
\begin{scriptsize}
\[
AH_1=H_1-\ll 2,n\gg^{(+)}- \sum_{j=2}^k\sum_{l=0}^{j-2}\left( \begin{array}{c} j-2\\l\end{array} \right )\ll l+3,n\gg^{(+)}-\frac{1}{2}\sum_{j=0}^{k-1}\left( \begin{array}{c} k-1\\l\end{array} \right ) \ll 3+j,n\gg ^{(+)}
\]
\end{scriptsize}
 a simple calculation shows that: 
 \begin{scriptsize}
\[
AH_2=H_2+\ll 2,n\gg^{(+)}+ \sum_{j=2}^k\sum_{l=0}^{j-2}\left( \begin{array}{c} j-2\\l\end{array} \right )\ll l+3,n\gg^{(+)}+\frac{1}{2}\sum_{j=0}^{k-1}\left( \begin{array}{c} k-1\\l\end{array} \right ) \ll 3+j,n\gg ^{(+)}
\]
\end{scriptsize}
To conclude is enough to observe that $AH=AH_1+AH_2=H_1+H_2$. 
\end{proof}

Through similar arguments one can show

\begin{lemma} \label{l:h2}
If   $n=2k$ and  $\mathcal{Q}=\{Q_r\}_{r\in \R}$ is   the family of hermitian quadratic  forms given by:

\begin{scriptsize}
\[
r\ll n,n\gg^{(+)}  +\sum_{j=0}^{k-1} (-1)^{j} \ll 1+j,n-j\gg^{(-)}+\sum_{j=0}^{k-2}\sum_{m=0}^j\sum_{l=0}^m(-1)^{m-j}\left( \begin{array}{l} m\\l\end{array} \right ) \ll 
2+j-l,n-j+m\gg^{(-)}.  
\]
\end{scriptsize}
Then the following properties hold:
\begin{enumerate} 
\item \label{i:sig1} It is verified that  $e_1\in \bigcap_{ Q\in \mathcal{Q}} Q^{-1}(0)$. 
\item \label{i:sig2} For each pair  of distinct elements $r,s\in \Bbb{R}$ it holds that  
$$Q_r^{-1}(0)\cap Q_s^{-1}(0)  \subset  \mathcal{L}=\langle \langle {e_1},\ldots, e_{n-1}\rangle \rangle.$$
\item \label{i:sig3} It yields that  $\bigcup_{r\in\R}Q^{-1}_r(0)\setminus \mathcal{L}=\C^n\setminus \mathcal{L}$.
\item \label{i:sig4} Each quadric in $\mathcal{Q}$ has signature $(k,k)$.
\item  \label{i:sig5} If $A$ is the $n\times n$-Jordan block being  $1$ its proper value, then  for each $r\in \Bbb{R}$ we have $A(Q^{-1}_r(0))=Q^{-1}_r(0)$.
\end{enumerate} 
\end{lemma}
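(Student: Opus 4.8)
The plan is to follow, essentially line for line, the proof of Lemma \ref{l:h1}: the even-dimensional family $\mathcal{Q}$ is assembled from the skew combinations $\ll i,j\gg^{(-)}$ weighted by $i$ precisely so that the bookkeeping of the odd case goes through unchanged, the one structural difference being that the lone diagonal entry $1$ of the odd case is here replaced by a $0$, which is exactly what makes the signature balanced. Items (\ref{i:sig1})--(\ref{i:sig3}) reduce, after writing $Q_r$ out in coordinates, to elementary index manipulations: every monomial $z_i\bar z_j$ with $i,j\le n-1$ carries a coefficient independent of $r$, and the only $r$-dependent term is $r\ll n,n\gg^{(+)}=r|z_n|^2$. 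Hence $e_1$ annihilates every summand (each summand involves an index $>1$ in one of its factors), so $e_1\in\bigcap_{Q\in\mathcal{Q}}Q^{-1}(0)$; two quadrics with $r\ne s$ can differ only on $\{z_n\ne0\}$, so on $Q_r^{-1}(0)\cap Q_s^{-1}(0)$ one has $z_n=0$ and the common zero locus lies in $\mathcal{L}$; and any prescribed value of the $r$-free part of $Q$ can be absorbed by choosing $r$ appropriately whenever $z_n\ne0$, which gives (\ref{i:sig3}).

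For (\ref{i:sig4}) I would inspect the coefficient matrix $C_r$ of $Q_r$. As in the odd case it has the block form
\[
C_r=\left(\begin{array}{ccc}
0 & 0 & iA\\
0 & 0 & b\\
\overline{iA}^{t} & \bar b^{t} & B_r
\end{array}\right),
\]
where $A\in SL(k,\C)$ is the explicit matrix built from the alternating binomial coefficients, $b\in\C^{k}$, and $B_r$ is the matrix with $r$ in the lower-right corner and zeros elsewhere; the central block $0$, in place of the $1$ appearing in Lemma \ref{l:h1}, is what drops the signature from $(k,k+1)$ to $(k,k)$. Taking $b=0$ and $r=0$ gives $C_{0,0}=\bigl(\begin{smallmatrix}0&iA\\ \overline{iA}^{t}&0\end{smallmatrix}\bigr)$, and since $(iA)\overline{(iA)}^{t}=A\bar A^{t}$ is invertible, Lemma \ref{l:sig} applied with $iA$ in the role of $A$ shows $C_{0,0}$ has signature $(k,k)$. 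Letting $C_{r,v}$ denote the family obtained by replacing $b$ by an arbitrary $v\in\C^{k}$, a determinant computation gives $\det C_{r,v}\ne0$ for every $(r,v)$, so Lemma \ref{weyl} makes the sets $U_{l,m}=\{(r,v):C_{r,v}\text{ has signature }(l,m)\}$ open and pairwise disjoint; they cover the connected space $\R\times\C^{k}$ and $C_{0,0}\in U_{k,k}$, hence $U_{k,k}=\R\times\C^{k}$, which is (\ref{i:sig4}).

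Finally, for (\ref{i:sig5}), the invariance of each $Q_r$ under the $n\times n$ Jordan block $A$: since $A$ does not move the coordinate $z_n$ it fixes $r\ll n,n\gg^{(+)}$, so it suffices to show that $H:=Q_0$ is $A$-invariant, and this I would prove by induction on $k$ exactly as in Lemma \ref{l:h1}. One writes $H=H_1+H_2$, where $H_1$ is the inner part of the triple sums (a reindexed copy of the form for $k-1$) and $H_2$ collects the boundary terms; the inductive hypothesis gives $AH_1=H_1-R$ for an explicit correction $R$, a direct expansion of the action of $A$ on the coordinates gives $AH_2=H_2+R$, and adding yields $AH=H$; the base case $k=1$ is the immediate $2\times2$ check for $i(z_1\bar z_2-\bar z_1 z_2)$ under $\bigl(\begin{smallmatrix}1&1\\0&1\end{smallmatrix}\bigr)$. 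I expect the only real obstacle to be purely combinatorial: matching the index shifts so that the corrections produced by $AH_1$ and $AH_2$ are literal negatives of one another, and verifying that the factor $i$ together with the skew forms $\ll\cdot,\cdot\gg^{(-)}$ keeps every coefficient matrix hermitian, so that the word ``signature'' remains meaningful. Conceptually nothing new arises beyond Lemma \ref{l:h1}; the even parity merely removes the single diagonal $1$, forcing the balanced signature $(k,k)$.
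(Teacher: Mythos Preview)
Your approach is exactly what the paper intends (it merely writes ``Through similar arguments one can show'' and gives no separate proof), and your treatment of items (\ref{i:sig1})--(\ref{i:sig3}) and (\ref{i:sig5}) is correct and parallels Lemma~\ref{l:h1} faithfully.

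One small correction is needed in (\ref{i:sig4}). In the even case $n=2k$ there is no middle index $k+1$, so the coefficient matrix of $Q_r$ has the simpler $2\times2$ block shape
\[
C_r=\begin{pmatrix} 0 & iA\\[2pt] \overline{iA}^{\,t} & B_r\end{pmatrix},
\]
with $A\in GL(k,\C)$ and $B_r$ the $k\times k$ matrix carrying $r$ in its bottom-right corner; there is no vector $b$ to vary, so the family $C_{r,v}$ you introduce is not well defined. The remedy is that the argument becomes easier, not harder: one checks $\det C_r=(-1)^k\lvert\det A\rvert^{2}\neq0$ for every $r$, Lemma~\ref{l:sig} applied to $iA$ gives signature $(k,k)$ at $r=0$, and then Lemma~\ref{weyl} together with connectedness of $\R$ (rather than $\R\times\C^{k}$) yields the same signature for all $r$. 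With this adjustment your proof is complete and coincides with the route the paper has in mind.
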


Clearly, we get similar 
  results if we replace the matrix $A$ by any  Jordan-block with an  unitary eigenvalue. Now, let us provide a characterization of parabolic elements in terms of foliations by $(k,l)$-spheres, the following  will be helpful.

  \begin{lemma} \label{l:parwed}
  	Let $\gamma\in PSL(n+1,\C)$ be an element such that $\gamma$ has a lift $\widetilde \gamma$ such
  	that $\widetilde\gamma$ is a $(n+1)\times( n+1)$-Jordan block being 1 its unique eigenvalue, then
  	\begin{enumerate}
  		\item \label{i:parwed1} For each $n> k\geq  0$ the action of $\left [\bigwedge_k\widetilde\gamma\right]$ on $\Bbb {P}(\bigwedge^{k}_{j=1} \C^{n})$ has a unique fixed point in $\iota(Gr(k-1,n))$ namely
  		$$[e_1\wedge e_2\wedge \cdots \wedge e_k].$$
  		\item \label{i:parwed2} For each
  		$\ell\in Gr(k,n)$  we know  $\gamma^m(\ell)
  		\xymatrix{ \ar[r]_{m \rightarrow \infty}&} \langle\langle [e_1],\ldots,[e_{k+1}]\rangle\rangle$.
  		
  		\item \label{i:parwed2.5} It is verified that $\P^{n-1}\setminus Eq(\langle \gamma\rangle )=\langle\langle [e_1],\ldots,[e_{n}]\rangle\rangle$.
  		
  		\item \label{i:parwed3} It is verified that $\Lambda_{Kul}(\langle \gamma\rangle )=\langle\langle [e_1],\ldots,[e_n]\rangle\rangle$.
  		\item \label{i:parwed4} If $\gamma$ is conjugated to an element in $PU(k,l)$, then $k=floor((n+1)2^{-1})$ and $l=ceiling((n+1)2^{-1})$, here $floor(x)$ denote the largest integer less than or equal to $x$  and  $ceiling(\cdot)$ is the smallest integer greater than or equal to $x$.
  		  	\end{enumerate}
  \end{lemma}
  \begin{proof}
  	 Let us show (\ref{i:parwed1}). Let us make the	 proof by induction on $n$. For $n=1$, we are simply considering the action of
  	$
  	\left(
  	\begin{array}{ll}
  	1 &1\\
  	0 & 1\\
  	\end{array}\right)$
  	on $\bigwedge^{1} \C^{1+1}=\C^2$, which trivially has a unique fixed point, thus in this case the proof is trivial. Now let us proceed to check the case $n+1$. Let $\mathcal{L}$ be a $k$-dimensional projective invariant under the action of $\widetilde \gamma$, from standard linear algebra we may can assume that $k>0$ . Then $\mathcal{W}=\mathcal{L}\cap \langle\langle [e_1],\ldots, [e_{n_0}]\rangle\rangle$ is a projective subspace, whose dimension is either $k-1$ or $k-2$, invariant under $\widetilde \gamma$. By the inductive hypothesis we conclude that $\langle\langle [e_1],\ldots,[e_{k-1}]\rangle\rangle\subset \mathcal{L} $. Therefore, given $p\in \mathcal{L}\setminus \langle\langle [e_1],\ldots,[e_{k-1}]\rangle\rangle$ we have:
  	\[
  	p=\alpha_1 e_1+\ldots \alpha_{n+1} e_{n+1}
  	\]
  	Let $m=max\{j=1,\ldots n+1:\alpha_j\neq 0  \}$, then $m>k-1$. The set  $\{p,\widetilde{ \gamma}(p),\ldots, \widetilde{ \gamma}^{m-1}(P)\}$ is linearly independent, this can be viewed from:
  
  	\[
  	\begin{vmatrix}
  	\alpha_1      & \alpha_1 +\alpha_2& \ldots & \alpha_{1}+\alpha_m  \\
  	\alpha_2     & \alpha_2 +\alpha_3 & \ldots  & \alpha_2+0\\
  	\vdots        & \vdots                       &  \ddots & \vdots\\
  	\alpha_{m} & \alpha_m +0                 &                & \alpha_m +0\\
  	\end{vmatrix}
  	=
  	\begin{vmatrix}
  	\alpha_1      & \alpha_2& \ldots & \alpha_m  \\
  	\alpha_2     & \alpha_3 & \ldots  & 0\\
  	\vdots        & \vdots                       &  \ddots & \vdots\\
  	\alpha_{m} &  0                 &                & 0\\
  	\end{vmatrix}
  	\neq 0
  	\]
  That is $m\leq k$, which  concludes this part of the proof.

  	The proof of (\ref{i:parwed2}) follows from part (\ref{i:parwed1}) of the present Lemma and Theorem \ref{l:lila}.
  	The proof of (\ref{i:parwed2.5}) is trivial from Section \ref{s:equi}.
  	The proof of (\ref{i:parwed3}) follows from parts (\ref{i:parwed1}), (\ref{i:parwed2.5})  of the present Lemma and Theorem  \ref{p:pkg}.\\
  	
  Finally let us shows (\ref{i:parwed4}).   	Let us suppose that $\gamma$ is conjugate to an element in $PU(k,l)$ with $l-k\geq 2.$
	Let $(\gamma_{m})_{m\in\mathbb{N}}$ a sequence of different elements of $\Gamma=\langle \gamma\rangle.$ Taking  the polar unitary decomposition of  $\gamma_m$ we get 
	\begin{equation}
	\gamma_m = u_m\begin{bmatrix}
	D_1^{(m)} & 0 & 0 \\ 
	0 & I_{l-k} & 0 \\ 
	0 & 0 & D_2^{(m)}
	\end{bmatrix} v_m,
	\end{equation}
	with $D_1=Diag(a_{1,m}, a_{2,m}, \cdots, a_{k,m}),$ $D_2=Diag(a_{k,m}^{-1}, a_{k-1,m}^{-1}, \cdots, a_{1,m}^{-1})$,  $a_{i,m}\geq a_{j,m}\geq 1$ if $i>j$, $I_{l-k}$ is the identity in $GL(l-k,\mathbb{C})$ and  $u_m, v_m$ are unitary matrices.  Without lost of generality, let us assume that $a_{k,m} \xymatrix{ \ar[r]_{m \rightarrow  \infty}&}\infty$.	
	Taking a subsequence $(\gamma_{n_m})$ of $(\gamma_m)$,  if necessary, we may assume that there are  $u,v$ unitary matrices with $u_{n_m} \xymatrix{ \ar[r]_{m \rightarrow  \infty}&} u$ and $v_{n_m} \xymatrix{ \ar[r]_{m \rightarrow  \infty}&}v.$
	Let us define the following subspaces
	 \[
	 \begin{matrix}
	 J_1:=u(\langle\langle e_{k+1},e_{k+2},\cdots,e_l\rangle\rangle),\\
	 J_2:=v^{-1}(\langle\langle e_{k+1},e_{k+2},\cdots,e_l\rangle\rangle),\\
	 J_3:=u(\langle\langle e_1 ,e_2 ,\cdots,e_k \rangle\rangle),\\
	 J_4:=v^{-1}(\langle\langle e_1 ,e_2 ,\cdots,e_k \rangle\rangle)
	 \end{matrix}
	 \]
	  and consider  the function 
	\[\begin{array}{cccc}
	\phi: & J_1 & \rightarrow & J_2 \\
	& p & \mapsto & uv(p).
	\end{array}\]
	Let   $\mathcal{L}\subset \mathbb{P}^n_{\mathbb{C}}$ be any $k$ dimensional projective subspace satisfying $J_4\subset \mathcal{L}\subset \langle\langle J_4, J_2\rangle\rangle,$ then there is a element $p\in J_2$ such that $\mathcal{L}=\langle\langle J_4,p\rangle\rangle.$
If  $p=v^{-1}\left [\sum_{j=k+1}^l x_{j}e_j\right ]$, define 
	$$x_m=v^{-1}([a_{1,n_m}^{-2}x_1:a_{2,n_m}^{-2}x_2:\cdots:a_{k,n_m}^{-2}x_k:x_{k+1}:\cdots:x_l:\cdots:0]),$$ clearly $(x_m)_{m\in\mathbb{N}}\subset \mathcal{L}$  and  
	 \begin{equation}\label{e:alex}
	\gamma_{n_m}(x_{n_m}) \xymatrix{ \ar[r]_{m \rightarrow  \infty}&} \phi(p).
	\end{equation}
Since  $(\gamma_{n_m}(J_4))$ converge to $J_3$, the      Grassmannian is compact  and Equation \ref{e:alex}, we deduce   that $\gamma_{n_m}(\mathcal{L})  \xymatrix{ \ar[r]_{m \rightarrow  \infty}&} \langle\langle J_3,\phi(q)\rangle\rangle,$ however this is a contradicts part (\ref{i:parwed2}) of the present lemma,  so $l-k<2.$
	\end{proof}
	The last part of the previous lemma is telling us that the Hermitian metric constructed in  Lemmas  \ref{l:h1} and \ref{l:h2} , is ``essentially" unique. Now we can show: 
\subsection*{Proof of Theorem \ref{t:p1}}
\begin{proof}
	Let $(k, \{V_j\}_{j=1}^k, \{\gamma_j\}_{j=1}^k)$ be  a Jordan decomposition for $\widetilde \gamma$.  For each $j:1,\ldots k$ and $r\in \Bbb{R}$,  let $Q_{j,r}$ be   the hermitian quadratic form in $V_j$ given  by: 
	\begin{scriptsize}	
	\[
	Q_{j,r}=
	\left  \{
	\begin{array}{ll}
	\textrm{The   corresponding quadratic  given   by Lemmas  \ref{l:h1} and   \ref{l:h2} } & \textrm{ if } \gamma_j \textrm{ is a Jordan block}\\
	\textrm{The  quadric induced by  } e^r Id_{V_j}  &\textrm{ if } \gamma_j  \textrm{ is a diagonal matrix}
	\end{array}
	\right.
	\]
	\end{scriptsize}
	Let $(k_i,l_i)$ be the signature of the quadric $Q_{j,r}$.
	Then  $T_r=\bigoplus_{j=1}^k Q_{j,r}$ is a family of $\gamma$-invariant  hermitian quadratics each of one has signature 
	$\left (\sum_{j=1}^k k_j,\sum_{j=1}^k l_j \right )$. For each  $j$ define 
	\[
	Z_j=\left 
	\{
	\begin{array}{ll}
	\textrm{The unique fixed point of } \gamma_j,  \textrm{ if } \gamma_j  \textrm{ is  non-diagonalizable}\\
	\emptyset  \textrm{ in other case}\\   
	\end{array}
	\right.
	\]
	\[
	K_j=\left 
	\{
	\begin{array}{ll}
	\textrm{The unique  fixed hyperplane  of } \gamma_j,  \textrm{ if } \gamma_j  \textrm{ is  non-diagonalizable}\\
	V_j\textrm{ in other case}\\   
	\end{array}
	\right.
	\]
	Define $\mathcal{Z}=\left \langle \left \langle \bigcup_{j=i}^k Z_j\right \rangle \right  \rangle $ and $\mathcal{K}=\left \langle\left  \langle \bigcup_{j=i}^k H_j\right \rangle\right  \rangle $, then $\mathcal{Z}^\bot=\mathcal{K}$, $\gamma\vert_ \mathcal{Z}^\bot$ is non-loxodromic, 	$Fix(\gamma)\subset \mathcal{Z}^\bot$, $ \bigcup_{T\in \mathcal{F}} T\setminus \mathcal{Z}^\bot=\P^n\setminus \mathcal{Z}^\bot$ and whenever $T_{1},  T_{2}$ are different quadratics in $T_r$ we have   $\emptyset \neq \mathcal{ Z}\subset T_1\cap T_2\subset \mathcal{Z}^\bot $.	Which concludes this part of the  proof.\\
	
	Let us show the reciprocal proposition. Let $\gamma\in PSL(n+1,\C)$ and   $k,l\in \Bbb{N}$,       $\mathcal{F}$ a family of $\gamma$-invariant $(k,l)$-spheres and $\mathcal{Z}$ a projective  subspace of $ \Bbb{P}^n_{\Bbb{C}}$ satisfying   the items of this Theorem.\\

	Claim  1.- The element $\gamma$ can not be elliptic. On the contrary, let us assume that $\gamma$ is elliptic, by Proposition  \ref{p:eli} there is a set $V\subset \H^{k,l}_\C$ with $k$ elements  and a set $W\subset \P^n\setminus  \H^{k,l}_\C$ with $l$ elements such that  each point in $V \cup W$  is fixed by  $\gamma$ and $\langle \langle V\cup W \rangle \rangle=\P^n $, which is a contradiction since $Fix(\gamma)\subset \mathcal{Z}^{\bot}$.\\
	
	Let us assume that $\gamma$ is loxodromic,  by results in section \ref{s:equi}   there are  pseudo-projective transformations $\tau,\rho$   and a sequence $(n_m)_{m\in \Bbb{N}}\subset \Bbb{N}$ such that $\gamma^{n_m} \xymatrix{ \ar[r]_{m \rightarrow  \infty}&} \tau$, $\gamma^{-n_m} \xymatrix{ \ar[r]_{m \rightarrow  \infty}&} \rho$,  $Im(\rho)\cap Im(\tau)=\emptyset$, $Im(\rho),Im(\tau)$ are $\gamma$-invariant projective sub spaces and $\gamma\mid_{Im(\rho)},\gamma\mid_{Im(\tau)}$ are elliptic, thus $Im(\rho)\cup Im(\tau)\subset\mathcal{Z}^\bot$.  On the other hand, given  $p\in \Bbb{P}^{k+l-1}_{\Bbb{C}}\setminus (Ker(\rho)\cup Ker(\tau)\cup\mathcal{Z}^\bot)$, we get  
	$\gamma^{n_m}(p) \xymatrix{ \ar[r]_{m \rightarrow  \infty}&} \tau(p)\in \mathcal{Z}^\bot$ and $\gamma^{-n_m}(p) \xymatrix{ \ar[r]_{m \rightarrow  \infty}&} \rho (p)\in \mathcal{Z}^\bot$, therefore $\gamma$ restricted to $\mathcal{Z}^\bot$ is  loxodromic element, which is a contradiction.
\end{proof}

The following example shows a  non parabolic transformation preserving a family of invariant horospheres sharing a tangent hyperplane.

\begin{example}\label{e:parfix}
	Consider the transformation given by:
	\[
	\tau=
	\begin{pmatrix}
	i \\
	&i\\
	&&-1
	\end{pmatrix},
	\]
	for each $r\in \Bbb{R}$  define the sphere:
	$$
	C_r
	=
	\{
	[z_1, z_2, z_3]: 
	r\vert z_2\vert ^2 + \vert z_3\vert^2 + 
	\overline{z_1}z_2 +z_1\overline{z_2} = 0, r \in \Bbb{R}
	\}
	$$  and let $\mathcal{F}=\{C_r:r\in \Bbb{R}\}$. An easy computation prove that $\mathcal{F}$ is a family satisfying:
	\begin{enumerate} 
		\item Each element of $\mathcal{F}$ is a $(1,2)$-sphere which is $\tau$-invariant.
		\item We have  $C_r\cap C_s=[e_1]$ whenever $s\neq r$.
		\item Also $\bigcup\mathcal{F}\setminus {[e_1]}=\Bbb{P}^2_{\Bbb{C}}\setminus\overleftrightarrow{e_1,e_3}$
	\end{enumerate}
\end{example}
\subsection{The  limit set for parabolic elements}
In this part of the article we will describe the Kulkarni's limit set for cyclic groups generated by a parabolic element. Let us start with a couple of  examples which shows us that the Kulkarni's discontinuity set can  or  cannot agree  with the equicontinuity  set.
  
\begin{example}\label{e:ekdp1}
	Consider first  the matrix $S$:
	\begin{displaymath}
	S=
	\left (
	\begin{array}{ll}
	B& 0\\
	0 & B \\
	\end{array}
	\right )
	\end{displaymath}
	where $B$ is $k\times k$-Jordan block:
	\begin{displaymath}
	B=
	\left (
	\begin{array}{cccccc}
	1 & 1 & 0 \\
	0 & 1 & 1 \\
	0 & 0 & 1\\
	\end{array}
	\right )  \;.
	\end{displaymath}
	Then one can check
	$Eq(\langle S \rangle)=\Omega_{\rm Kul} (\langle S \rangle)$ and their complement consists of the projective space generated by the set  $ \{e_1,e_2, e_4,e_5\}$. 
\end{example}
\begin{example} \label{e:ekdp2}
	Now let 
	\begin{displaymath}
	C=
	\left (
	\begin{array}{ccc}
	B & 0\\
	0 & D \\
	\end{array}
	\right )\;,
	\end{displaymath}
	where  $B$ is as above and $D$ is:
	
	\begin{displaymath}
	D= \left (\begin{array}{cccc}
	1 & 1 & 0 & 0\\
	0 & 1 & 1 & 0\\
	0 & 0 & 1 & 1\\
	0 & 0 & 0 & 1\\
	\end{array}
	\right ) \;.
	\end{displaymath}
	In this case one has  that $\Omega_{\rm Kul} (\langle C \rangle)$ is the complement of the projective space generated by the set $ \{e_1,e_2,e_4, e_5,e_6\}$, while the  complement of the equicontinuity region is the projective space generated by the set   $\{e_1,e_2,e_3,e_4, e_5,e_6\}$. 
\end{example}

The previous examples are telling  us that in order to understand the Kulkarni's limit set we should be interested  when the transformation is made of two blocks, {\it  i. e. } we should consider the following results:

\begin{lemma} \label{l:par11}
Let $A\in GL(k,\C)$ be a diagonal matrix such that each of its proper values  is a unitary complex number,  $B\in GL(l,\Bbb{C})$ be a $l\times l$-Jordan block such that $1$ is its proper value  and    $\widetilde \gamma\in GL(k+l,\C)$ be given by: 
\[
\widetilde \gamma=
\left ( 
\begin{array}{ll}
A & 0\\
0 &  B\\
\end{array}
\right ).
\]
 If $\gamma=[[\widetilde\gamma]]$, then:
  \[
\Omega_{Kul}(\langle  \gamma \rangle)=Eq(\langle  \gamma \rangle)=
\Bbb{P}^{k+l-1}_\Bbb{C}\setminus\langle \langle[e_1],\ldots,[e_{k+l-1}]\rangle \rangle
\]
\end{lemma}
\begin{proof}
The proof is by  induction on $l$. For $l=2$ we get that 
\[
\widetilde {\gamma}=
\left (
\begin{array}{lll}
B &   &\\
  & 1 & 1 \\
  & 0 & 1\\
 \end{array}
\right ),
\]
for which the   claim is trivial. Let us show the claim in the case $l+1$. Set  $\langle \langle [e_1], \ldots, [e_{k+l-1}] \rangle\rangle=\mathcal{L}_1$,  it will be enough to show that for $z\in\mathcal{L}_1\setminus  \langle \langle [e_1], \ldots, [e_{k}] \rangle\rangle$,  it holds that $\langle \langle z, e_{k+l}\rangle \rangle \subset \Lambda_{Kul}(\langle \gamma\rangle )$. Applying  the inductive hypothesis  to $ \gamma$ restricted to 
$\mathcal{L}_2=\langle \langle [e_1], \ldots, [e_{k+l}] \rangle\rangle$,  we conclude that $\Lambda_{Kul}(\langle \gamma\vert_{\mathcal{L}_2}\rangle)=\mathcal{L}_1$. Thus there is a sequence $(z_{1m})\subset \mathcal{L}_2$ such that the cluster points of $(z_{1m})$ lies on $\P^{k+l}\setminus \Lambda( \langle \gamma   \rangle) $, and  $$\gamma^m (z_{1m})  \xymatrix{ \ar[r]_{m \rightarrow  \infty}&} z. $$
Since $\mathcal{L}_2$ is compact we can assume $z_{1m}  \xymatrix{ \ar[r]_{m \rightarrow  \infty}&} z_1$, where $z_1=[v_1,\ldots,v_{k+l},0]$
 and $\sum_{j=1}^l \vert v_{k+j} \vert^2\neq 0 $. On the other hand, let $\langle\langle [e_{k+1}], \ldots, [e_{k+l+1}]\rangle\rangle=\mathcal{L}_3$ and $M\subset \mathcal{L}_3$ a projective subspace of dimension $l-1$  such that $[e_{k+1}],\left [\sum_{j=1}^lv_{k+j} e_{k+j} \right ]\notin \mathcal{M} $,  by applying  Lemma 
\ref{l:parwed},  to $\gamma$ restricted to $\mathcal{L}_3$ and $\mathcal{M}$ we conclude that there is a sequence $(z_{2m})\subset  \mathcal{M}$ such that: 
$$\gamma^m (z_{2m})  \xymatrix{ \ar[r]_{m \rightarrow  \infty}&} [e_{k+l}]. $$ 
Clearly we can assume that $z_{1m}\neq z_{2m}$ for each $m$. Define $\ell_m=\langle \langle k_{1m},k_{2m}\rangle \rangle $, thus 
 $$
 \gamma^m(\ell_m)  \xymatrix{ \ar[r]_{m \rightarrow  \infty}&} \langle \langle z,e_{k+l} \rangle \rangle,
 $$
in order to conclude the proof we must observe  that the cluster points of $\bigcup_{m\in \Bbb{N}} \ell_m$ do not intersect $ \Lambda(\langle \gamma\rangle)$, which concludes the proof.
\end{proof}

\begin{lemma} \label{l:par22}
Let   $B\in GL(l,\Bbb{C})$ be a $l\times l$-Jordan block such that $1$ is its proper value  and    $\widetilde \gamma\in GL(2l,\C)$ be given by: 
\[
\widetilde \gamma=
\left ( 
\begin{array}{ll}
B & 0\\
0 &  B\\
\end{array}
\right ).
\]
If $\gamma=[[\widetilde\gamma]]$, then:
\[
\Omega_{Kul}(\langle  \gamma \rangle)=Eq(\langle  \gamma \rangle)=
\Bbb{P}^{k+l-1}_\Bbb{C}\setminus\langle \langle[e_1],\ldots,[e_{l-1}], [e_{l+1}],\ldots,[e_{2l-1}]\rangle \rangle
\] 
\end{lemma}

\begin{proof} Clearly 
$$
Eq(\langle \gamma\rangle)=\Bbb{P}^n_\Bbb{C}\setminus
\langle \langle[e_1],\ldots,[e_{l-1}], [e_{l+1}],\ldots,[e_{2l-1}]\rangle \rangle
$$
 By  Theorems \ref{t:equi} and \ref{p:pkg} to conclude the result will be enough to show that 
\[
W=
\langle \langle[e_1],\ldots,[e_{l-1}], [e_{l+1}],\ldots,[e_{2l-1}]\rangle \rangle
\subset L_2(\langle \gamma\rangle).
\]
  Let 
 $\ell_1\subset\langle  \langle[e_1],\ldots,[e_{l}]\rangle\rangle \setminus\{[e_1]\}$  and $\ell_2\subset\langle  \langle[e_{l+1}],\ldots,[e_{2l}]\rangle\rangle \setminus\{[e_{l+1}]\}$ be projective subspaces of dimension $l-2$. Define 
$
\mathcal{L}=\left \langle \left \langle \ell_1\cup  \ell_2 \right \rangle \right \rangle,
$  
clearly  $\mathcal{L}\cap \Lambda(\langle \gamma\rangle )=\emptyset$. By Lemma \ref{l:parwed} we get 
\[
\gamma^m(\mathcal{L}) 
\xymatrix{ \ar[r]_{m \rightarrow  \infty}&} 
W.
\]
Which concludes the proof.
\end{proof}

In order  to determine the limit set for parabolic transformation the previous analysis is not  enough, we need to introduce the following notions.
\begin{definition}
Let  $M\in SL(n+1,\Bbb{C})$ and  $\lambda_1\geq \lambda_2 \geq\ldots \geq \lambda_n >0 $ be the proper values of  $M\bar{M}^t$, we define the    $(k,p)$-Ky Fan norm, see \cite{kyfan}, by:

\[
\vert M\vert_{k,p}=\left (\sum_{j=1}^k  \lambda_j^{\frac{p}{2}} \right )^{\frac{1}{p}}  
\]

\end{definition}

 \begin{lemma}\label{l:dym} [See Lemma 10.9 in \cite{dym}]
 Let   $A\in SL(n+1,\Bbb{C})$, then:
 \[
 \vert A \vert_{1,1}^2 (\vert A \vert_{2,1}-\vert A \vert_{1,1})^2 
 =
 max
 \{
 det(\bar{V}^{t} A \bar{A}^t V): V\in M(n\times 2,\Bbb{C}) \textrm{ and } \bar{V}^{t}V=Id_{2}
 \}.
 \]
 \end{lemma}

\begin{lemma}\label{l:kyfan} Let  $M\in SL(n+1,\Bbb{C})$ be a Jordan block of size  $n+1\times n+1$. If 1 is the unique proper value of  $M$, then:

 \begin{enumerate}
 \item \label{i:1ky} There are  $0<s<r$ such that for every $m\in \Bbb{N}$ we get:
 \[
s 
\left (
 \begin{matrix}
 m\\
 n
 \end{matrix}
\right )
<
 \vert M^m\vert_{1,1}
 <
 r
  \left (
 \begin{matrix}
 m\\
 n
 \end{matrix}
\right ).
 \]
 
 \item \label{i:2ky}  Also we have:
 \[
  \frac
 {
  \vert M^m\vert_{2,1}-  \vert M^m\vert_{1,1}
 }
 {
 m
  }
 \xymatrix{ \ar[r]_{m \rightarrow  \infty}&} 
0.
 \]
 
 \end{enumerate} 
\end{lemma}
\begin{proof}
  An inductive argument shows:
\[
M^m=
\begin{pmatrix}
1 & \begin{pmatrix}m\\ 1\end{pmatrix} & \begin{pmatrix}m\\ 2\end{pmatrix}&
\ldots & \begin{pmatrix}m\\ n\end{pmatrix}\\
0 & 1 &  \begin{pmatrix}m\\ 1\end{pmatrix} & 
\ldots & \begin{pmatrix}m\\ n-1\end{pmatrix}\\
0 & 0 & 1 &   
\ldots & \begin{pmatrix}m\\ n-2\end{pmatrix}\\
 &  &  &   
 \ddots& \vdots\\
 &  &  &   
 & 1
\end{pmatrix}.
\] 
Define $B_m=M^m(\bar{M}^m)^t$,  a simple  computation shows $B_m$ is given by:
\begin{tiny}
\[
\begin{pmatrix}
\sum_{j=0}^n \begin{pmatrix}m\\ j\end{pmatrix}^2 &
\sum_{j=1}^n \begin{pmatrix}m\\ j\end{pmatrix} \begin{pmatrix}m\\ j-1\end{pmatrix} 
& 
\sum_{j=2}^n \begin{pmatrix}m\\ j\end{pmatrix} \begin{pmatrix}m\\ j-2\end{pmatrix} & \ldots
&
\sum_{j=n}^n \begin{pmatrix}m\\ j\end{pmatrix} \begin{pmatrix}m\\ j-n\end{pmatrix}
\\
\sum_{j=1}^n \begin{pmatrix}m\\ j\end{pmatrix} \begin{pmatrix}m\\ j-1\end{pmatrix}  & \sum_{j=0}^{n-1} \begin{pmatrix}m\\ j\end{pmatrix}^2 &  \sum_{j=1}^{n-1} \begin{pmatrix}m\\ j\end{pmatrix}  \begin{pmatrix}m\\ j-1\end{pmatrix}& 
\ldots &\sum_{j=n-1}^{n-1} \begin{pmatrix}m\\ j\end{pmatrix}  \begin{pmatrix}m\\ j-n+1\end{pmatrix}\\
\sum_{j=2}^{n} \begin{pmatrix}m\\ j\end{pmatrix}  \begin{pmatrix}m\\ j-2\end{pmatrix} & \sum_{j=1}^{n-1} \begin{pmatrix}m\\ j\end{pmatrix}  \begin{pmatrix}m\\ j-1\end{pmatrix} & \sum_{j=0}^{n-2} \begin{pmatrix}m\\ j\end{pmatrix}^2 &   
\ldots &\sum_{j=n-2}^{n-2} \begin{pmatrix}m\\ j\end{pmatrix}  \begin{pmatrix}m\\ j-n+2\end{pmatrix}\\
 \vdots&\vdots  &\vdots  &   
 \ddots& \vdots\\
\sum_{j=n}^{n} \begin{pmatrix}m\\ j\end{pmatrix}  \begin{pmatrix}m\\ j-n\end{pmatrix}&  \sum_{j=n-1}^{n-1} \begin{pmatrix}m\\ j\end{pmatrix}  \begin{pmatrix}m\\ j-n+1\end{pmatrix} & \sum_{j=n-2}^{n-2} \begin{pmatrix}m\\ j\end{pmatrix}  \begin{pmatrix}m\\ j-n+2\end{pmatrix} &   
 & \sum_{j=0}^0 \begin{pmatrix}m\\ j\end{pmatrix}^2 
\end{pmatrix}
\] 
\end{tiny}
 Let us show  (\ref{i:1ky}).  From the previous equation we deduce:
 
 \[
\left [\left [
\begin{pmatrix}m\\ n\end{pmatrix}^{-2}B_m 
\right ]\right ]
\xymatrix{ \ar[r]_{m \rightarrow  \infty}&} 
\left [ \left [
\begin{matrix}
1 & 0& 0 &
\ldots & 0\\
0 & 0 & 0& 
\ldots & 0\\
0 & 0 & 0 &   
\ldots &0\\
 \vdots &\vdots  &\vdots  &   
 \ddots& \vdots\\
 0 & 0 &0  &   
 \ldots& 0
\end{matrix}
\right ] \right ]
\]
Thus  there are $0<s<r$ satisfying:
 \[
s 
 \begin{pmatrix}
 m\\
 n
 \end{pmatrix}^2
<
\vert B_m\vert_{1,1} 
=
 \vert M^m\vert_{1,1}^2
 <
 r
 \begin{pmatrix}
 m\\
 n
 \end{pmatrix}^2, 
 \]
which concludes this part of the proof.\\ 
 
In order to conclude let us show  (\ref{i:2ky}). By Lemma \ref{l:dym}, there are   $V_m\in SL(n\times 2,\Bbb{C})$ such that:
 
  \[
  k_m=\vert M^m\vert_{1,1}^2(\vert M^m\vert_{2,1}^2-\vert M^m\vert_{1,1})^2=
  det (\bar{V}^t A^m\bar{A^m}^tV_m)
  \] 
  A straightforward calculation shows:
 \[
 k_m
 \left (
 m
 \begin{pmatrix}
 m\\
 n
 \end{pmatrix}
\right  )^{-2}=
det 
\left (\bar{V}^t 
\left (
 m
 \begin{pmatrix}
 m\\
 n
 \end{pmatrix}
\right  )^{-2}
 B_mV_m
 \right ) \xymatrix{ \ar[r]_{m \rightarrow  \infty}&} 
0, 
 \]
which concludes the proof.
\end{proof}

\begin{lemma} \label{l:par22a} 
	Let     $A\in GL(k,\Bbb{C})$ and $B\in GL(l,\Bbb{C})$ be Jordan blocks such that $1$ is its proper value  and $k>l$. If     $\widetilde \gamma\in GL(k+l,\C)$ is given by: 
	\[
	\widetilde \gamma=
	\left ( 
	\begin{array}{ll}
	A & 0\\
	0 &  B\\
	\end{array}
	\right ), 
	\]
	and $\gamma=[[\widetilde\gamma]]$, then:
	\[
	\begin{array}{l}
	\Lambda_{Kul}(\langle  \gamma \rangle)=
	\langle \langle[e_1],\ldots,[e_{l_k-1}], [e_{k+1}],\ldots,[e_{k+l-1}]\rangle \rangle\\
	\end{array}
		\] 
	
\end{lemma}
\begin{proof}
By Lemma \ref{l:parwed} and results in  \cite{neretin} there are  $\nu_1\in SL(k,\mathbb{C}), \nu_2\in SL(l,\mathbb{C})$,   Hermitian  matrices $H_1\in GL(k,\C), H_2\in GL(l,\C)$ and  $(u_{1m}),(v_{1m})\in U(k)$, $(u_{2m}),(v_{2m})\in U(l)$   such that: The signatures  of $H_1$ and $H_2$  are respectively  $(floor(\frac{k}{2}),ceiling(\frac{k}{2})), (floor(\frac{l}{2}),ceiling(\frac{l}{2}))$, we know  
$\widehat{A}= \nu_1 A \nu_1^{-1}\in U(H_1)\, \widehat{B}= \nu_1 B\nu_1^{-1}\in U(H_2)$ and 

%
%

\begin{equation} \label{Cartan}
\begin{array}{l}
\widehat{A}^{m}=  \hat{u}^{1m} Diag(\alpha_{1m},\cdots,\alpha_{km})\hat{v}_{1m}\\                                                           
\widehat{B}^{m}=\hat{u}_{2m }Diag(\beta_{1m},\cdots,\cdots,\beta_{lm})\hat{v}_{2m}
\end{array}
\end{equation}

where  $\alpha_i>\alpha_{i+1}$, $\beta_i>\beta_{i+1}$ and 
\[
 \begin{array}{l}
 \alpha_{floor(k/2)-i}\alpha_{ceiling(k/2)+1+i}=1 \textrm{ for } 0\leq i\leq floor(k/2)-1\\
 \beta_{floor(l/2)-i}\beta_{ceiling(l/2)+1+i}=1  \textrm{ for } 0\leq i\leq floor(k/2)-1\\
\end{array}
\]

Since the dynamics of $A,B$ and $\hat{A},\hat{B}$ are conjugate, from Lemma \ref{l:kyfan} we  get
\begin{equation}
\begin{array}{l}
\frac{\alpha_{1m}}
{\beta_{1m}}  \xymatrix{ \ar[r]_{m \rightarrow  \infty}&}\infty\\
\frac{\beta_{1m}}
	{max\{\alpha_{im}, \beta_{im}:i>2\}} \xymatrix{ \ar[r]_{m \rightarrow  \infty}&}  \infty\\
\end{array}
\end{equation}
 
Up to  conjugation,  we can assume that: 
\begin{equation}
\label{reorderedPolar}
{\gamma}^m=\hat{u}_m\begin{pmatrix}
D_m & & \\
& \beta_{1m}^{-1} & \\
& & \alpha_{1m}^{-1}
\end{pmatrix} \hat{v}_m
\end{equation}
where $\hat{u}_m$ and $\hat{v}_m$ are unitary matrices and  $$D_m=Diag(\alpha_{1m},\cdots,\alpha_{k-1,m},\beta_{1m},\cdots,\beta_{l-1,m}),$$. 
Let's denoted by $\rho_{im}$ the elements of $D_m$,  define
\[
\begin{array}{l}
\kappa_{im}=min\{\rho^{-1}_{im}\rho_{jm}:   j=1,\ldots, k+l-2 \}  \\
\end{array}
\] 
Let $(n_m)\subset \Bbb{N}$ be any  strictly  increasing sequence such that 
 $\hat{v}_m \xymatrix{ \ar[r]_{m \rightarrow  \infty}&} \hat{v}\in U(k+l)$,  $\hat{u}_m \xymatrix{ \ar[r]_{m \rightarrow  \infty}&}\hat{u}\in U(k+l)$ and $\kappa_{im}  \xymatrix{ \ar[r]_{m \rightarrow  \infty}&}\kappa_i \in \R$
  then
   \[
   \begin{array}{l}
   \hat{u}(\langle\langle{e}_1,\cdots,{e}_s\rangle\rangle)=\langle\langle {e}_1,\cdots,{e}_s\rangle\rangle,\\
    \hat{v}^{-1}(\langle\langle{e}_{s+1},{e}_{s+2}\rangle\rangle)=\langle\langle {e}_{s+1},{e}_{s+2}\rangle\rangle,\\
  \end{array}
    \]
  Let $x=[x_1,\ldots, x_{k+l-2},0,0]\in \langle\langle e_{1},\ldots, , e_{k+l-2}\rangle \rangle$
  and define
\[
\zeta_m=\hat{v}^{-1}_m([\kappa_{1m} x_1,\cdots,\kappa_{k+l-2,m} x_{k+l-2},1,1])
\]
thus 
\[  
\begin{array}{l}
 \zeta_m \xymatrix{ \ar[r]_{m \rightarrow  \infty}&} \zeta=\hat{v}^{-1}_m([\kappa_{1} x_1,\cdots,\kappa_{k+l-2} x_{k+l-2},1,1])\in \P^{n}\setminus \langle\langle {e}_{s+1},{e}_{s+2}\rangle\rangle,\\
 \gamma^{n_m}\zeta_m\rightarrow \hat {u} x.
 \end{array}
 \] 
    This shows  the assertion.
\end{proof}

The following is an easy consequence of Lemmas \ref{l:par11},  \ref{l:par22} and  \ref{l:par22a}. 

\begin{lemma} \label{l:par22}
	Let $\gamma\in PSL(n+1,\C)$ be a parabolic  element, 
	$\widetilde \gamma\in SL(n+1,\C)$ be a lift of $\gamma$ and   $(k, \{V_j\}_{j=1}^k, \{\beta_j\}_{j=1}^k, \{\lambda_j\}_{j=1}^k, \{\gamma_j\}_{j=1}^k)$ be a  block decomposition for $\widetilde \gamma$. If $k\geq 2$ and  for each $j\in \{1,\ldots, k\}$ it is verified that 
	$\gamma_j$ is a $l_j\times l_j$ Jordan block with proper value 1, then 
	\[
	\Omega_{Kul}(\langle \gamma\rangle)=\Bbb{P}^n_{\Bbb{C}}\setminus\left \langle \left \langle V_1\cup\bigcup_{j=2}^k \Lambda_{Kul}(\langle\gamma_j\rangle )\right \rangle \right \rangle.
	\] 
\end{lemma}

Now the following are   straightforward Corollaries whose proof we omit here.

\begin{corollary}\label{c:parmax}
	Let $\gamma\in PSL(n+1,\Bbb{C})$ be a parabolic element, $\widetilde \gamma\in SL(n+1,\Bbb{C})$ a lift  of $\gamma$ and $( \{V_j\}_{j=1}^k,\{\beta_j\}_{j=1}^k, \{T_j\}_{j=1}^k)$ a block decomposition of $\widetilde{\gamma}$, then
	\begin{enumerate}
\item The largest open set $\Omega_\gamma$ on which $\langle \gamma\rangle$ acts properly discontinuously is 
\[
\Omega_\gamma= \Bbb{P}^n_{\Bbb{C}}\setminus
 \langle\langle V_1\cup \bigcup_{j=2}^k\{v_{j1},\ldots,v_{j,ceiling(dim v_j/2)} \} \rangle \rangle 
\]
\item We have $Eq(\langle\gamma\rangle)= \Omega_{Kul} (\langle\gamma\rangle)$ if and only if for each $j,m>2$  we have $dim V_j=dim V_m$.
\item We have $ \Omega_{Kul} (\langle\gamma\rangle)$ is the largest open set on which  $\langle\gamma\rangle $ acts properly discontinuously  if and only if for each $j>2$  we have $dim V_j=2,3$.
\item We have $ Eq(\langle\gamma\rangle)=\Omega_{Kul} (\langle\gamma\rangle)$ is the Largest open set on which  $\langle\gamma\rangle $ acts properly discontinuously  if and only if 
for each $j,m>2$  we have $dim V_j=dim V_m$ and $dim V_2=2,3$.

		\end{enumerate}
	
\end{corollary}

\begin{corollary}\label{c:parmax}
	Let $\gamma\in PSL(n+1,\Bbb{C})$ be a parabolic element, $\widetilde \gamma\in SL(n+1,\Bbb{C})$ a lift  of $\gamma$ and $( \{V_j\}_{j=1}^m,\{\beta_j\}_{j=1}^m, \{T_j\}_{j=1}^m)$ a block decomposition of $\widetilde{\gamma}$. If $\gamma$ is conjugate to an element in $PU(k,l)$, then:
	\[
\sum_{j=2}^m ceiling((dim v_j)/2)	\leq k.
	\]
	
\end{corollary}

As we are going to see the  results developed in this section will provide a good understanding of the dynamic  for loxodromic  elements.

\section{Loxodromic Transformations} \label{s:lox}

Recall that a loxodromic element $\gamma$ in $\PSL(2	,\Bbb{C})$ by definition has two fixed points in $p,q\in\P^1$. One of these points is repelling, the other attracting. Due to this fact 
one  can always choose a small enough ball $W$ with center at the attracting point 
such that $\gamma ( \overline W)  \subset W$. We will  see bellow that this property  characterizes the loxodromic elements.  The following technical lemmas will help in this  characterization of the loxodromic elements.

\begin{lemma}\label{l:t1}
	Let  $\gamma \in  {\rm PSL} (n+1 , \mathbb C)$ be an element for which   there is a proper    open set   $W$  in $\P^{n} $ such that    $\gamma ( \overline W ) \subset W$, then
	$ \Lambda(\langle\gamma\rangle )$ is a non-empty disconnected set.
\end{lemma}

\begin{proof}  Define 
	$$\Omega =\bigcup_{n\in \Bbb{Z}}  \gamma^n(\overline{W}\setminus \gamma(\overline{W})).$$
	Then $\Omega $ is a non-empty set where $\langle \gamma \rangle $ acts properly discontinuously.   In consequence 
	$$
	\Lambda(\langle \gamma \rangle)\subset
	W 
	\cup
	\P^n \setminus \overline  W.
	$$
	To conclude observe that   $ \Lambda(\gamma)\cap W\neq \emptyset$ and $\Lambda(\gamma)\cap \P^n\setminus \overline W\neq \emptyset$.
\end{proof}



  \begin{lemma}[See   \cite{shaw}] \label{l:t2}
	Let $T \in SL(n+1,\Bbb{C})$  and $1 \leq k \leq n+1$. If $\alpha_1,\ldots, \alpha_{n+1} $ are the  eigenvalues of $T$, then the eigenvalues of $\bigwedge_ k T$
	has the form $\alpha_{j_1}\cdots \alpha_{j_k }$ where $j_1,\ldots , j_k\in \{1,\ldots, n+1\}$ and $j_l<j_k$, whenever $k<l$.
\end{lemma}

\begin{lemma} \label{l:t3}
	Let $T \in SL(n+1,\Bbb{C})$ and  $\alpha_1,\ldots, \alpha_{n+1} $ be  the eigenvalues of $T$. If $p\in \Bbb{C}^{n+1}$ is a  eigenvector for  $\alpha_1$, $\vert \alpha_1\vert=\max \{\vert \alpha_j\vert:j \in \{1,\ldots, n+1\}\} $ and  $\vert \alpha_1\vert \neq \vert \alpha _k\vert  $ for $k>2$, then $[p]$ is an attracting fixed point for the action  of $ [[T]]$ in $\Bbb{P}^n_{\Bbb{C}}$
\end{lemma}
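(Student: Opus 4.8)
## Proof proposal for Lemma \ref{l:t3}

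The plan is to pass to a suitable basis, isolate the coordinate direction of $p$, and estimate the remaining coordinates. First I would normalize: since $[p]$ is manifestly a fixed point of $[[T]]$, it suffices to show it is attracting, i.e. that there is an open neighborhood $U$ of $[p]$ with $[[T]]^m(z) \to [p]$ for every $z\in U$. Choose a basis of $\C^{n+1}$ adapted to the generalized eigenspace decomposition of $T$, with $p = e_1$, so that $T$ has the block form $\mathrm{diag}(\alpha_1, T')$ where $T'\in GL(n,\C)$ collects the remaining Jordan blocks and, by hypothesis, every eigenvalue $\alpha_k$ of $T'$ satisfies $|\alpha_k| < |\alpha_1|$ (the hypothesis $|\alpha_1|\neq|\alpha_k|$ for $k\geq 2$ together with $|\alpha_1|$ being the maximum modulus forces strict inequality).

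The key step is then to consider the rescaled operator $S := \alpha_1^{-1} T'$ acting on the complement $\langle\langle [e_2],\ldots,[e_{n+1}]\rangle\rangle \cong \C^n$. Its spectral radius is $\rho(S) = |\alpha_1|^{-1}\max_{k\geq 2}|\alpha_k| < 1$, so Lemma \ref{l:radio} applies and gives $S^m \to 0$ pointwise on $\C^n$. Now write an arbitrary point near $[e_1]$ in the affine chart $\{[1 : w] : w\in\C^n\}$; its image under $[[T]]^m$ is $[\alpha_1^m : (T')^m w] = [1 : \alpha_1^{-m}(T')^m w] = [1 : S^m w]$, which converges to $[1:0] = [e_1] = [p]$ as $m\to\infty$. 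This already shows pointwise convergence on the whole affine chart; to get convergence of $[[T]]^m$ to the constant map $[p]$ uniformly on compact subsets of that chart (which is the natural meaning of "attracting"), I would note that on a compact set the $w$ range over a bounded set, and since $S^m\to 0$ in the operator norm as well (again because $\rho(S)<1$, by the same Lemma \ref{l:radio} applied coordinatewise, or by Gelfand's formula), the convergence $[1:S^m w]\to[1:0]$ is uniform there.

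The main obstacle is essentially bookkeeping rather than a genuine difficulty: one must be careful that $T'$ need not be diagonalizable — it may contain nontrivial Jordan blocks for the $\alpha_k$ — so the decay of $(T')^m w$ is not simply geometric but of the form $m^j|\alpha_k|^m$; this is exactly why invoking Lemma \ref{l:radio} (which handles arbitrary linear maps of spectral radius $<1$, not just diagonalizable ones) is the right move rather than a naive coordinatewise estimate. A secondary point to handle cleanly is the passage from "pointwise limit on an affine chart" to "attracting fixed point" in the sense used elsewhere in the paper; I would phrase the conclusion as: there is an open neighborhood $U$ of $[p]$ such that $\overline{[[T]](U)}\subset U$ and $\bigcap_{m} [[T]]^m(U) = \{[p]\}$, which is immediate once the chart computation above is in hand, by taking $U$ to be a small ball around $[1:0]$ in the chart and using that $\|S\|<1$ eventually (or, if $\|S\|\geq 1$ initially, replacing $T$ by a fixed power, which changes nothing about the fixed point being attracting).
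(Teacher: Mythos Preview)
Your proposal is correct and follows essentially the same approach as the paper: both decompose $\C^{n+1}=\langle p\rangle\oplus W$ with $W$ a $T$-invariant complement, pass to the affine chart $w\mapsto[p+w]$ where the induced map is $\alpha_1^{-1}T|_W$ with spectral radius strictly less than $1$, and conclude attraction. The only difference is that the paper phrases the final step via the derivative criterion (the linearization at the fixed point has spectral radius $<1$), while you spell out the convergence explicitly through Lemma~\ref{l:radio} and address uniform convergence --- your version is if anything more detailed.
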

\begin{proof} By the Normal Jordan normal form theorem, there is $v\in \C^{n+1}$ and $W$ a linear subspace such that $\C^{n+1}=\langle v\rangle\oplus W$, $W$ is $T$-invariant, $v$ is  eigenvector with  eigenvalue $\alpha_1$ and  the spectral radius of $T\mid_W$ is less than $ \vert \alpha_1\vert$. Consider the affine chart $ w\in W \mapsto [v+w]$,  in this chart  $[v]$ correspond to the origin and $[[T]]$ is simply $\alpha_1^{-1}T\mid_W$. Since $D(\alpha_1^{-1}T\mid_W)(0)=\alpha_1^{-1}T\mid_W$ and the spectral radius of  $\alpha_1^{-1}T\mid_W$ is less than $1$, we conclude that $[v]$ is an attracting fixed point. 
\end{proof}

\begin{lemma} \label{l:t4}
	Let $U\subset \iota Gr(k,n)$ be an open set (resp. a closed set), then $\bigcup_{\ell\in U}\ell$  is an open set in $\P^n$  (resp. closed).  
\end{lemma}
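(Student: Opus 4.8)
The plan is to realize the union as the image of an open (respectively closed) set under a continuous map coming from the incidence variety. Set
\[
F=\{(\ell,p)\in Gr(k,n)\times\P^n : p\in \ell\},
\]
with projections $\pi_1:F\to Gr(k,n)$ and $\pi_2:F\to\P^n$. First I would observe that $F$ is closed in $Gr(k,n)\times\P^n$: writing $\ell$ via the Pl\"ucker embedding as $[v_1\wedge\cdots\wedge v_{k+1}]$ and $p=[w]$, the incidence $p\in\ell$ is equivalent to $w\wedge v_1\wedge\cdots\wedge v_{k+1}=0$, a closed condition depending continuously on $(\ell,p)$. Since $Gr(k,n)$ and $\P^n$ are compact, so is $F$. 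The basic identity is
\[
\bigcup_{\ell\in U}\ell=\pi_2\bigl(\pi_1^{-1}(U)\bigr).
\]
If $U$ is closed it is compact, hence $\pi_1^{-1}(U)$ is closed in the compact space $F$, hence compact, so its continuous image $\pi_2(\pi_1^{-1}(U))$ is compact and therefore closed in $\P^n$.

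For the open case it suffices to prove that $\pi_2:F\to\P^n$ is an open map, for then $\pi_1^{-1}(U)$ is open in $F$ and so is its image $\bigcup_{\ell\in U}\ell$. Fix $(\ell_0,p_0)\in F$; I would show that the image under $\pi_2$ of any open subset of $F$ containing $(\ell_0,p_0)$ contains a neighborhood of $p_0$, by producing, for each $p\in\P^n$ close enough to $p_0$, a $k$-plane $\ell_p$ close to $\ell_0$ with $p\in\ell_p$. Choose a lift $\langle v_1,\dots,v_{k+1}\rangle$ of $\ell_0$ with $p_0=[v_1]$, and for $p$ near $p_0$ pick a lift $w$ of $p$ near $v_1$ and put $\ell_p=[\langle w,v_2,\dots,v_{k+1}\rangle]$. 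Linear independence of $w,v_2,\dots,v_{k+1}$ is an open condition, so $\ell_p$ is well defined for $p$ in a neighborhood of $p_0$, and continuity of the span map (equivalently, of the Pl\"ucker coordinate $w\wedge v_2\wedge\cdots\wedge v_{k+1}$) gives $\ell_p\to\ell_0$ as $p\to p_0$. Hence $(\ell_p,p)\to(\ell_0,p_0)$ in $F$, so $(\ell_p,p)$ lies in the given open set once $p$ is sufficiently close to $p_0$; thus every such $p$ lies in the image, and $\pi_2$ is open.

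The only delicate point --- and it is a mild one --- is this openness of $\pi_2$: that a $k$-plane through a point can be perturbed to a nearby $k$-plane through any nearby point with controlled position in $Gr(k,n)$. The explicit construction above settles it, using only that linear independence is an open condition and that the projective span of a tuple of independent vectors depends continuously on the tuple; the rest is a routine compactness argument.
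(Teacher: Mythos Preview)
Your argument is correct and is the standard incidence-variety approach: realize the union as $\pi_2(\pi_1^{-1}(U))$ for the flag variety $F\subset Gr(k,n)\times\P^n$, use compactness for the closed case, and show $\pi_2$ is open by exhibiting local continuous sections. The paper proceeds differently. For the open case it fixes a ball $U=B_{\bigwedge^k d}(r,\ell)$ and, for each $k$-tuple $W=\{w_1,\dots,w_k\}$ in general position, defines the continuous function $\nabla_W([z])=\bigwedge^k d\bigl([w_1\wedge\cdots\wedge w_k\wedge z],\iota(\ell)\bigr)$; then $\bigcup_{\ell'\in U}\ell'=\bigcup_W\nabla_W^{-1}([0,r))$ is exhibited directly as a union of open sets. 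For the closed case the paper runs a sequential argument: pick $\ell_m\in U$ through $x_m$, pass to a subsequence converging in $Gr(k,n)$ to some $\ell_0\in U$, and use that Pl\"ucker convergence agrees with Hausdorff convergence to conclude $x\in\ell_0$. Your route is more conceptual and transparently reusable (the openness of $\pi_2$ is equivalent to $\pi_2$ being a fiber bundle, which you essentially verify); the paper's argument is more hands-on and ties in explicitly with the metric structure on $Gr(k,n)$ introduced earlier in the text.
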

\begin{proof}Let us show the case when $U$ is an open set. Clearly, is enough to  assume that  $U$ is an open ball in $\iota Gr(k,n)$. Let us assume that $U=B_{ d}(r,\ell)$, where $d$ is the Fubini-Study metric restricted to $\iota Gr(k,n)$. Let $v_1,\ldots, v_{k+1}\in \C^n\setminus\{0\}  $ be points in general position such that $[Span( v_1,\ldots, v_{k+1}) \setminus\{0\} ]=\ell$, for each set $W=\{w_1,\ldots, w_k\}\subset  \C^{n}$ of points in general position, consider the following  function $\nabla_W:\P^n\setminus [\langle W\rangle\setminus \{0\} ]\rightarrow \Bbb{R}^+$, given by 
	\[
	\nabla_W([z])= d( [w_1\wedge \cdots \wedge w_k\wedge z],[ v_1\wedge \cdots \wedge v_{k+1}])
	\]
	Clearly $\nabla_W$ is a well defined continuous function. To conclude is enough to observe  that 
	$\bigcup_{\ell\in U}\ell=\bigcup_W\{z\in  \P^n\setminus [\langle W\rangle\setminus \{0\} ]: \nabla_W(z)<r\}$. \\
	
	Finally, let us proof the case when $U$ is a closed   set. Let $(x_m)\subset \bigcup_{\ell\in U}\ell$ be a sequence converging to $x$. For each $m$ we can choose an element $\ell_m\in U$ such that $x_m\in \ell_m$. Since $Gr(k,n)$ is compact we can assume that there is $\ell_0\in U$ such that $\ell_m \xymatrix{ \ar[r]_{m \rightarrow  \infty}&} \ell_0 $, in the topology of $Gr(k,n)$. In consequence $\ell_m \xymatrix{ \ar[r]_{m \rightarrow  \infty}&} \ell_0 $ as closed sets in the Hausdorff topology. Therefore $x\in \ell_0$, which concludes the proof. 
\end{proof}

\begin{lemma} \label{l:t5}
	Given $\mathcal{L}\in Gr(k,n)$, there is  an open set $U\subset Gr(k,n)$ such that $\mathcal{L}\in U$ and $\bigcup_{\ell\in U}\ell \neq \P^n$. 
\end{lemma}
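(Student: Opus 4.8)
The plan is to exhibit explicitly, given a $k$-plane $\mathcal{L}$, a point of $\P^n$ that lies on no $k$-plane near $\mathcal{L}$; then the set of $k$-planes missing that point is the desired open neighborhood $U$, and $\bigcup_{\ell\in U}\ell$ omits that point, hence is a proper subset of $\P^n$. Concretely, after applying an element of $PSL(n+1,\C)$ (which acts transitively on $Gr(k,n)$ and preserves the statement) I may assume $\mathcal{L}=\langle\langle[e_1],\ldots,[e_{k+1}]\rangle\rangle$. Pick any point $p\notin\mathcal{L}$, say $p=[e_{k+2}]$ when $k+1<n+1$; since $k<n$ this is always available. The key observation is that ``$\ell$ contains $p$'' is a \emph{closed} condition on $\ell\in Gr(k,n)$: the incidence variety $\{\ell : p\in\ell\}$ is closed, which one sees either directly from the Hausdorff-metric description of $Gr(k,n)$ (a limit of $k$-planes through $p$ is a $k$-plane through $p$, exactly as in the second half of the proof of Lemma \ref{l:t4}) or via the Pl\"ucker embedding, where it is cut out by linear equations. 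Since $\mathcal{L}$ itself does not contain $p$, the complement $U:=\{\ell\in Gr(k,n): p\notin\ell\}$ is an open set containing $\mathcal{L}$.

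It then remains to check $\bigcup_{\ell\in U}\ell\neq\P^n$, and this is immediate: by construction no $\ell\in U$ passes through $p$, so $p\notin\bigcup_{\ell\in U}\ell$. This completes the proof.

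There is essentially no obstacle here; the only point requiring (routine) care is the closedness of the incidence condition $p\in\ell$, and that is precisely the compactness-plus-Hausdorff-convergence argument already carried out in Lemma \ref{l:t4}. One could also note that Lemma \ref{l:t4} itself gives, for free, that $\bigcup_{\ell\in U}\ell$ is open in $\P^n$; but for the present statement only its being proper is needed, and that follows from the explicit choice of $p$.
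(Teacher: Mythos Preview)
Your proof is correct and follows essentially the same approach as the paper: choose something disjoint from $\mathcal{L}$ and take $U$ to be the set of $k$-planes missing it. The paper uses a closed ball $\overline{W}$ disjoint from $\mathcal{L}$ and sets $U=\{\ell:\ell\cap\overline{W}=\emptyset\}$, whereas you use a single point $p\notin\mathcal{L}$; your version is a slight simplification, since a point is the minimal compact set one can remove, and the closedness of the incidence condition $p\in\ell$ is exactly the Hausdorff-convergence argument already in Lemma~\ref{l:t4}.
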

\begin{proof}
	Let $W\subset \P^n\setminus \mathcal{L}$ a non empty open set  such that $\overline W\cap \mathcal{L}=\emptyset$. Define $$\widetilde W=\{\ell\in Gr(k,n): \ell\cap \overline{W}\neq \emptyset \},$$ clearly $\widehat W=Gr(n,k)\setminus \widetilde W$ is an open set  containing $\mathcal{L}$ also  satisfying 
	$W\subset \P^n \setminus\bigcup_{\ell\in \widehat W} \ell $. Which concludes the proof.
\end{proof}

\subsection*{Proof of Theorem \ref{t:l1}}
\begin{proof} 
Let $\gamma\in SL(n+1,\C)$ be a linear transformation with one non-unitary  eigenvalue, then  by the Normal Jordan form we can assume that 
$\gamma$ can be written as 
\[
\gamma=
\left (
\begin{array}{llll}
r_1 A_1 &          \\
        & r_2A_2 &\\
        &        & \ddots &\\
        &        &        & r_kA_k
\end{array}
\right )
\]
where $r_k<r_{k-1}<\ldots< r_1 $ and each matrix $A_k$ has only unitary  eigenvalues. Now let   $\widetilde n=dim A_1$ and $\alpha_1,\ldots, \alpha_{\widetilde n}$ be the  eigenvalues of 
$A_1$. A straightforward calculation shows that  $p=e_1\wedge \cdots\wedge e_{\widetilde n}$ is an eigenvalue of $T=\bigwedge^{\tilde n} \gamma $ with eigenvalue $\alpha=r_1^{\widetilde n}\alpha_1\cdots \alpha_{\widetilde n}$, moreover by Lemma \ref{l:t2} we deduce that  $\alpha$ is a simple root of $Det(T-\lambda I)=0$ and    $r_1^{\tilde n}=\max \{\vert \beta\vert:\beta \textrm{ eigenvalue of }T\} $. By Lemma \ref{l:t3} it yields that $[p]$ is an attracting  fixed point of $[[T]]$ acting on $P(\bigwedge^{\tilde n} \Bbb{C}^n)$. Due to the Plucker embedding  and Lemma \ref{l:t5} we conclude that  there is an open set $U\subset Gr(\widetilde n-1,n )$ such that: $[[\gamma] ](\overline U)\subset U$, 
$ \langle\langle e_1,\ldots, e_{\widetilde n}  \rangle\rangle \in U$ and $\bigcup_{\ell\in U}\ell \neq \P^n$. To conclude observe that Lemma \ref{l:t4} yields that   $W=\bigcup_{\ell\in U}\ell$ is a proper open set which satisfy $[[\gamma]](\overline W)\subset W$. Which concludes this part of the proof. 

The proof now concludes by  Lemmas \ref{l:lila} and \ref{l:t1}.
\end{proof}

\begin{theorem}\label{t:kpar2}
Let  $\gamma\in SL(n+1,\C)$ be a given by:
\[
\left (
\begin{array}{llll}
r_1 A_1 &          \\
        & r_2A_2 &\\
        &        & \ddots &\\
        &        &        & r_kA_k
\end{array}
\right )
\]
where  $0<r_k<r_{k-1}<\ldots< r_1 $ and each  $A_j$ has only unitary eigenvectors, also let  
\[
W_1=
\left \{
\begin{matrix}
\langle \langle \Lambda_{Kul} (\langle A_1\rangle )\cup \{ e_{dim A_1+1},\ldots, e_{n+1}\} \rangle \rangle & \textrm{ if } A_1 \textrm{ is non-diagonalizable.}\\
\langle \langle e_{dim A_1+1},\ldots, e_{n+1} \rangle \rangle & \textrm{ if } A_1 \textrm{ is diagonalizable}
\end{matrix}
\right.
\]
and
\[
W_k=
\left \{
\begin{matrix}
\langle \langle \Lambda_{Kul} (\langle A_k\rangle )\cup \{  e_{dim 1},\ldots, e_{\sum_{j=1}^{k-1} dim(A_j)}\} \rangle \rangle & \textrm{ if } A_k \textrm{ non- diagonalizable.}\\
\langle \langle e_{dim 1},\ldots, e_{\sum_{j=1}^{k-1} dim(A_j)} \rangle \rangle & \textrm{ if } A_k \textrm{ is diagonalizable}
\end{matrix}
\right.
\]
Thus  $\Lambda_{Kul}(\langle \gamma\rangle)=W_1\cup W_k$.
\end{theorem}
\begin{proof}
	The following proof is a  straightforward application  of Lemmas  \ref{l:par11},  \ref{l:par22} and  \ref{l:par22a}. 
	\end{proof}
As in the two dimensional case,  see \cite{kulkarni}, the Kulkarni's discontinuity region is not the largest open set where the cyclic   groups act properly discontinuously, the following  example depicts this situation.

\begin{example} \label{e:loxmax}
Let $\gamma\in PSL(n+1,\C)$ be a loxodromic element such that there is     $\widetilde \gamma\in SL(n+1,\C)$ a lift of $\gamma$ and  $(k,\{V_i\}_{i=1}^k,\{\gamma_i\}_{i=1}^k,\{r_i\}_{i=1}^k)$  an unitary decomposition of $\widetilde \gamma$ such that $k>3$, then:
\[
\Omega_1=\P^n\setminus \left ([V_1\setminus \{0\}]\cup \left [\langle\langle \bigcup_{j>2} V_j \rangle\rangle \setminus \{0\}\right ] \right)
\]
\[
\Omega_2=\P^n\setminus 
\left(
[
V_k\setminus \{0 \}  ]\cup \left [\langle\langle \bigcup_{j<k} V_j \rangle\rangle \setminus \{0\}\right ] \right )
\]
are maximal discontinuity regions. 
\end{example}

 In the two dimensional setting is well known that loxodromic transformation have at most 2 maximal open sets on which the respective cyclic group acts properly discontinuously, in the higher dimensional case this situation can be more complicated as the following example shows.
  
\begin{example}
Let us consider the element $\gamma\in PU(1,n)$ induced by the  following matrix
\[
\widetilde\gamma
=
\left (
\begin{array}{lll}
2 &             &\\
& I_{n-1} &\\
&             &2^{-1}
\end{array}
\right )
\]
and $\Gamma=\langle\langle \gamma\rangle \rangle$.
It is not hard to show that 
$$Eq(\Gamma)=\Bbb{P}^n\setminus (\{e_1\}^\bot\cup \{e_{n+1}\}^\bot )=\Omega_{Kul}(\Gamma).$$
On the other hand,  when one tries to determine if the previous sets are  maximal  open sets on which  $\Gamma$ acts properly 
discontinuously, we find the following phenomena: given $U,W\subset Span(\{e_2,\ldots, e_n\})=\mathcal{L}$ disjoint open sets such 
that $\overline U\cup \overline W=\mathcal{L}$, define 
\[
\begin{array}{l}
\mathcal{U}=\{\overleftrightarrow{e_1,u}\mid u\in \overline U\}\\
\mathcal{V}=\{\overleftrightarrow{e_{n+1},v}\mid v\in \overline V\}.\\
\end{array}
\]
Is not hard to show $\Bbb{P}^n\setminus(\mathcal{U}\cup \mathcal{V})$ is a maximal open set on which $\Gamma$ acts properly discontinuously and every maximal open set for the action of $\Gamma$ arises in this way.
\end{example}

 From the previous section one expect to get examples where the Kulkarni and the equicontinuity  agree  and others where they  disagree. This situation is showed by the following  examples.
 
\begin{example}\label{e:ekdl1}
Consider first  the matrix $S$:
\begin{displaymath}
S=
\left (
\begin{array}{ll}
rB& 0\\
0 & r^{-1}B \\
\end{array}
\right )
\end{displaymath}
where $r>1$ and $B$ is the $k\times k$  identity.
Then one can check
  $Eq(\langle S \rangle)=\Omega_{\rm Kul} (\langle S \rangle)$.
  \end{example}
  \begin{example} \label{e:ekdl2}
   Let 
\begin{displaymath}
C=
\left (
\begin{array}{cccc}
rB\\
        & rD\\
        &     &r^{-1}D \\
        &     &             & r^{-1}B \\
\end{array}
\right )\;,
\end{displaymath}
where $r>1$,  $B$ is a $k+1\times k+1$-Jordan block and  $D$ is a $k\times k$-Jordan block.
Is clear that  $\Omega_{\rm Kul} (\langle C \rangle)\neq Eq(\langle C \rangle).$
\end{example}

\subsection{ Existence of Loxodromic elements} 
Loxodromic elements will play an important role in the dynamic of kleinian groups, in this final part of the article we are going to show that in the case of strongly irreducible groups we can ensure the existence of loxodromic elements.

\begin{lemma} \label{l:eloxy}
	Let $(\gamma_m)\subset PSL(n+1,\Bbb{C})$ and $\gamma\in PS(n+1,\Bbb{C})\setminus PSL(n+1,\Bbb{C})$ such that $\gamma_m  \xymatrix{ \ar[r]_{m \rightarrow  \infty}&}\gamma$ as pseudo projective transformations. If $Im(\gamma)\cap Ker(\gamma)=\emptyset$, then for $m$ large we get $\gamma_m$ is loxodromic.  
\end{lemma}
\begin{proof}
	We have that $\gamma_m \xymatrix{ \ar[r]_{m \rightarrow  \infty}&}\gamma$ uniformly on compact sets of $\Bbb{P}^n_{\Bbb{C}}\setminus Ker(\gamma)$. Let $W$ be an open neighborhood  of $Im(\gamma)$ such that $\overline W\subset \Bbb{P}^n_{\Bbb{C}}\setminus Ker(\gamma)$, then for $m$ large  we get $\gamma_m(\overline W)\subset W$, that is $\gamma_m$ is loxodromic.
\end{proof}

\begin{lemma} \label{l:p}
	Let $(M_m)_{m\in \N}, (N_m)_{m\in \N}\subset M(n,\C) $ be sequences of matrices such that $M_m \xymatrix{ \ar[r]_{m \rightarrow  \infty}&} M$ and $N_m\xymatrix{ \ar[r]_{m \rightarrow  \infty}&}
	N$  point-wise. If $Im (N) \nsubseteq Ker( M)$, then $[M_mN_m]\xymatrix{ \ar[r]_{m \rightarrow  \infty}&}
	[MN]$ in the sense of pseudo-projective transformations.
\end{lemma}
\begin{proof}
	By the continuity of the  matrices product we deduce that  $M_mN_m\xymatrix{ \ar[r]_{m \rightarrow  \infty}&}
	MN$, therefore  in order to conclude the proof we need to show that $MN\neq 0$, which is trivial since $Im(N)\nsubseteq Ker(M)$.
\end{proof}

From linear algebra we know that for every linear transformation $T:\C^n\rightarrow \C^n$ and every $1\leq k\leq n$ we got a linear transformation $\bigwedge_k T:\bigwedge^k_{j=1}\C^n\rightarrow \bigwedge^k_{j=1}\C^n$ which is induced by $\bigwedge_k T(v_1\wedge \cdots \wedge v_k)= T(v_1)\wedge \cdots\wedge T( v_k) $, so if we have a subgroup $\Gamma\subset PSL(n+1)$, we can define

\[
\bigwedge_k \Gamma=\left\{\left [\left  [\bigwedge_k \gamma\right]\right ]: [[\gamma]]\in \Gamma\right \}
\] .

\begin{lemma}\label{l:w}
	Let $(M_m)_{m\in \N}\subset M(n,\C) $ be  a sequence of matrices such that $M_m \xymatrix{ \ar[r]_{m \rightarrow  \infty}&} M$  point-wise.
	\begin{enumerate}
		\item\label{i:lw1} If $dim (Im (M) )\geq k$, then $\left [\bigwedge^k_{j=1}M_m\right ]\xymatrix{ \ar[r]_{m \rightarrow  \infty}&}
		\left [\bigwedge^k_{j=1}M \right ]$ in the sense of pseudo-projective transformations.
		\item \label{i:lw2} If $dim (Im (M) )= k$, then: 
		\[
		Im\left (	\left [\bigwedge^k_{j=1}M \right ]\right )=\iota_{k-1,n-1}(Im(M)).
		\]
		Moreover $$Im \left (\left [ \left [\bigwedge^k_{j=1}M \right ]\right] \right )\subset Ker \left (\left [ \left [\bigwedge^k_{j=1}M \right ]\right] \right )$$ if and only if $Ker([M])\cap Im([M])\neq \emptyset$.
	\end{enumerate}
\end{lemma}
\begin{proof} Let us show part (\ref{i:lw1}).
	By the continuity of wedge product we have 
	$$\bigwedge^k_{j=1} M_m\xymatrix{ \ar[r]_{m \rightarrow  \infty}&}\bigwedge^k_{j=1} M,$$ therefore  in order to conclude the proof we need to show that 
	$\bigwedge^k_{j=1} M\neq 0$. 
	Let  $v_1, \ldots, v_k \in Im(M)$ be linearly independent vectors and 
	$\widetilde v_1,\ldots, \widetilde v_k$ be such that 
	$M(\widetilde v_i)= v_i$, then 
	\[
	M_m( \widetilde v_1 \wedge \cdots \wedge \widetilde v_k )\xymatrix{ \ar[r]_{m \rightarrow  \infty}&}
	M( \widetilde v_1 \wedge \cdots \wedge \widetilde v_k )= v_1 \wedge \cdots \wedge v_k  \neq 0,
	\]
	which concludes the proof.\\
	
	The proof of part (\ref{i:lw2}) goes as follows: let $W\in Gr(k-1,n-1)$, then we can choose $w_1,\ldots, w_k\in W$  so $\iota_{k-1,n-1}( W)=[w_1\wedge\cdots\wedge  w_k]$, in consequence 
	
	\[
	\left	[\left [\bigwedge_k M \right ]\right ]
	[w_1\wedge\cdots\wedge w_k]=
	\left \{
	\begin{matrix}	
	\iota_{k-1,n-1}(Im(M)) & \textrm{ if } W\cap Ker (M)=\emptyset \\
	0 & \textrm{ in other case}
	\end{matrix}
	\right.
	\]
	which concludes the proof.
\end{proof}


\begin{lemma} \label{l:hypg}
	Let $\mathcal{L}\subset Gr(n-1,n)$ be an infinite  collection  whose elements are in general position and $\mu:\mathcal{L}\rightarrow \Bbb{P}^{n}_{\Bbb{C}}$ be such that $\mu(\ell)\in \ell$  for every $\ell\in \mathcal{L}$, then there are $\mathcal{K}_1,\mathcal{K}_2\in \mathcal{L}$  such that  
	$\mu( \mathcal{K}_2)\notin \mathcal{K}_1 $ and $\mu( \mathcal{K}_1)\notin \mathcal{K}_2$.
\end{lemma}
\begin{proof} The proof is going to be by induction on $n$. For $n=2$ the proof is worked out in \cite{BCN1}. Now let us assume that the result is valid for $n_0$ but is not longer true for $n_0+1$. Then there    $\mathcal{L}\subset Gr(n,n+1)$ and $\mu:\mathcal{L}\rightarrow{P}^{n+1}_\Bbb{C}$as in the hypothesis of this result such that  for every  $\mathcal{K}_1,\mathcal{K}_2\in \mathcal{L}$  we have   either 
	$\mu( \mathcal{K}_2)\in \mathcal{K}_1 $ or $\mu( \mathcal{K}_1)\in \mathcal{K}_2$.
	Since the elements of  $\mathcal{L}$ are in general position we can assume that $\mu$ is injective. Let $\mathcal{K}_0\in \mathcal{L}$ be a fixed, then there exist $\mathcal{L}^\prime\subset \mathcal{L}$ an infinite subset such that: $$\mu(\mathcal{K}_0)\notin \bigcup \mathcal{L}^\prime.$$ 
	Thus  
	$\mathcal{L}^\prime \cap \mathcal{K}_0=\{\ell\cap \mathcal{K}_0:\ell\in \mathcal{L}^\prime\}$ is an infinite set of projective subspaces with  co dimension one on  $\mathcal{K}_0$ and  lying  in general position. Moreover, if  $$\mu\prime(\ell\cap \mathcal{K}_0)=\mu(\ell)
	\textrm{ for every } \ell\in \mathcal{L}^\prime,$$ we get $\mu\prime(\ell\cap \mathcal{K}_0) \in \mathcal{K}_0\cap\ell$.  By the inductive hypothesis, there are $\mathcal{K}_1,\mathcal{K}_1,\in\mathcal{L}^\prime $ such that 
	\[
	\begin{matrix}
	\mu^\prime (\mathcal{K}_1\cap\mathcal{K}_0)\notin \mathcal{K}_0\cap \mathcal{K}_2\\
	\mu^\prime (\mathcal{K}_2\cap\mathcal{K}_0)\notin \mathcal{K}_0\cap \mathcal{K}_1\\
	\end{matrix}
	\]
	therefore $\mu(\mathcal{K}_1)\notin \mathcal{K}_2$ and $\mu(\mathcal{K}_2)\notin \mathcal{K}_2$, which is a contradiction. This concludes the proof.
\end{proof}
\begin{corollary}
	Let $\Gamma\subset PSL(n+1,\Bbb{C})$ be a discrete strongly irreducible group, then $\Gamma$ contains a loxodromic element.	
\end{corollary}
\begin{proof}  
	Since $\Gamma$ is discrete we conclude that there is $\gamma\in \Gamma$  with infinite order, see \cite{CSN}. Let assume that $\gamma$ is parabolic then there a strictly increasing sequence $(n_m)\subset  \Bbb{N}$ and $\rho_0\in PS(n+1,\Bbb{C})\setminus PSL(n+1,\Bbb{C})$ such that $\rho_m=\gamma^{n_m} \xymatrix{
		\ar[r]_{m \rightarrow \infty}&} \rho_0 $ as pseudo projective transformations and $Im(\rho
	)\subset Ker(\rho)$.  Let $\widetilde \rho_m\in M(n+1,\Bbb{C})$ be a lift of $\rho_m$ such that $\widetilde \rho_m \xymatrix{	\ar[r]_{m \rightarrow \infty}&} \widetilde \rho_0 $ point-wise Set $k=dim(Im(\rho_0))+1$, by Lemma \ref{l:w}, we deduce that $ \bigwedge^k \rho_m=[[\bigwedge ^k\widetilde \rho_m] ]\xymatrix{	\ar[r]_{m \rightarrow \infty}&}\bigwedge^k \rho_0=[[\bigwedge ^k\widetilde \rho_0] ]$ and $Im(\bigwedge^k \rho_0)$ is a single point contained in $Ker(\bigwedge^k \rho_0)$. Since the  action  of $\bigwedge_k \Gamma$ acts strongly irreducible we deduce that there is a sequence of distinct elements   $(\tau_m)\subset \bigwedge_k \Gamma$ such that 
	$\mathcal{L}=\{\tau_m(Ker(\bigwedge^k \rho_0)):m\in \Bbb{N} \}$  is a family of  hyperplanes in general position. By applying Lemma \ref{l:hypg} to $\mathcal{L}$ and $\mu:\mathcal{L}\rightarrow \Bbb{P}(\bigwedge^k \Bbb{C}^{n+1})$ given by 
	$$\mu(\tau_m(Ker(\bigwedge^k \rho_0)))=\tau_m(Im(\bigwedge^k \rho_0)),$$ we deduce that  there are $i_0, j_0\{1,\ldots, dim (\bigwedge ^k \Bbb{C}^{n+1}) \}$ satisfying:
	\[
	\begin{matrix}
	\tau_{i_0}(Im(\bigwedge^k \rho_0))\notin \tau_{j_0}(Ker(\bigwedge^k \rho_0));\\
	\tau_{j_0}(Im(\bigwedge^k \rho_0))\notin \tau_{i_0}(Ker(\bigwedge^k \rho_0)).\\
	\end{matrix}
	\]
	If $\widetilde\tau_{j_0},\widetilde\tau_{i_0}\in End(\bigwedge^k \Bbb{C}^{n+1})$ are  lifts of $\tau_{j_0}$ and $\tau_{i_0}$ respectively, then by Lemma \ref{l:p} we conclude that 
	\begin{scriptsize}
		\[
		\eta_m=\tau_{i_0}^{-1}\tau_{j_0} \left(\bigwedge^k	\rho_m\right )\tau_{j_0}^{-1}\tau_{i_0}\left (\bigwedge^k\rho_m  \right )\xymatrix{
			\ar[r]_{m \rightarrow \infty}&} \eta_0= \left [\left [ \widetilde\tau_{i_0}^{-1}\widetilde \tau_{j_0} \left(\bigwedge^k	\widetilde \rho_0\right )\widetilde \tau_{j_0}^{-1}\widetilde\tau_{i_0}\left (\bigwedge^k\widetilde\rho_0 \right )\right ]	\right ],
		\]
	\end{scriptsize}
	Observe
	\begin{equation}\label{e:kernel}
	Ker( \eta_0)
	=
	\left [
	\left(
	\bigwedge_k\widetilde{\rho_0}
	\right )
	^{-1}
	\left (
	\widetilde\tau_{i_0}^{-1}
	\left (
	\widetilde\tau_{j_0}
	\left (
	Ker
	\left (
	\widetilde{\bigwedge_k \rho_0}
	\right )
	\right )
	\right )
	\right )
	\setminus\{0\}
	\right ]
	\end{equation}
	Since $	Im(\bigwedge^k \rho_0)\notin\tau_{i_0}^{-1} \tau_{j_0}(Ker(\bigwedge^k \rho_0))
	$ by equation \ref{e:kernel} we deduce that 
	\[	
	\begin{matrix}
	Ker( \eta_0)
	=Ker(\bigwedge_k\rho_0)\\ 
	Im(\eta_0)=\tau_{i_0}^{-1} \tau_{j_0}(\bigwedge_k\rho_0)
	\end{matrix}
	\]
	Thus $Im(\eta_0)$ is a point not contained in $Ker(\eta_0)$, by Lemma \ref{l:eloxy} we conclude that for $m$ large 
	$\eta_m$ is loxodromic, if $\eta\in \Gamma$ satisfies that $\bigwedge_k \eta=\eta_m$ we conclude that $\eta$ is loxodromic, which concludes the proof.
\end{proof}

\section{Final toughs}\label{s:main}

 In the case of projective parabolic transformation, our previous discussion shows that for elements in $PU(k,l)$, $k\geq 2$, the Kulkarni's  discontinuity set is not longer the largest open set where the corresponding group acts properly discontinuously. For discrete groups in $PU(k,l)$ it would be interesting to determine if there is a generalization of the notion of the Chen-Greenberg limit set, determine   its limit set in the Kulkarni's sense and if there is an open set which  is the largest open set on which the group acts properly  discontinuously. 
  
 In the one and  two dimensional case and for some transformations in $PU(k,l)$, maps can be classified by the use of the trace,  see \cite{CSN, kris},  it would be useful to have a similar result.

 Let $X$ be   the space,
 of all positive definite, symmetric $3 \times  3$-matrices with real coefficients,
of determinant 1, there is a metric $d$ such that $X$  is a $CAT(0)$-space and  the action of $SL(3,\R)$  on $X$ given by  $fxf^{t}$, where $x\in X$ and $f\in SL(3,\R)$, is by isometries. Then by using the classification of isometries in $CAT(0)$-spaces one can classify elements of $SL(3,\R)$ in to parabolic, loxodromic and elliptic  however is not hard to show, see \cite{fns}, that such classification does no agree with the one induced by definition \ref{d:tipos}.

 From the the theory of $CAT(0)$-spaces one know that isometries can be classified in to three types namely elliptic, parabolic or hyperbolic. In virtue of the similarity of our results with the ones coming from $CAT(0)$-spaces,  is natural to ask if it is possible to use this theory to deal with the classification of projective transformations. We got two naive partial answers: first since $\P^n$ is  compact one cannot use directly the theory of $CAT(0)$-spaces   to deal with the problem of classification of projective transformations, second, a result in \cite{fns} asserts that the fixed set of parabolic elements should be contractible in the Tits boundary of $X$, in consequence for $n>1$, the projective space  $\P^n$ cannot be the tits boundary of a proper $CAT(0)$-space where the projective transformations are extensions of isometries of $X$.

\section*{Acknowledgments}
The authors would like to thank to people the  IMUNAM Unidad Cuernavaca as well as the IIT of UACJ for
 their   hospitality and kindness during the writing of this paper and we are specially grateful to N. Gusevskii,  J. P. Navarrete, John Parker, Jos\'e 
 Seade  and A. Verjovsky  for valuable discussions.

\bibliographystyle{amsplain}

\begin{thebibliography}{10}
 


\bibitem{BCN1}
W. Barrera,  A. Cano, J. P. Navarrete, {\it
On the number of lines in the limit set for discrete subgroups of $PSL(3,\C)$},
Pacific Journal of mathemathics,  
Vol. 281 (2016), No. 1, 17–49.


\bibitem{bha}
R. Bhatia, {\it Perturbation Bounds for Matrix Eigenvalues}, SIAM, 2007.	



\bibitem{CSN}
A. Cano, J. P. Navarrete, J. Seade, {\it
Complex Kleinian Groups},
Volume 303,  Progress in Mathematics - Birkhäuser,	Springer, 2013.





\bibitem{dym}
H. Dym,   {\it Linear Algebra in action}, Vol. 78, Graduate studies in mathematics, AMS, 2006.

\bibitem{kyfan}
Ky Fan, {\it Maximum Properties and Inequalities for the Eigenvalues of Completely Continuous Operators},  Proc Natl Acad Sci U. S. A., Nov. 1951,  Vol 31, Issue 11, pp. 760-766. 
\bibitem{fns}
K. Fujiwara, K. Nagano, T. Shioya, {\it Fixed point sets of parabolic
isometries of CAT(0)-spaces}, Commentarii Mathematici Helvetici
,Volume 81, Issue 2, 2006, pp. 305-335.

\bibitem{goldman}
W. M. Goldman, {\it Complex Hyperbolic Geometry}, Clarendon Press, 1999.





\bibitem{kpp}
 Krishnendu Gongopadhyay,  Shiv Parsad, J. Parker,{\it On the classification of unitary matrices} 
To appear in Osaka Journal of Mathematics.


\bibitem{iwa}
K. Iwasaki, {\it
From Gauss to Painleve: a modern theory of special functions}.
, Vieweg, 1991.


\bibitem{kulkarni}
R. S. Kulkarni,  \textit{Groups with Domains of Discontinuity}, Math. Ann. No 237, pp. 253-272 (1978).




\bibitem{neretin}
Y. A. Neretin, 
{\it Lectures on Gaussian Integral Operators and Classical Groups},
EMS Series of Lectures in Mathematics, 2011.


\bibitem{santos}
F. Santos, B. Scardua, {\it Construction of vector fields and 
Ricatti foliations  associated to groups of projective automorphism}, Conf.  Geom. and  Dyn.,
Volume 14 (2010),  154-166. 	

\bibitem{shaw}
R. Shaw, {\it Multilinear algebra and group representations}, volume 2 of Linear Algebra and Group Representations, Academic Press (1983).



\bibitem{yos}
M. Yoshida, {\it Hyper Geometric Functions, My Love: Modular Interpretations of Configuration Spaces}, Friedrick Vieweg \& Son, 1997. 

\end{thebibliography}

\end{document}